\documentclass[a4paper,11pt, twoside]{article}
\usepackage{amsmath,amsthm}
\usepackage{amssymb,latexsym}
\usepackage{mathrsfs}
\usepackage{enumerate}
\usepackage[colorlinks,citecolor=blue]{hyperref}
\usepackage[numbers,sort&compress]{natbib}
\usepackage{indentfirst}

\DeclareMathAlphabet{\mathpzc}{OT1}{pzc}{m}{it}

\newtheorem{theorem}{Theorem}[section]
\newtheorem{corollary}{Corollary}[section]
\newtheorem{lemma}{Lemma}[section]

\newtheorem{remark}{Remark}[section]

\theoremstyle{definition} \theoremstyle{remark}
\numberwithin{equation}{section}
\allowdisplaybreaks

\date{}

\begin{document}
	
	
	\date{}
	\baselineskip 0.22in
	\title{{\bf Long-time dynamics for time-nonlocal generalized Rayleigh-Stokes equations}}
	
\author{Li Peng$^1$, Lin Deng$^1$, Jia Wei He$^{2,}\thanks{E-mail address:jwhe@gxu.edu.cn}$
		\\[1.8mm]
		\footnotesize  {$^1$Faculty of Mathematics and Computational Science, Xiangtan University, Hunan 411105,  China}\\
		\footnotesize  {$^2$School of Mathematics, Guangxi University, Nanning 530004, China}
	}

	\maketitle
	
	\begin{abstract}
	In this paper, we consider an autonomous semi-dynamical system driven by semilinear time-nonlocal evolution equations, these type equations are used to describe the Rayleigh-Stokes problem for a non-Newtonain fluid to a generalized second grade fluid. We first investigate the global well-posedness of solutions consisting of global Lipschitz condition by a weighted space $\mathcal C$. Utilizing  the topology convergence on compact subsets of $\mathcal C$, we construct a semi-dynamical system that satisfies the semi-group structure. It also is shown that this semi-dynamical system has an attracting set when the vector field function satisfies a dissipativity condition and a local Lipschitz condition. With the asymptotic compactness, we also establish the existence of generalized attractors in $\mathcal C_\alpha$ of subspace of $\mathcal C$ the weighted norm. \\[2mm]
		{\bf Keywords:} The generalized Rayleigh-Stokes equations;  Semi-dynamical systems; Generalized attractors.\\[2mm]
		{\bf 2020 MSC:}  35B41, 35R11
	\end{abstract}
	
	\baselineskip 0.25in
	
	\section{Introduction}

It is well known that many significant complex fluids are non-Newtonian and exhibit time-dependent behaviors such as thixotropy and rheopexy \cite{Mahmood,Pandey}. The behavior of non-Newtonian fluids often follows a power-law relationship.
In particular, for a more generalized second-grade fluid, constructing a simple mathematical model that captures the diverse behaviors of non-Newtonian fluids becomes even more challenging. From this physical perspective, the constitutive relationship employed for the second-grade fluid takes the following form:
$$
{\bf T}=-p{\bf I} +\mu {\bf K}_1+\varpi_1{\bf K}_2+\varpi_2{\bf K}_1^2,
$$
where ${\bf I}$ is the identity tensor, ${\bf T}$ is the Cauchy stress tensor, $p$ is the hydrostatic pressure, $\mu\geq0$, $\varpi_1$ and $\varpi_2$ are normal stress
moduli.  ${\bf K}_1,{\bf K}_2$ are the kinematical tensors as
$$
{\bf K}_1=\nabla {\bf V}+(\nabla {\bf V})^{\rm T},\ \ {\bf K}_2=\partial_t {\bf K}_1+{\bf K}_1(\nabla {\bf V})+(\nabla {\bf V})^{\rm T}{\bf K}_1,
$$
where ${\bf V}$ is the velocity, $\nabla$ is the gradient operator and the
superscript $\rm T$ denotes a transpose operation, $\partial_t$ denotes the material time derivative.

For generalized second grade fluids, the constitutive equation of $\bf T$ still fit, however ${\bf K}_2$ should be defined as the form, (see e.g. \cite{Shen2006})
$$
{\bf K}_2=\partial_t^{*,k} {\bf K}_1+{\bf K}_1(\nabla {\bf V})+(\nabla {\bf V})^{\rm T}{\bf K}_1,
$$
where $\partial_t^{*k}$ is the time-nonlocal derivative of type
	$$\partial_t^{*k}v(t)=\frac{ {d}}{ {d}t}\int_0^t k(t-s)v(s) {d}s,~~~t>0,$$
	with nonnegative kernel $k\in L^1_{\mathrm{loc}}(\mathbb{R}^+)$.
Consider a special case of Rayleigh-Stokes problem for a heated edge, occupies the space the first dial of a rectangular edge $(x\geq0, -\infty<y<+\infty,z\geq0)$. 
The velocity field will be of the form ${\bf V}=u(x,z,t){\bf j},$
where $u$ is the velocity in the $y$ coordinate direction and ${\bf j}$ is the unit vector along $y$-coordinate direction. Note that, in the absence of body forces, the equation governing motion takes the form,
$\rho{D{\bf  V}}/{Dt}=\nabla\cdot {\bf T},$ in which $\rho$ is density of fluid, $D/Dt$ is the material derivative, and the incompressible fluid $\nabla\cdot{\bf V}=0$ implying that
$$
\partial_t u(x,z,t) =(\nu+ \varrho \partial_t^{*,k})( \partial_{xx}+\partial_{zz} ) u(x,z,t),
$$
where $\nu=\mu/\rho$, $\varrho=\varpi_1/\rho$, $\partial_{xx}=\partial^2/\partial x^2$, $\partial_{zz}=\partial^2/\partial z^2$.
	
Inspired by this model, in this paper, we consider the long-time dynamic behaviors for the semilinear generalized Rayleigh-Stokes problem in time nonlocal equations of type
	\begin{equation}\label{eq:1.1}
		\left\{ \begin{aligned}& \partial_tu-(1+\partial_t^{*k})\Delta u=f(u),\quad {\rm for}\ t>0,\\
			&u(0)=u_0,  \end{aligned}\right.
	\end{equation}
	where $\Delta$ is the Laplace operator on $L^p(\mathbb R^d)$, $\partial_t= {\partial}/{\partial t}$, $f$ is a given function to be special later.
	The proposed system is a general model for some problems studied in literature. Indeed, in the case $k$ is a constant, (\ref{eq:1.1}) is of classical diffusion equation, the related dynamic behaviors has been  studied in many works. If $k$ is a regular function, like $k\in C^1(\mathbb{R}^{+})$, then one gets
	$$\partial_tu-(1+k_0)\Delta u-\int_0^tk_1(t-s)\Delta u(s)\mathrm{d}s=f,$$
	with $k_0=k(0)$ and $k_1(t)=k^{\prime}(t)$, clearly it is a nonclassical diffusion equation with memory. Let $k(t)= k_0t^{-\alpha}/\Gamma(\alpha)$ for $k_0>0$, we can see that (\ref{eq:1.1}) is a fractional Rayleigh-Stokes equation, i.e., $	\partial_tu-(1+k_0\partial_t^\alpha)\Delta u=f.$ Here, the fractional derivative $\partial_t^\alpha$ serves to characterize the memory properties of the material.
	
	As an important representative of time nonlocal evolution equations, the Rayleigh-Stokes equation occupies a fundamental position in the mechanics of viscoelastic fluids. It was initially proposed to describe the dynamic behavior of second-grade fluids \cite{Fetecau2009, Shen2006}, and is expressed as fractional Rayleigh-Stokes equation with $k_0=1$. In recent years, remarkable progress has been achieved in both the theoretical and numerical research on this equation, in terms of numerical computation, researchers have formulated a variety of numerical schemes to solve the Rayleigh-Stokes equation \cite{Bazhlekova2015, Bi2018, Chen2013, Chen2008}. In the study of analytical theory, analytical representations of solutions have been derived for the linear case \cite{Shen2006, Khan2009}. Moreover, recent studies have delved  into the regularity of solutions for nonlinear Rayleigh-Stokes equations \cite{He2022,Lan2022, Luc2021, Zhou2021}. Additionally, inverse problems associated with this class of equations have also attracted considerable attention \cite{Luc2019, Ngoc2021, Tuan2019}. More recently, Peng et al. \cite{Peng2026} studied the well-posedness results for a class of logarithmic nonlinearity of problem (\ref{eq:1.1}) with $f(u)\sim u|u|^{\gamma-2}\log|u|$, $\gamma>2$.
	Of more direct relevance to this paper are the well-posedness theory and the dynamic system behavior of time nonlocal differential equations. In the finite-dimensional context, the existence and uniqueness of solutions for fractional differential equations have been investigated \cite{Cong2020, Podlubny1998,Zhou} for example. For results pertaining to nonlocal evolution equations in Hilbert spaces, see e.g. \cite{Ke2020}.
	It is noteworthy that in practical applications, the parameters of the fractional equation and the initial data frequently hinge on material constants, and the determination of these constants is usually only moderately accurate \cite{Diethelm1999}. This naturally gives rise to a key issue: the continuous dependence of solutions on initial conditions and the nonlocal kernel, that is, the stability and structural stability of solutions. Such stability studies are of importance for the practical application of the model, and relevant theories have been established in \cite{Diethelm2002, Ford2013}.

	From the perspective of long-time dynamics, a fundamental question arises: can this class of nonlocal evolution equations generate an autonomous dynamical system? If so, the asymptotic behavior of solutions as $t\to\infty$ could be obtained through attractor theory. However, the work \cite{Cong2017} revealed an important fact: such equations generally cannot generate a dynamical system in the classical sense, see also e.g. \cite{Doan2025}. Nevertheless, subsequent research has shown that they can be linked to a semi-dynamical system \cite{Doan2021}, which provides the possibility of applying infinite-dimensional dynamical systems theory.
	
	In this work, we extend the well-posedness  and stability results for semidynamical systems associated with equations involving Caputo fractional derivatives  \cite{Doan2021,Kloeden2023,Doan2024}to  the generalized Rayleigh-Stokes equation (\ref{eq:1.1}) on  $L^p(\mathbb{R}^d)$. Compared to classical models, the innovation of this paper lies in generalizing the specific fractional derivative $\partial_t^\alpha$ to a nonlocal derivative operator $\partial_t^{*k}$, thereby describing a broader range of memory effects.
	The main contributions of this study are as follows. Firstly, we obtain well-posedness results for the generalized Rayleigh-Stokes equation. Secondly, by constructing appropriate dissipativity conditions under the Lipschitz condition, we establish the global existence and uniqueness of solutions. Furthermore, in infinite-dimensional spaces, we construct a fractional semi-dynamical system and prove the existence of attractors, thereby extending the corresponding results from finite-dimensional to infinite-dimensional settings. Finally, the semi-dynamical system we have constructed can be viewed as an endeavor to investigate the asymptotic behavior of solutions to nonlocal evolution equations using dynamical systems methods, and the introduction of a dynamical systems framework may offer fresh perspectives for the stability theory of nonlocal evolution equations.
	
	Due to the fact that fractional order systems are unable to directly generate classical dynamical systems,
	the following points shed light on the challenges encountered and their corresponding solutions. Firstly, to address the difficulty that classical semigroup theory cannot directly deal with the solvability of nonlocal equations, we embed the solution operator into the framework of semi-dynamical systems. This approach enables us to employ classical tools from dynamical systems theory to obtain the asymptotic properties of solutions. Secondly, when it comes to handling nonlocal operators, we introduce an extended state space, this transformation turns the time convolution into a local operator in space, effectively revealing the corresponding dissipative behavior.
	Thirdly, to overcome the problem of applying the dissipativity condition to the vector field in the integral equation and defining a semigroup to establish the existence of an absorbing set in the space, we utilize a subspace of the space $\mathcal{C}$ along with a weighted norm. This allows us to characterize uniform convergence on bounded intervals, thereby circumventing the aforementioned difficulty. Finally, due to the lack of compactness in infinite dimensional space, the Ascoli-Arzela theorem cannot be directly applied in finite dimensional space. We adopted a combination of finite dimensional approximation and the measure of noncompactness to obtain the compactness of the desired operators, thereby establishing the existence of attractors.

	The rest of the paper is organized as follows. In the next section, by employing the theory of complete positivity and the subordination principle, we derive an explicit expression for the mild solution of problem (\ref{eq:1.1}). In Section 3, using perturbation techniques within a Banach space equipped with an appropriate Bielecki weighted norm, we establish solvability results for the nonlinear problem and demonstrate continuous dependence of solutions on initial data. Furthermore, we show that the Volterra integral equation associated with the autonomous generalized Rayleigh-Stokes equation in $L^p(\mathbb{R}^{d})$ generates a semigroup on the space of continuous functions, which exhibits topological uniform convergence on compact subsets. When restricted to initial functions $\varphi(t) \equiv S(t){u_0}$ for any $u_0 \in  L^p(\mathbb{R}^{d})$, this semigroup corresponds to the semi-dynamical system of the generalized Rayleigh-Stokes equation. In Section 4, we prove that if the vector field $ f $ is dissipative, the semigroup admits a generalized  attractor within the Banach subspace $\mathcal{C}_\alpha$.

	\section{Preliminaries}
	Throughout this paper, let $\mathbb R^+=(0,+\infty)$, and $*$ is the convolution for functions $f,g\in L_{loc}^1(\mathbb R_+)$ such that
	$$
	(f*g)(t)=\int_0^t f(t-s)g(s)ds.
	$$
	
	A kernel is called the type $(\mathcal{PC})$ if the kernel function $k\in L^1_{loc}(\mathbb R^+)$ is nonnegative and nonincreasing, and there exists another kernel $l\in L^1_{loc}(\mathbb R^+)$ such that $k*l=1$ on $\mathbb R^+$, see e.g. \cite{Vergara}, rewriting $(k,l)\in (\mathcal{PC})$. A $C^\infty$ kernel $k:\mathbb R^+\to \mathbb R $ is called completely monotonous if $(-1)^n k^{(n)}(t)\geq0$ for every $n\in\mathbb{N}_0$, $t>0$.  A $C^\infty$ kernel $k:\mathbb R^+\to \mathbb R $ is called a Bernstein function if it is nonnegative and its first derivative $k'(t)$ is completely monotonous. The classes of completely monotonous and Bernstein functions will be denoted by $\mathcal {CMF}$ and $\mathcal{BF}$, respectively. A characterization of completely monotonous kernel is given by Bernstein's theorem, which says that $f\in\mathcal{CMF}$ iff $f$ can be represented  as the Laplace transform of a nonnegative measure.
	A kernel $b\in L_{loc}^1(\mathbb R_+)$ is called complete positive if the solutions $s_\mu(t)$ and $r_\mu(t)$ of the following integral equations
	\begin{equation}\label{solution1}
		s_\mu +\mu b*s_\mu=1,~~~~~
		r_\mu +\mu b*r_\mu=b,
	\end{equation}
	are nonnegative for each $\mu >0.$
	
	Denote the Fourier transform of any function $ f $ by $ \tilde{f} $ and its Laplace transform by $ \hat{f} $.
	It is notice the fact that $ b $ is completely positive implying the function
	$$
	\varphi^b(\lambda) = \frac{1}{\widehat{b}(\lambda)}\in\mathcal{BF}, \quad \lambda > 0,
	$$
	Furthermore, for $ \tau \geq 0 $, define $ \psi_\tau^b(\lambda) = e^{-\tau \varphi^b(\lambda)} $. According to \cite[Proposition 4.5]{Pruss1993}, $ \psi_\tau^b \in \mathcal{CMF} $. Then by Bernstein's theorem, there exists a unique non-decreasing function $ \omega(\cdot,\tau) \in BV(\mathbb{R}_+) $ satisfying $ \omega(0,\tau) = 0 $, left-continuity, such that
	\begin{equation*}
		\widehat{\omega}(\lambda,\tau)=\int_{0}^{\infty}e^{-\lambda\varsigma}\omega(\varsigma,\tau)\:
		\mathrm{d}\varsigma=\frac{\psi_{\tau}^b(\lambda)}{\lambda},\quad\lambda>0,~~~~\tau\in\mathbb R_+.
	\end{equation*}
	It is clear that $w(\cdot,\tau)$ admits the semigroup property
	$$
	\int_0^t w(t-s,\tau_1)dw(s,\tau_2)=w(t,\tau_1+\tau_2),~~~~t,\tau_1,\tau_2\geq 0,
	$$
	and $w(t,0)=H_0(t)$, $t\geq0$ for the Heaviside function.
	This function $ \omega(t,\tau) $ is called the propagation function associated with the completely positive kernel $ b$. Moreover, $w(\cdot,\cdot)$ is Borel measurable on $\mathbb R_+\times\mathbb R_+$, $w(t,0)=1$ and $w(t,\infty)=0$.
	Let $b\in L^1_{loc}(\mathbb R^+)$ be of subexponential growth, which means
	$\int_0^\infty e^{-\epsilon t}|b(t)|dt <\infty$ for all $\epsilon > 0$. Moreover, $b$ is
	called $k$-regular if $|\lambda \hat{b}^{(n)}(\lambda)|\leq c |\hat{b} (\lambda)|$ for some constant $c > 0$, Re$\lambda>0$, $0\leq n\leq k$. From Proposition 3.5 and Proposition 4.9 in \cite{Pruss1993}, the solution $ s(t,\mu) $ of (\ref{solution1}) can be expressed as
	$$
	s(t,\mu) = -\int_{0}^{\infty} e^{-\mu \tau} d_\tau w(t,\tau), \quad t > 0,\ \mu > 0,
	$$
	and specially
	$$
	-\int_{0}^{\infty} d_\tau w(t,\tau) = 1.
	$$
	Additionally, using the integration by parts, the Laplace transform of $s(t,\mu) $ is given by
	$$
	\hat{s}(\lambda,\mu)=\left(-\int_0^\infty e^{-\mu\tau}d_\tau w(t,\tau)\right)^{\wedge}(\lambda)= \frac{1}{\lambda(1+\mu\hat{b}(\lambda))}.
	$$

	Now, let's turn back to our problem (\ref{eq:1.1}).
	Assume that $ Z(t,x) $ is the fundamental solution of problem
	\begin{equation*}
		\partial_tZ-(1+\partial_t^{*k})\Delta Z=0,\quad ~~t>0,\ ~~x\in\mathbb{R}^d,
	\end{equation*}
	satisfying $Z|_{t=0}=\delta_0$, by employing the theory of completely positive and the subordination principle considered in \cite{Peng2026}, and applying the Fourier transform with respect to $ x $, and the laplace transform with respect to $t$, we find that $\hat{\tilde{Z}}=\lambda^{-1}(1+|\xi|^2\hat{m})^{-1}\delta_0$, and then the solution is given by
	$
	\tilde{Z}(t,\xi) = s(t, |\xi| ^2)\delta_0 $, for $t \geq 0,$ $\xi \in \mathbb{R}^d,$
	where $ s(t, |\xi| ^2)  $ is the unique solution to equation (\ref{solution1}), $m=1+k $ for kernel $k\in L_{loc}^1(\mathbb R_+)$.
	Obviously,
	$$
	s(t, |\xi| ^2)=-\int_0^\infty e^{-t|\xi|^2}d_\tau\omega(t, \tau)= \tilde{Z}(t,\xi) .
	$$
	taking the inverse Fourier transform into $Z$ with respect to $ \xi $, we get the solution $ Z(t,x)$ be defined by
	\begin{equation*}
		Z(t,x)=-\int_{\mathbb R^d}\int_0^\infty G(\tau,x-y)\delta_0(y)d_\tau\omega(t, \tau)dy,\quad t>0, ~~x\in\mathbb{R}^d,
	\end{equation*}
	where $G$ is the Gaussian heat kernel.
	
	It should be noted that, if $k\in\mathcal{CMF}$,  then $m \in\mathcal{CMF}$ as well. As mentioned in \cite{Clement1981,Miller1968}, this property ensures the complete positivity of function $m$.  For more properties of the relaxation function, see references \cite{Ke2020, Gripenberg1990, Pruss1993}. However, for a generalization term $\partial_tu$ in (\ref{eq:1.1}), the kernel $k$ will contain an $\delta(t)$ the Dirac delta distribution that does not match the $\mathcal{PC}$-type. The current problem is not a unified under the $\mathcal{PC}$-type framework, for example, the stability and semigroup structure of semi-dynamical systems were studied in \cite{Nguyen2024}, the stability, instability, and blowup solutions were investigated in \cite{Vergara}.

	Using the resolvent family with subordination principle, let $-A=\Delta$ on $L^p(\mathbb R^d)$ for $1\leq p<\infty$ with norm $\|\cdot\|$, then $-A$ it the infinitesimal generator of the heat semigroup $T(t)=e^{-At}$, which is analytic and contractive semigroup on the maximal domain $D(A)=\{v\in L^p(\mathbb R^d),-A v\in L^p(\mathbb R^d)\}$. The resolvent set $\rho(-A)\subset \mathbb C^+$ if $1<p<\infty$ and there exists a constant $M\geq1$ such that $\|(\lambda+A)^{-1}\|\leq M |\lambda|^{-1}$ for Re$\lambda>0$.
	
	Consider the homogeneous equation of  problem (\ref{eq:1.1}), then it is equivalent to the Volterra-type integral equation
	\begin{equation}\label{equivalent}
		u+m* A u =u_0,
	\end{equation}
	which implies that $\hat{u}(\lambda)=\lambda^{-1}(1+\hat{m}A)^{-1}u_0.$
	
	Recall that
	$$
	(I+\lambda A)^{-1}=\lambda^{-1}\int_0^\infty e^{-\tau/\lambda} T(\tau)d\tau,~~~~{\rm Re}  \lambda>0,
	$$
	and then
	$$
	\hat{S}(\lambda)u_0=\lambda^{-1}(1+\hat{m}A)^{-1}u_0=\lambda^{-1}\varphi^m(\lambda)\int_0^\infty e^{-\tau \varphi^m(\lambda)} T(\tau)u_0d\tau=:\int_0^\infty h(\lambda, \tau) T(\tau)u_0d\tau.
	$$
Since $\hat{h}(\lambda,\cdot)(\mu)=\hat{s}(\lambda,\mu)$ and $\left(-d_\tau \hat{w}(\lambda,\tau)\right)^{\wedge}(\mu)=\hat{s}(\lambda,\mu)$, the uniqueness of Laplace transform implies that
	$h(\lambda, \tau)=-d_\tau \hat{w}(\lambda,\tau)$. Therefore, we have
	$$
	\hat{S}(\lambda)u_0=- \int_0^\infty T(\tau)u_0 d_\tau \hat{w}(\lambda,\tau)=\left( -\int_0^\infty T(\tau)u_0d_\tau w (t,\tau) \right)^{\wedge}(\lambda).
	$$
	Consequently, from uniqueness of Laplace transform, we get the solution by
	\begin{equation*}
		u(t)= S(t)u_0=-\int_0^\infty T(\tau)u_0d_\tau w (t,\tau) .
	\end{equation*}
	
	By the variation of parameters formula for Volterra equations, the solution of  (\ref{eq:1.1}) is given by
	\begin{equation}\label{Mild-solution}
		\begin{aligned}
			u(t)= S(t)u_0+\int_0^tS(t-s) f(u(s))ds.\end{aligned}
	\end{equation}

	Following these discusses, we say, a function $u$ is called a mild solution to problem (\ref{eq:1.1}) if $u\in C([0,T];L^p(\mathbb R^d))$ for every $T>0$ and it satisfies
	(\ref{Mild-solution}). In order to capture the dynamic properties, we introduce a generalized mild solution $u\in  C([0,T];L^p(\mathbb R^d))$ for every $T>0$ satisfying
	\begin{equation}\label{gene-Mild-solution}
		u(t)=\varphi(t)+\int_0^tS(t-s) f(u(s)) {d}s,\quad t>0,
	\end{equation}
	for some given function $\varphi\in C(\mathbb R_+,L^p(\mathbb R^d)).$
	Clearly, the mild solution in (\ref{Mild-solution})  is a special case to (\ref{gene-Mild-solution}) when $\varphi(t)=S(t)u_0.$

	\section{Well-posedness and stability}\label{sect:3}
	In this section, we first establish the well-posedness of solutions to problem (\ref{eq:1.1}).
	
	Throughout this paper, we always assume that the kernel function $k\in L_{loc}^{1}(\mathbb{R}^{+})$ is completely monotonic and maybe  unbounded on $\mathbb{R}^+$. Moreover, the nonlinear function should be satisfied the following assumption.
	\begin{itemize}
		\item [{\rm $(F_f)$}] Function $f$ satisfies $f(0)=0$ and is globally Lipschitz continuous, i.e., there exists a positive constant $L_f$ such that
		$$
		\| f(u)-f(v)\|  \leq L_f \| u - v\| \quad \text{for all } u, v \in L^p(\mathbb{R}^d).
		$$
	\end{itemize}
	
	\subsection{Well-posedness of solutions}
	Utilizing perturbation technique and working in a Banach space furnished by a suitable Bielecki weighted norm. we obtain the well-posedness results of problem (\ref{eq:1.1}).
	
	\begin{lemma}\label{le:3.1}
		For any $v\in$ $L^p(\mathbb{{R}}^{d})$, $S( \cdot ) v \in C( [ 0, \infty); L^p(\mathbb{{R}}^{d}))$. Moreover, $\|S(t)v\|\leq \|v\|$, for all $t\geq 0$.
	\end{lemma}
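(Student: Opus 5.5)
The plan is to work directly from the subordination representation
\[
S(t)v=-\int_0^\infty T(\tau)v\,d_\tau w(t,\tau),
\]
and to use two facts: $-d_\tau w(t,\cdot)$ is a probability measure on $[0,\infty)$, and $T(\cdot)$ is a contractive $C_0$-semigroup on $L^p(\mathbb R^d)$.

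\textbf{Step 1 (the bound).} For fixed $t\geq0$, the map $\tau\mapsto w(t,\tau)$ is nonincreasing, since $\psi^m_\tau(\lambda)=e^{-\tau\varphi^m(\lambda)}$ decreases in $\tau$. We also have $w(t,0)=1$ and $w(t,\infty)=0$. Hence $\mu_t:=-d_\tau w(t,\cdot)$ is a nonnegative measure of total mass $-\int_0^\infty d_\tau w(t,\tau)=1$. The Bochner integral then satisfies
\[
\|S(t)v\|\le\int_0^\infty\|T(\tau)v\|\,d\mu_t(\tau)\le\|v\|\int_0^\infty d\mu_t(\tau)=\|v\|.
\]
At $t=0$ we have $w(0,\tau)=0$ for $\tau>0$, so $\mu_0=\delta_0$ and $S(0)v=v$, which agrees with $u+m*Au=u_0$.

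\textbf{Step 2 (continuity).} Because of the uniform bound from Step 1, a density argument reduces continuity to $v$ in a dense subset; we take $v\in D(A)$.
\begin{itemize}
\item For such $v$, the resolvent equation gives $S(t)v=v-(m*AS(\cdot)v)(t)$, where $m=1+k\in L^1_{loc}$.
\item Since $A$ commutes with $T(\tau)$, it also commutes with $S(t)$, so $\|AS(s)v\|=\|S(s)Av\|\le\|Av\|$.
\item This yields, for $0\le t_1<t_2\le T$,
\[
\|S(t_2)v-S(t_1)v\|\le\|Av\|\Big(\int_{t_1}^{t_2}m(r)\,dr+\int_0^{t_1}|m(t_2-s)-m(t_1-s)|\,ds\Big).
\]
\item The right-hand side tends to $0$ as $t_2-t_1\to0$, by absolute continuity of the integral and continuity of translation in $L^1(0,T)$.
\end{itemize}
For general $v$, pick $v_n\in D(A)$ with $v_n\to v$. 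Then $\sup_t\|S(t)v-S(t)v_n\|\le\|v-v_n\|\to0$, so $S(\cdot)v$ is a uniform limit of continuous functions and is therefore continuous on $[0,\infty)$.

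\textbf{Main obstacle.} The delicate point is justifying the identity $S(t)v=v-m*AS(\cdot)v$ for $v\in D(A)$ directly from the subordination formula. The approach is to take Laplace transforms: $\hat S(\lambda)v=\lambda^{-1}(1+\hat m(\lambda)A)^{-1}v$, so
\[
\hat S(\lambda)v+\hat m(\lambda)A\hat S(\lambda)v=\lambda^{-1}v,
\]
and uniqueness of the Laplace transform gives the identity.

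An alternative route, which avoids the resolvent identity altogether, uses weak continuity of $t\mapsto\mu_t$ together with strong continuity and boundedness of $\tau\mapsto T(\tau)v$. Vague continuity of $\mu_t$ in $t$ follows from continuity of the propagation function in $t$, a property of $w$ for completely positive $m$. Since $\mu_t$ are probability measures, this upgrades to weak convergence, and dominated convergence gives continuity of $t\mapsto\int T(\tau)v\,d\mu_t$.
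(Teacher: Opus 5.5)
Your Step~1 is the paper's bound (Minkowski's inequality against the probability measure $\mu_t=-d_\tau w(t,\cdot)$). Your ``alternative route'' is essentially the paper's continuity argument, which splits the subordination integral over $[0,N]$ and $(N,\infty)$ and uses tightness together with a continuity theorem for the measures $\mu_t$. Your main route, density of $D(A)$ plus the Volterra identity $S(t)v=v-(m*S(\cdot)Av)(t)$, is genuinely different. However, the step you flag as delicate has a real gap: uniqueness of the Laplace transform gives that identity only for almost every $t$. So Step~2, and its density extension, only shows that $S(\cdot)v$ agrees a.e.\ with a continuous function. But $S(t)v$ is defined pointwise by the subordination integral, and continuity at every $t$ is exactly the claim. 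No argument through $\widehat S$ alone can close this: redefining $w(t_0,\cdot)$ at one instant $t_0$ changes $S(t_0)$ but not $\widehat S$, so the normalization of $w$ must enter. The repair is short. For $t_n\uparrow t$, left-continuity of $w(\cdot,\tau)$ gives $w(t_n,\tau)\to w(t,\tau)$ for every $\tau$. Up to countably many $\tau$ these are the tail functions $\mu_{t_n}((\tau,\infty))$ of probability measures, so $\mu_{t_n}\to\mu_t$ weakly. Since $\tau\mapsto\langle T(\tau)v,h\rangle$ is bounded and continuous for $h\in L^{p'}(\mathbb R^d)$, the map $t\mapsto S(t)v$ is weakly left-continuous. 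Now approach $t$ from the left through the full-measure set where the identity holds, and use that $v-m*S(\cdot)Av$ is continuous; this gives the identity for every $t$, and your estimate and density argument then go through. Once repaired, your route needs only the one-sided regularity of $w$ built into its construction plus $m\in L^1_{\mathrm{loc}}$, and it gives an explicit modulus of continuity on $D(A)$. The paper's route, by contrast, needs two-sided weak continuity of $t\mapsto\mu_t$.

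Two smaller points. First, in your difference estimate the first term should be $\int_0^{t_2-t_1}m(r)\,dr$, not $\int_{t_1}^{t_2}m(r)\,dr$. Since $m=1+k$ is nonincreasing, the latter is the smaller of the two, so the inequality as written does not follow; the corrected term still tends to $0$. Second, in the alternative route, ``continuity of the propagation function in $t$'' is false in general. For $k\equiv0$ one has $\varphi^m(\lambda)=\lambda$ and $w(t,\tau)=\mathbf 1_{\{t>\tau\}}$, which jumps at $t=\tau$, even though $\mu_t=\delta_t$ is weakly continuous. The correct justification, and the sound core of the paper's argument, is this: granted the representation $s(t,\mu)=-\int_0^\infty e^{-\mu\tau}\,d_\tau w(t,\tau)$ recalled in Section~2, the left side is continuous in $t$ for each $\mu>0$. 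The continuity theorem for Laplace transforms of probability measures then gives $\mu_{t_n}\to\mu_t$ weakly, and tightness is only needed for $t$ in compact intervals.
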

	
	\begin{proof}
		For any \( v \in L^p(\mathbb{R}^d) \), for each \( t_1, t_2\geq 0 \), we have
		\[
		\|S(t_1)v - S(t_2)v\|  = \left\| \int_0^\infty T(\tau)v d_\tau\, (\omega(t_2, \tau) - \omega(t_1,  \tau)) \right\| .
		\]
		Since \( \omega(t, \tau) \) is the propagation function associated with a completely positive kernel \( m \), i.e., for all \( t \geq0 \), we have $-\int_{0}^{\infty}d_\tau\omega(t,  \tau) = 1$, the family of measures $\{{d_{\tau}\omega(t,\cdot)}\}_{t\geq0}$ is tight. Thus, for any \(\varepsilon > 0\), there exists \( N > 0 \) such that
		\[
		-\int_N^\infty  d_\tau \omega(t,  \tau)  < \varepsilon, \quad \forall t \geq0.
		\]
		Since $\widehat{\omega}(\lambda,\tau)=\frac{e^{-\tau\vartheta(\lambda)}}{\lambda}$, where $\vartheta(\lambda)$ is a Bernstein function, and the family of measures $\{d_{\tau}\omega(t,\tau)\}$ is uniformly bounded and tight, according to the generalized continuity theorem, for any continuous and bounded function $f(\tau)$, the mapping
		$$t\mapsto \int_{0}^{N}f(\tau)\,d_{\tau}\omega(t,\tau),~~~~\text{ is continuous}.
		$$
		
		Splitting the integral into the intervals \([0, N]\) and \((N, \infty)\), we get
		\[
		\begin{aligned}
			\|S(t_1)v - S(t_2)v\| &\leq \left\| \int_0^N T(\tau)vd_\tau (\omega(t_2, \tau) - \omega(t_1,  \tau))\right\| \\
			&\quad +  \left\|\int_N^\infty T(\tau)v d_\tau \omega(t_2,  \tau) \right\| +\left\| \int_N^\infty T(\tau)v d_\tau\omega(t_1, \tau)\right\| .
		\end{aligned}
		\]
		For the first part, since the mapping $\tau\mapsto T(\tau)v$ is strongly continuous from $[0,\infty)$ to $L^p(\mathbb{R}^d)$. Moreover, since \(|d_\tau\, (\omega(t_2, \tau) - \omega(t_1,  \tau))|\) denotes the total variation of the difference of the measures,
		Therefore, when $t_1\to t_2$, the integral difference tends to 0. In summary, using $\|T(t)v\|\leq \|v\|$,  there is a $\delta>0$ such that for $|t_2-t_1|<\delta$,
		$$\|S(t_1)v-S(t_2)v\| \leq  3\varepsilon \|v\|.$$
		Hence, \( S(\cdot) v \) is strongly continuous with respect to \( t \) for any $v\in L^p(\mathbb{R}^d)$, i.e., \( S(\cdot) v \in C([0, \infty); L^p(\mathbb{R}^d)) \).  Furthermore, using Minkowski's integral inequality, we have
		\[
		\|S(t)v\|  \leq -\int_0^\infty \|T(\tau)v\| \omega(t, \mathrm{d}\tau) \leq\|v\|,~~~\forall t\geq0.
		\]
		The proof is completed.
	\end{proof}
	
	For any $\gamma>0$, let $\mathcal{C}_\gamma$ denote the Banach space of all continuous functions $w\in C([0,\infty);L^p(\mathbb{{R}}^{d}))$ endowed with the Bielecki weighted norm
	$$\|w\|_{\mathcal{C}_\gamma }=\sup_{t\in[0,\infty)}e^{-\gamma t} \|w(t)\| .$$
	
	\begin{theorem}\label{thm:1.1}
		Assume that ($F_f$) is fulfilled. Then for given continuous function $\varphi\in C([0,\infty);L^p(\mathbb{{R}}^{d})),$ the equation (\ref{gene-Mild-solution}) has a unique generalized mild solution $u(t,\varphi)$ on $[ 0,\infty) $. Furthermore, for $\gamma>L_f$, it yields
		
		$$\|u(t,\varphi)\| \leq M_\varphi e^{L_f t},\quad t\geq0,$$
		where $M_\varphi=\sup_{t\in[0,\infty)} \|\varphi(t)\| $. Moreover, the solution mapping $\varphi\mapsto u(t,\varphi)$ corresponding to (\ref{gene-Mild-solution}) is continuous  in the sense of
		$$\|u(t,\varphi)-u(t,\bar{\varphi})\| \leq e^{ L_f t} \|\varphi(t)-\bar{\varphi}(t)\| .$$
	\end{theorem}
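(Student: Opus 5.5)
We prove existence and uniqueness by a contraction argument in the weighted space $\mathcal{C}_\gamma$ with $\gamma>L_f$, and we obtain both bounds from a Gronwall inequality. Fix $\varphi\in C([0,\infty);L^p(\mathbb R^d))$. We may assume $M_\varphi<\infty$ for the stated bound; otherwise we work on $[0,T]$ for each $T$ and patch the solutions together using uniqueness. On $\mathcal C_\gamma$ define
$$
(\mathcal T u)(t)=\varphi(t)+\int_0^t S(t-s)f(u(s))\,ds .
$$
First, $\mathcal T u$ is continuous: the integral term is continuous by Lemma \ref{le:3.1}, via strong continuity of $S(\cdot)$, the bound $\|S(t)\|\le 1$, and dominated convergence applied to $\int_0^{t}S(t-s)f(u(s))ds$ split at $t_1\wedge t_2$. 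Second, $\mathcal T u$ has finite weighted norm. Indeed, $\|f(u(s))\|\le L_f\|u(s)\|\le L_f e^{\gamma s}\|u\|_{\mathcal C_\gamma}$ because $f(0)=0$, so the integral term is bounded by $L_f\gamma^{-1}e^{\gamma t}\|u\|_{\mathcal C_\gamma}$. Hence $\mathcal T$ maps $\mathcal C_\gamma$ into itself.

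For the contraction, Lemma \ref{le:3.1} and $(F_f)$ give
$$
e^{-\gamma t}\|(\mathcal Tu)(t)-(\mathcal Tv)(t)\|\le L_f\int_0^t e^{-\gamma(t-s)}e^{-\gamma s}\|u(s)-v(s)\|ds\le \frac{L_f}{\gamma}\|u-v\|_{\mathcal C_\gamma}.
$$
Since $\gamma>L_f$, $\mathcal T$ is a contraction. Banach's fixed point theorem then yields a unique fixed point in $\mathcal C_\gamma$.

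Uniqueness among all continuous solutions on $[0,\infty)$ (not just those in $\mathcal C_\gamma$) requires a separate argument. We get it on each $[0,T]$ by the same estimate with the sup taken over $[0,T]$, or alternatively by Gronwall applied to the difference of two solutions.

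For the growth bound, take norms in \eqref{gene-Mild-solution} and use Lemma \ref{le:3.1} together with $f(0)=0$:
$$
\|u(t)\|\le M_\varphi+L_f\int_0^t\|u(s)\|ds .
$$
Gronwall's inequality then gives $\|u(t)\|\le M_\varphi e^{L_f t}$. For the continuous dependence, the same computation applied to $u(t,\varphi)-u(t,\bar\varphi)$ gives
$$
\|u(t)-\bar u(t)\|\le \|\varphi(t)-\bar\varphi(t)\|+L_f\int_0^t\|u(s)-\bar u(s)\|ds .
$$

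The main obstacle is this final step. Gronwall with a nonconstant forcing term yields $\sup_{s\le t}\|\varphi(s)-\bar\varphi(s)\|\,e^{L_f t}$, not the pointwise value $\|\varphi(t)-\bar\varphi(t)\|$ as stated. I would therefore prove the estimate with $\sup_{s\in[0,t]}\|\varphi(s)-\bar\varphi(s)\|$ on the right-hand side. Using the nondecreasing majorant $\sup_{s\le t}$ makes the Gronwall step clean. I would then note that this is the intended reading, and that it is exactly what is needed later for continuity with respect to the topology of uniform convergence on compact subsets.
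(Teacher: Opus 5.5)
Your argument is the paper's own: $\Phi$ is a contraction on $\mathcal C_\gamma$ with constant $L_f/\gamma$, and both estimates then follow from Gronwall's inequality. Your objection to the last estimate is justified, and the paper's proof is what goes wrong there. It applies Gronwall with the non-monotone forcing $\|\varphi(t)-\bar\varphi(t)\|$ and asserts the pointwise bound. That bound is actually false, not merely out of reach of Gronwall.

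Here is a counterexample. Take $f(u)=L_f u$ and $\bar\varphi\equiv0$, so $u(\cdot,\bar\varphi)\equiv0$. Let $\varphi(t)=\psi(t)v$, where:
\begin{itemize}
\item $0\le\psi\in C([0,\infty))$, $\psi\not\equiv0$, and $\psi=0$ on $[1,\infty)$;
\item $0\le v\in L^1\cap L^p(\mathbb R^d)$ and $v\neq0$.
\end{itemize}
Each $S(t)$ averages the heat semigroup against the probability measure $-d_\tau\omega(t,\cdot)$, so it is positive and preserves the integral. Write $u=\sum_{n\ge0}(L_fK)^n\varphi$ with $(Kg)(t)=\int_0^tS(t-s)g(s)\,ds$. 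This series converges in $\mathcal C_\gamma$ and has nonnegative terms. Since $\varphi(2)=0$, it gives $u(2,\varphi)\ge L_f\int_0^1\psi(s)S(2-s)v\,ds$. The right-hand side is a nonnegative function with integral $L_f\int_0^1\psi(s)\,ds\int_{\mathbb R^d}v\,dx>0$. The claimed bound $e^{2L_f}\|\varphi(2)-\bar\varphi(2)\|$, however, is zero. So the version with $\sup_{s\in[0,t]}\|\varphi(s)-\bar\varphi(s)\|$ that you prove is the correct statement. It is also what continuity of $T_t$ with respect to $\rho$ uses.

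Your two other precautions fill real omissions in the paper's proof as well. First, $\Phi$ maps $\mathcal C_\gamma$ into itself only if $\varphi\in\mathcal C_\gamma$. This fails for general continuous $\varphi$, which is why your localization to $[0,T]$ (or the assumption $M_\varphi<\infty$) is needed. Second, uniqueness of the fixed point in $\mathcal C_\gamma$ does not by itself give uniqueness in $C([0,\infty);L^p(\mathbb R^d))$. The paper argues in the wrong direction from the inclusion $\mathcal C_\gamma\subset C([0,\infty);L^p(\mathbb R^d))$, and your Gronwall argument on each $[0,T]$ supplies the missing step.
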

	\begin{proof}
		
		We define an operator $\Phi$ on $C([0,\infty);L^p(\mathbb{R}^{d}))$ by
		$$\Phi(u)(t)=\varphi(t)+\int_0^tS(t-s) f(u(s))ds.$$
		By the properties of the operators $S(t)$, one concludes that $\Phi$ acts on $C([0,\infty);L^p(\mathbb{R}^{d})).$
		
		Let $u,v$ be in ${\mathcal{C}_\gamma}$. Because $f(\cdot)$ is Lipschitz continuous, we have
		$$\begin{aligned}\|\Phi(u)(t)-\Phi(v)(t)\|
			&= \left\|\int_{0}^{t}S(t-\tau)(f(u(\tau))-f(v(\tau)))d\tau\right\| \\
			&\leq L_{f}\int_{0}^{t} \|u(\tau)-v(\tau)\| d\tau\\
			&\leq\:\frac{L_f}{\gamma}\:\sup_{\tau\in[0,\infty)}e^{-\gamma \tau} \|u(\tau)-v(\tau)\|  ( e^{\gamma t}-1).\end{aligned}$$
		Consequently, $ \|\Phi(u) -\Phi(v) \|_{\mathcal{C}_\gamma} \leq {L_f} \|u-v\|_{\mathcal{C}_\gamma}/{\gamma}.$
		It yields that $\varphi$ is a contraction mapping in ${\mathcal{C}_\gamma}$ in view of $L_f<\gamma.$ Using $f(0)=0$, similarly we get that $\Phi: \mathcal C_\gamma\to \mathcal C_\gamma$. Note that $\mathcal{C}_\gamma$ is a subset space of $C([0,\infty);L^p(\mathbb{R}^{d}))$,
		Hence, the Banach fixed point theorem implies that $\Phi$ has a unique fixed point $u$ belonging to $C([0,\infty);L^p(\mathbb{R}^{d})).$
		
		Moreover, by the definition of $\Phi(u)$, we can rewrite
		$$u(t,\varphi)=\varphi(t)+\int_0^tS(t-\tau)\left(f(u(\tau,\varphi))-f(0)\right)d\tau,$$
		which together with Lemma \ref{le:3.1} implies that
		$$\begin{aligned}\|u(t,\varphi)\| &\leq\:\|\varphi(t)\| +L_{f}\int_{0}^{t} \|u(\tau,\varphi)\| d\tau \leq\:M_{\varphi} +L_{f}\int_{0}^{t}\|u(\tau,\varphi)\| d\tau\\
			&\leq M_\varphi e^{L_f t},\end{aligned}$$
		where we apply the integral form of Gronwall's inequality.
		In particular, when $\varphi(t)=S(t)u_0$, it holds for a mild solution to (\ref{eq:1.1}) as
	 $\|u(t )\|  \leq\:e^{L_f t}\|u_0\|$, $t\geq0.$

		Let $\varphi,\bar{\varphi}\in C([0,\infty);L^p(\mathbb{R}^{d})).$  Using a similar arguments in this proof, one has
		$$\begin{aligned}\|u(t,\varphi)-u(t,\bar{\varphi})\| &\leq\:\|\varphi(t)-\bar{\varphi}(t)\| +L_{f}\int_{0}^{t}\|u(\tau,\varphi)-u(\tau,\bar{\varphi})\| d\tau \end{aligned}$$
		By using again with the Gronwall inequality,
		one concludes that
		$$\|u(t,\varphi)-u(t,\bar{\varphi})\| \leq e^{L_f t}\|\varphi(t)-\bar{\varphi}(t)\| .$$
		The proof is completed.

	\end{proof}

	\subsection{Continuity of kernel functions}
	
	\begin{lemma}\label{le:3.2}
		For any $\eta,\varepsilon>0$, $\lambda>0$ there exists $\delta(\lambda)>0$ with $\delta(\lambda)\to0$ as $\lambda\to0$ such that for all real completely positive kernels $m_,\bar{m}\in L^1_{loc}(\mathbb R^+)$ satisfying 
$|\hat{m}-\hat{\bar{m}}|  \leq \delta(\lambda)$ it holds
		$$\int_0^t \|S(\tau)-\bar{S}(\tau)\| d\tau < e^{\eta t} \varepsilon.$$
		where $S,\bar{S}$ are the solution operators with respect to the kernels $ m, \bar{m},$ respectively.
	\end{lemma}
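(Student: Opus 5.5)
The plan is to pass to the Laplace domain, where the two resolvents differ only through $\hat m-\hat{\bar m}$, and then return to the time domain with an exponential weight.

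\textbf{Step 1 (exponential weight).} Since the integrand $\|S(\tau)-\bar S(\tau)\|$ is nonnegative,
\[
\int_0^t \|S(\tau)-\bar S(\tau)\|\,d\tau \le e^{\eta t}\int_0^\infty e^{-\eta\tau}\|S(\tau)-\bar S(\tau)\|\,d\tau .
\]
So it suffices to make the weighted integral on the right smaller than $\varepsilon$. This reduces the statement to a bound that is uniform in $t$ and involves Laplace-type quantities at real part $\eta$.

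\textbf{Step 2 (resolvent identity).} I will use the representation $\hat S(\lambda)=\lambda^{-1}(1+\hat m(\lambda)A)^{-1}$ from Section 2. Since $(1+\hat m A)^{-1}$ and $(1+\hat{\bar m}A)^{-1}$ commute,
\[
\hat S(\lambda)-\hat{\bar S}(\lambda)=\lambda^{-1}\big(\hat{\bar m}(\lambda)-\hat m(\lambda)\big)\,A(1+\hat m A)^{-1}(1+\hat{\bar m}A)^{-1}.
\]
The sectorial estimate $\|(\mu+A)^{-1}\|\le M|\mu|^{-1}$ gives $\|A(1+\hat m A)^{-1}\|\le (1+M)/|\hat m(\lambda)|$. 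The complete positivity of $m=1+k$ gives $\operatorname{Re}\hat m(\lambda)>0$ on $\operatorname{Re}\lambda>0$, so this bound is available there. Combining,
\[
\|\hat S(\lambda)-\hat{\bar S}(\lambda)\|\le \frac{(1+M)\,|\hat m(\lambda)-\hat{\bar m}(\lambda)|}{|\lambda|\,|\hat m(\lambda)|}.
\]
Under the hypothesis $|\hat m-\hat{\bar m}|\le\delta(\lambda)$, this is at most $(1+M)\delta(\lambda)/(|\lambda||\hat m(\lambda)|)$.

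\textbf{Step 3 (back to the time domain; main obstacle).} Passing from this Laplace-domain bound to the time-domain integral of the operator norm is the main obstacle, since a Laplace transform bound does not control $\int e^{-\eta\tau}\|\cdot\|$ directly.

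My first attempt uses the subordination formula $S(t)=-\int_0^\infty T(\tau)\,d_\tau w(t,\tau)$. This gives
\[
S(t)-\bar S(t)=-\int_0^\infty T(\tau)\,d_\tau\big(w(t,\tau)-\bar w(t,\tau)\big),
\]
and hence $\|S(t)-\bar S(t)\|\le \operatorname{Var}_\tau\big(w(t,\cdot)-\bar w(t,\cdot)\big)$. I would then estimate the Laplace transform in $t$ of $w-\bar w$ via
\[
\frac{e^{-\tau/\hat m(\lambda)}-e^{-\tau/\hat{\bar m}(\lambda)}}{\lambda},
\]
which is $O\big(\tau\,|\hat m-\hat{\bar m}|/|\hat m|^2\big)$. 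After that I would use the tightness argument of Lemma \ref{le:3.1} to cut off large $\tau$.

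As a fallback, I would argue by contradiction and compactness. Take sequences $\bar m_n$ with $\hat{\bar m}_n\to\hat m$ pointwise. By the continuity theorem for Laplace transforms of the monotone functions $w_n(\cdot,\tau)$, the measures $\bar w_n$ converge weakly. Dominated convergence, justified by the uniform bound $\|S(t)\|\le1$ from Lemma \ref{le:3.1} and the factor $e^{-\eta\tau}$, then makes the weighted integral tend to $0$.

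\textbf{Step 4 (choice of $\delta$).} Finally, I choose $\delta(\lambda)$ proportional to $\varepsilon|\lambda||\hat m(\lambda)|/(1+M)$. This choice forces $\delta(\lambda)\to0$ as $\lambda\to0$, as the statement requires. The bound from Step 3, substituted into Step 1, then yields $\int_0^t\|S(\tau)-\bar S(\tau)\|\,d\tau<e^{\eta t}\varepsilon$.
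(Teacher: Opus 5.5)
There is a genuine gap at the step that carries the argument. Your Step 2 bound controls only $\|\hat S(\eta)-\hat{\bar S}(\eta)\|=\bigl\|\int_0^\infty e^{-\eta\tau}(S(\tau)-\bar S(\tau))\,d\tau\bigr\|$. This is \emph{at most} the quantity $\int_0^\infty e^{-\eta\tau}\|S(\tau)-\bar S(\tau)\|\,d\tau$ that Step 1 needs to be small, so choosing $\delta$ in Step 4 from the Step 2 bound proves nothing, as you yourself note in Step 3. (That choice also does not give $\delta(\lambda)\to0$ as $\lambda\to0$: for $m=1+k$ one has $\lambda\hat m(\lambda)=1+\lambda\hat k(\lambda)\ge1$.) Everything therefore rests on Step 3. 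Your first attempt there is exactly the paper's route: subordination, $\|S(t)-\bar S(t)\|\le\|w(t,\cdot)-\bar w(t,\cdot)\|_{TV}$, the mean value theorem for $e^{-\tau\varphi^m(\eta)}-e^{-\tau\varphi^{\bar m}(\eta)}$, and $|\varphi^m(\eta)-\varphi^{\bar m}(\eta)|\le\delta e^{2\eta}/(m_c\bar m_c)$. It cannot be completed. Take constant kernels $k\equiv c$, $\bar k\equiv\bar c$ with $c\neq\bar c$. Then $-d_\tau w(t,\cdot)$ is the Dirac mass at $(1+c)t$ and $-d_\tau\bar w(t,\cdot)$ is the Dirac mass at $(1+\bar c)t$. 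Hence $\|w(t,\cdot)-\bar w(t,\cdot)\|_{TV}=2$ for every $t>0$, however small $|\hat m(\eta)-\hat{\bar m}(\eta)|=|c-\bar c|/\eta$ is. No cutoff in $\tau$ helps, because the total-variation bound itself is not small. The paper gets past this point by asserting, ``by Tonelli'', that $\int_0^\infty e^{-\lambda t}\|w(t,\cdot)-\bar w(t,\cdot)\|_{TV}\,dt=\int_0^\infty|d_\tau(\hat w-\hat{\bar w})(\lambda,\tau)|$. Testing on partitions gives only ``$\ge$''. In the example the left side equals $2/\eta$ while the right side tends to $0$ as $\bar c\to c$, so you cannot import that step either.

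In that example the lemma's conclusion does hold, but for a different reason. By parabolic scaling, $\|S(t)-\bar S(t)\|=\|T((1+c)t)-T((1+\bar c)t)\|=\|T(1+c)-T(1+\bar c)\|$, which is small by norm continuity of the analytic heat semigroup on $(0,\infty)$. A correct proof has to use this cancellation inside $T(\cdot)$, so your fallback is the more promising line, but as written it has two holes.

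First, the paper uses the hypothesis only as closeness at the single point $\lambda=\eta$. A contradiction sequence then gives only $\hat{\bar m}_n(\eta)\to\hat m(\eta)$, which determines no limit. Under that reading the statement is in fact false: $k=e^{-t}$ and $\bar k=2e^{-(2+\eta)t}$ give $\hat m(\eta)=\hat{\bar m}(\eta)$ but $S\neq\bar S$. To invoke the continuity theorem you must assume closeness of $\hat m$ and $\hat{\bar m}$ on all of $(0,\infty)$, and the resulting $\delta$ then depends on the fixed $m$.

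Second, weak convergence of the measures $-d_\tau\bar w_n(t,\cdot)$ yields only strong convergence $\bar S_n(t)v\to S(t)v$. The bound $\|S(t)\|\le1$ supplies a dominating function, but not the operator-norm convergence $\|S(t)-\bar S_n(t)\|\to0$ that your dominated-convergence step requires. That upgrade has to be proved, e.g.\ from norm continuity of $T(\cdot)$ on $(0,\infty)$ together with the fact that $-d_\tau w(t,\cdot)$ has no atom at $\tau=0$ for $t>0$.
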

	\begin{proof}
		Let $\|\omega(t,\cdot)-\bar{\omega} (t,\cdot)\|_{TV}$ be the total variation measure of the difference of the propagation functions given by
		$$\|\omega(t,\cdot)-\bar{\omega} (t,\cdot)\|_{TV}:=\int_{0}^{\infty} |d_\tau \left(\omega(t,\tau)-\bar{\omega} (t,\tau)\right)|.$$
		
		For $\lambda>0$, since the Laplace transforms to the difference
		\[
		\widehat{\omega}(\lambda, \tau) - \widehat{\bar{\omega}}(\lambda, \tau) = \int_0^\infty e^{-\lambda t} (\omega(t, \tau) - \bar{\omega}(t, \tau)) \, dt,
		\]
		taking the total variation measure with respect to $\tau$ gives
		\[
		\int_0^\infty |d_\tau (\widehat{\omega}(\lambda, \tau) - \widehat{\bar{\omega}}(\lambda, \tau))| = \int_0^\infty \left| d_\tau \left( \int_0^\infty e^{-\lambda t} (\omega(t, \tau) - \bar{\omega}(t, \tau)) \, dt \right) \right|.
		\]
		Using Tonelli's theorem to interchange the order of integration, we have
		\[
		\int_0^\infty |d_\tau (\widehat{\omega}(\lambda, \tau) - \widehat{\bar{\omega}}(\lambda, \tau))| = \int_0^\infty e^{-\lambda t} \left( \int_0^\infty |d_\tau (\omega(t, \tau) - \bar{\omega}(t, \tau))| \right) dt.
		\]
		Hence, we have
		\[
		\int_0^\infty e^{-\lambda t} \|\omega(t, \cdot) - \bar{\omega}(t, \cdot)\|_{TV} \, dt = \int_0^\infty |d_\tau \left(\widehat{\omega}(\lambda, \tau) - \widehat{\bar{\omega}}(\lambda, \tau)\right)|.
		\]

		In view of $\psi_\tau^m(\lambda) - \psi _\tau^{\bar{m }}(\lambda)=e^{-\tau\varphi^m(\lambda)}-e^{-\tau\varphi^{\bar{m }}(\lambda)}$, each $\tau\in\mathbb R^+,$  we have
		\begin{align*}
			d_\tau(\widehat{\omega}(\lambda,\tau) - \widehat{\bar{\omega}}(\lambda,\tau))=(\varphi ^m(\lambda) -  \varphi  ^{\bar{m }}(\lambda))\widehat{\omega}(\lambda,\tau)+\varphi ^{\bar{m }}(\lambda)\frac{\psi_\tau^m(\lambda) - \psi _\tau^{\bar{m }}(\lambda) }{\lambda}.
		\end{align*}
		Since $\varphi^{m }$ and $\varphi^{\bar{m }}$ are real Bernstein functions for $\lambda>0$, for the case if $\varphi^m(\lambda)>\varphi^{\bar{m }}(\lambda)$, using the mean value theorem,  then
		$$e^{-\tau\varphi^m(\lambda)}-e^{-\tau\varphi^{\bar{m }}(\lambda)}=\tau e^{-\tau\xi(\lambda)}(\varphi^{m }(\lambda)-\varphi^{\bar{m }}(\lambda)),$$
		where $\xi(\lambda)$ lies in $[\varphi^{\bar{m }}(\lambda),\varphi^{m }(\lambda)]$. By virtue of  $\varphi^{m }(\lambda)\geq0$ and $\varphi^{\bar{m }}(\lambda)\geq0$, we have $\xi(\lambda)\geq0$. Then
		$$ \psi_\tau^m(\lambda) -  \psi _\tau^{\bar{m }}(\lambda) =\tau e^{-\tau\xi(\lambda)} (\varphi^{m }(\lambda)-\varphi^{\bar{m }}(\lambda) )\leq \tau e^{-\tau\varphi^{\bar{m }}(\lambda)}(\varphi^{m }(\lambda)-\varphi^{\bar{m }}(\lambda)).$$
		Thus,
		\begin{align*}
			d_\tau(\widehat{\omega}(\lambda,\tau) - \widehat{\bar{\omega}}(\lambda,\tau))\leq &(\varphi ^m(\lambda) -  \varphi  ^{\bar{m }}(\lambda))\widehat{\omega}(\lambda,\tau)+\varphi ^{\bar{m }}(\lambda)\frac{\tau e^{-\tau\varphi^{\bar{m }}(\lambda)}(\varphi^{m }(\lambda)-\varphi^{\bar{m }}(\lambda))}{\lambda}\\
			=&(\varphi ^m(\lambda) -  \varphi  ^{\bar{m }}(\lambda))(\widehat{\omega}(\lambda,\tau)+\tau\varphi ^{\bar{m }}(\lambda)\widehat{\bar{\omega}}(\lambda,\tau)).
		\end{align*}
		For the another case if $\varphi^m(\lambda)\leq \varphi^{\bar{m }}(\lambda)$, similarly using $1-e^{-\tau x}\leq \tau x$ for $x>0$, we have
		\begin{align*}
			d_\tau(\widehat{\omega}(\lambda,\tau) - \widehat{\bar{\omega}}(\lambda,\tau))\leq &(\varphi ^m(\lambda) -  \varphi  ^{\bar{m }}(\lambda))\widehat{\omega}(\lambda,\tau)+\varphi ^{\bar{m }}(\lambda)\frac{\tau e^{-\tau\varphi^{ m }(\lambda)}(\varphi  ^{\bar{m }}(\lambda)-\varphi^{m }(\lambda) )}{\lambda}\\
			=&(\varphi ^m(\lambda) -  \varphi  ^{\bar{m }}(\lambda))\widehat{\omega}(\lambda,\tau)+\tau\varphi ^{\bar{m }}(\lambda)\widehat{ \omega }(\lambda,\tau)(\varphi  ^{\bar{m }}(\lambda)-\varphi^{m }(\lambda) ).
		\end{align*}
		Therefore, we have
		\begin{align*}
			|d_\tau(\widehat{\omega}(\lambda,\tau) - \widehat{\bar{\omega}}(\lambda,\tau))|\leq & |\varphi ^m(\lambda) -  \varphi  ^{\bar{m }}(\lambda)| (\widehat{\omega}(\lambda,\tau)+\tau\varphi ^{\bar{m }}(\lambda)\widehat{ \omega }(\lambda,\tau)+\tau\varphi ^{\bar{m }}(\lambda)\widehat{ \bar{\omega} }(\lambda,\tau)).
		\end{align*}

 From the assumptions of $m,\bar{m}$, let $\lambda >0$, the difference of their Laplace transforms is $|\hat{m}-\hat{\bar{m}}|  \leq \delta(\lambda)$.
			Therefore, in view of
$$
\hat{m}(\lambda)\geq \int_0^1 e^{-\lambda s} m(s)ds\geq e^{-\lambda }\int_0^1 m(s)ds=:e^{-\lambda }m_c,
$$
similar $\hat{\bar{m}}(\lambda)\geq e^{-\lambda }\bar{m}_c$, then
			$$|\varphi^m(\lambda) - \varphi^{\bar{m }}(\lambda)| = \left|\frac{1}{\hat{m}(\lambda)} - \frac{1}{\hat{\bar{m}}(\lambda)}\right| \leq \frac{\delta(\lambda)e^{2\lambda}}{m_c\bar{m}_c}.$$

			Using this inequalities and the estimate for the difference of $|d_\tau(\widehat{\omega}(\lambda,\tau) - \widehat{\bar{\omega}}(\lambda,\tau))|$, we obtain
			
			$$\int_{0}^{\infty}e^{-\lambda t}\|\omega(t,\cdot)-\bar{\omega}(t,\cdot)\|_{TV}dt \leq \frac{\delta(\lambda)e^{2\lambda}}{m_c\bar{m}_c}\int_{0}^{\infty}(\widehat{\omega}(\lambda,\tau)+\tau\varphi ^{\bar{m }}(\lambda)\widehat{ \omega }(\lambda,\tau)+\tau\varphi ^{\bar{m }}(\lambda)\widehat{ \bar{\omega} }(\lambda,\tau))d\tau.$$
			Evaluating the integrals
			$$\int_{0}^{\infty}  e^{-\tau x} d\tau=\frac{1}{x },~~\int_{0}^{\infty}\tau e^{-\tau x} d\tau=\frac{1}{x^{2} },~~x>0,$$
			it follows that
			$$\int_{0}^{\infty}e^{-\lambda t}\|\omega(t,\cdot)-\bar{\omega}(t,\cdot)\|_{TV} dt \leq \frac{\delta(\lambda)e^{2\lambda}}{\lambda m_c\bar{m}_c}\left(\frac{1}{ \varphi ^{ m  }(\lambda) }+\frac{\varphi ^{\bar{m }}(\lambda)}{\left(\varphi ^{ m  }(\lambda)\right)^2}+\frac{1}{\left(\varphi ^{\bar{m }}(\lambda)\right)^2}\right).$$

			By the representation formula of solution operator, and $\omega,\bar{\omega}$ are the propagation functions with respect to the kernels $ m, \bar{m},$ respectively, one has
			$$\begin{aligned}\|S(t)-\bar{S}(t)\| &=\left\|-\int_0^\infty T(\tau)d_\tau(\omega(t, \tau)- \bar{\omega}(t, \tau))\right\|  \leq \|\omega(t,\cdot)-\bar{\omega}(t,\cdot)\|_{TV}.
			\end{aligned}$$
			In view of $\varphi^{m },\varphi^{\bar{m }}\in \mathcal{BF}$ with respect to kernels $m,\bar{m}$, it thus is continuous for $\lambda=\eta$, for any $\eta>0$, we have
			\begin{align*}
				\int_0^t \|S(\tau)-\bar{S}(\tau)\|  d\tau \leq & e^{\eta t}\int_0^t    e^{-\eta \tau} \|\omega(\tau,\cdot)-\bar{\omega}(\tau,\cdot)\|_{TV}d\tau\\
				\leq& e^{\eta t}\int_0^\infty    e^{-\eta \tau} \|\omega(\tau,\cdot)-\bar{\omega}(\tau,\cdot)\|_{TV}d\tau\\
				\leq& e^{\eta t}\left(\frac{\delta(\lambda)e^{2\lambda}}{\lambda m_c\bar{m}_c} \left(\frac{1}{ \varphi ^{ m  }(\lambda) }+\frac{\varphi ^{\bar{m }}(\lambda)}{\left(\varphi ^{ m  }(\lambda)\right)^2}+\frac{1}{\left(\varphi ^{\bar{m }}(\lambda)\right)^2}\right)\right)\bigg|_{\lambda=\eta}.
			\end{align*}
			
			Now, let
			$$
			c(\eta)=\frac{ 1}{\lambda    }\left(\frac{1}{ \varphi ^{ m  }(\lambda) }+\frac{\varphi ^{\bar{m }}(\lambda)}{\left(\varphi ^{ m  }(\lambda)\right)^2}+\frac{1}{\left(\varphi ^{\bar{m }}(\lambda)\right)^2}\right) \bigg|_{\lambda=\eta},
			$$
			for any $\varepsilon>0$, choose $\delta(\eta) = m_c\bar{m}_c\varepsilon/  c(\eta)$. When $|\hat{m}-\hat{\bar{m}}|  \leq \delta(\eta)$, it holds that
			$$\int_0^t \|S(\tau)-\bar{S}(\tau)\|  d\tau < e^{\eta t} \varepsilon.$$
			This completes the proof.
		\end{proof}

Obviously, the above Theorem \ref{thm:1.1} and Lemma \ref{le:3.2} hold for $t\in[0,T]$ with every $T>0$ and $\varphi\in C( [ 0, T] ;L^p(\mathbb{R}^{d}))$. Accordingly, we get the stability of kernel function.
		\begin{theorem}
			Assume that ($F_f$) is fulfilled. Let $u_m(t,\varphi)$ denote the unique solution to (\ref{gene-Mild-solution}) corresponding to the kernel $m$ and the initial condition $\varphi.$ Then the solution mapping $(\varphi,m)\mapsto u_m(t,\varphi)$ is continuous from $C( [ 0, T] ;L^p(\mathbb{R}^{d})) \times L^{1}( 0, T) $ into $C( [ 0, T] ; L^p(\mathbb{R}^{d})) .$
		\end{theorem}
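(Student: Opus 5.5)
We fix $(\varphi,m)$ and a nearby pair $(\bar\varphi,\bar m)$ in $C([0,T];L^p(\mathbb R^d))\times L^1(0,T)$. We write $u=u_m(\cdot,\varphi)$ and $\bar u=u_{\bar m}(\cdot,\bar\varphi)$, and let $S,\bar S$ be the corresponding solution operators. The plan is to subtract the two integral equations (\ref{gene-Mild-solution}) and add and subtract $\int_0^t S(t-s)f(\bar u(s))ds$:
\begin{equation*}
u(t)-\bar u(t)=\varphi(t)-\bar\varphi(t)+\int_0^t S(t-s)\bigl(f(u(s))-f(\bar u(s))\bigr)ds+\int_0^t \bigl(S(t-s)-\bar S(t-s)\bigr)f(\bar u(s))ds .
\end{equation*}
By Lemma \ref{le:3.1} and $(F_f)$, the second term is bounded by $L_f\int_0^t\|u(s)-\bar u(s)\|ds$.

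For the third term we use $f(0)=0$ together with the bound of Theorem \ref{thm:1.1}: $\|f(\bar u(s))\|\le L_f M_{\bar\varphi}e^{L_f s}\le L_f M_{\bar\varphi}e^{L_fT}$ on $[0,T]$. Here $M_{\bar\varphi}\le M_\varphi+\|\varphi-\bar\varphi\|_{C([0,T])}$ stays bounded. Substituting $\tau=t-s$, the third term is then at most
\[
L_f M_{\bar\varphi}e^{L_fT}\int_0^T\|S(\tau)-\bar S(\tau)\|d\tau .
\]
Gronwall's inequality then gives
\begin{equation*}
\sup_{t\in[0,T]}\|u(t)-\bar u(t)\|\le e^{L_fT}\Bigl(\|\varphi-\bar\varphi\|_{C([0,T])}+L_fM_{\bar\varphi}e^{L_fT}\int_0^T\|S(\tau)-\bar S(\tau)\|d\tau\Bigr).
\end{equation*}
The first summand tends to zero as $\bar\varphi\to\varphi$.

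It remains to show $\int_0^T\|S(\tau)-\bar S(\tau)\|d\tau\to0$ as $\bar m\to m$ in $L^1(0,T)$. This is where Lemma \ref{le:3.2} enters, and it is the main obstacle: the lemma is phrased in terms of closeness of the Laplace transforms $\hat m,\hat{\bar m}$ at a fixed $\lambda=\eta>0$, while the hypothesis only controls $\|m-\bar m\|_{L^1(0,T)}$. As the paper remarks, Lemma \ref{le:3.2} holds on $[0,T]$, so I would treat the kernels as restricted to $[0,T]$, e.g.\ extended by zero or by their values, since $S$ on $[0,T]$ depends only on $m|_{[0,T]}$ by causality of the Volterra equation (\ref{equivalent}). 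With that convention,
\[
|\hat m(\eta)-\hat{\bar m}(\eta)|\le\int_0^T e^{-\eta s}|m(s)-\bar m(s)|ds\le\|m-\bar m\|_{L^1(0,T)} .
\]

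Given $\varepsilon>0$, we fix $\eta>0$ and choose $\|m-\bar m\|_{L^1(0,T)}<\delta(\eta)$. Lemma \ref{le:3.2} then yields $\int_0^T\|S-\bar S\|d\tau<e^{\eta T}\varepsilon$. Since $e^{\eta T}$ is a fixed constant, this quantity is small.

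One technical point must be checked: the constant $\delta(\eta)$ in Lemma \ref{le:3.2} depends on $m_c,\bar m_c$ and $\varphi^{\bar m}(\eta)$. These remain bounded away from $0$ and $\infty$ for $\bar m$ in a small $L^1$-ball around $m$, because $\bar m_c=\int_0^1\bar m$ and $\hat{\bar m}(\eta)$ depend continuously on $\bar m$ in $L^1$ (taking $T\ge1$ without loss of generality). So $\delta$ can be chosen uniformly over that neighbourhood. Combining the two estimates proves joint continuity of $(\varphi,m)\mapsto u_m(\cdot,\varphi)$ into $C([0,T];L^p(\mathbb R^d))$.
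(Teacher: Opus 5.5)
Your proof follows the paper's route step for step: the same add-and-subtract splitting, Lemma~\ref{le:3.1} with $(F_f)$ for the Lipschitz term, the growth bound of Theorem~\ref{thm:1.1} for $\|f(\bar u)\|$, Lemma~\ref{le:3.2} for $\int\|S-\bar S\|$, and then Gronwall. In two places you are more careful than the paper. You use $M_{\bar\varphi}$ where the paper writes $M_\varphi$. You also notice that the $\delta$ built in the proof of Lemma~\ref{le:3.2} depends on $\bar m$, through $\bar m_c$ and $\varphi^{\bar m}(\eta)$.

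The gap is the bridge you build from $\|m-\bar m\|_{L^1(0,T)}$ to the hypothesis of Lemma~\ref{le:3.2}.
\begin{itemize}
\item[(a)] Lemma~\ref{le:3.2} concerns completely positive kernels on all of $\mathbb R_+$. Its proof uses the Bernstein function $\varphi^m=1/\hat m$, the propagation function $\omega$, the subordination formula for $S$, and an integral over $t\in(0,\infty)$.
\item[(b)] Extending by zero destroys complete positivity. Already for $m\equiv1$, the kernel $b=\chi_{[0,T]}$ gives a solution of $r_\mu+\mu b*r_\mu=b$ equal to $e^{-\mu t}$ on $[0,T)$, which jumps to $-(1-e^{-\mu T})<0$ at $t=T^+$. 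So the lemma cannot be invoked for the truncated kernels. Your causality remark is correct but does not help here.
\item[(c)] If you keep the original kernels instead, the inequality $|\hat m(\eta)-\hat{\bar m}(\eta)|\le\|m-\bar m\|_{L^1(0,T)}$ is false. The tails on $(T,\infty)$ are not controlled by the hypothesis. The same objection hits your uniformity claim for $\varphi^{\bar m}(\eta)$.
\end{itemize}
The paper's own proof simply asserts this step. So you located the weak point correctly, but your patch does not close it.

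What is needed is to show that, for admissible kernels $m=1+k$ with $k\in\mathcal{CMF}$, convergence $\bar m\to m$ in $L^1(0,T)$ forces $\hat{\bar m}(\eta)\to\hat m(\eta)$ for each fixed $\eta>0$. This is true:
\begin{itemize}
\item[(i)] Monotonicity gives a uniform tail bound: $\bar k(t)\le\bar k(T)\le\frac{2}{T}\int_{T/2}^{T}\bar k$ for $t\ge T$.
\item[(ii)] Bernstein's representation extends $\bar k$ holomorphically to $\mathrm{Re}\,z>0$, with $|\bar k(z)|\le\bar k(\mathrm{Re}\,z)\le\frac{1}{x_0}\int_0^{x_0}\bar k$ for $\mathrm{Re}\,z\ge x_0$. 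Hence the kernels form a normal family there.
\item[(iii)] By the identity theorem, every locally uniform limit coincides with $k$. So $\bar k\to k$ pointwise on $(0,\infty)$, and dominated convergence handles the tail.
\end{itemize}
This gives sequential, hence metric, continuity. Since $\bar m\ge1$ and $\hat{\bar m}(\eta)\to\hat m(\eta)>0$, the constants of Lemma~\ref{le:3.2} are uniform along the sequence.
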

		\begin{proof}
			Observe that the estimate and Theorem \ref{thm:1.1} means that the solution mapping is the composition of bounded maps, so it is also bounded.
			Fix a point $(\varphi,m)$ and consider $(\bar{\varphi},\bar{m})$ belonging to a neighbourhood of $(\varphi,m).$ Let $u_m(t,\varphi)$, $\bar{u}_{\bar{m}}(t,\bar{\varphi})$ be the unique mild solutions to (\ref{gene-Mild-solution}) with the kernels $m$ and $\bar{m}$ and the initial conditions $\varphi(t)$ and $\bar{\varphi}(t)$, respectively.
			
			Let denote $\bar{S}$  the solution operators corresponding to the kernel $\bar{m}.$ By the formulae determining a mild solution to (\ref{gene-Mild-solution}), we have
			$$\begin{aligned}\|u_{m}(t,\varphi)-\bar{u}_{\bar{m}}(t,\bar{\varphi})\|
				&=\:\left\|\varphi(t)-\bar{\varphi}(t)+\int_{0}^{t}\Big[S(t-\tau)f(u(\tau))-\bar{S}(t-\tau)f(\bar{u}(\tau))\Big]d\tau\right\| \\&\leq\:\|\varphi(t)-\bar{\varphi}(t)\| +\int_{0}^{t} \|f(u(\tau))-f(\bar{u}(\tau))\| d\tau\\
				&\quad+\int_{0}^{t}\|S(t-\tau)-\bar{S}(t-\tau)\| \|f(\bar{u}(\tau))\| d\tau.
			\end{aligned}$$
			This means from Theorem \ref{thm:1.1} that
			$$\begin{aligned}\|u_{m}(t,\varphi)-\bar{u}_{m}(t,\bar{\varphi})\| &\leq\:\|\varphi(t)-\bar{\varphi}(t)\| +L_{f}\int_{0}^{t}\|u_{m}(\tau,\varphi)-\bar{u}_{\bar{m}}(\tau,\bar{\varphi})\| d\tau\\&\quad+L_fM_\varphi e^{ L_f t}\int_{0}^{t}\|S(\tau)-\bar{S}(\tau)\| d\tau  , \end{aligned}$$
			where from the assumption of $(F_f)$, we have used
			$$ \|f(\bar{u}(t) )\| \leq L_{f}  \|\bar{u}_{\bar{m}}(t,\bar{\varphi})\| \leq L_fM_\varphi e^{L_f t}  .$$
			
			For any $\varepsilon>0$. By the assumptions of $\varphi,\bar{\varphi}$, there exists a positive number $ \delta_{0}=\delta_0(\varepsilon)$ such that whenever $\|\varphi-\bar{\varphi}\|_{C}\leq\delta_{0} $. Let $ \varepsilon$ be an arbitrary positive number. Thanks to Lemma \ref{le:3.2} by choose $\eta=\gamma-L_f$, there is a $\delta_1=\delta_1(\varepsilon)>0$ such that
			$$\|m-\bar{m}\|_{L^1(0,T)}\leq\delta_1\quad\text{yields}
\quad\int_{0}^{t}\|S(\tau)-\bar{S}(\tau)\| d\tau \leq e^{(\gamma-L_f)t}\varepsilon .$$
			Set $e(t)=\|u_{m}(t,\varphi)-\bar{u}_{\bar{m}}(t,\bar{\varphi})\| .$ Plugging the estimate above, one gets
			$$e(t)\:\leq\:\|\varphi(t)-\bar{\varphi}(t)\| +L_fM_\varphi e^{ \gamma t}\varepsilon +L_{f}\int_{0}^{t}e(\tau)d\tau.$$
			This means that for choosing $\delta=\min\{\delta_0,\delta_1\}$, we have
			$$\|u_m(\cdot,\varphi)-u_{\bar{m}}(\cdot,\bar{\varphi})\| \leq \left(\|\varphi(t)-\bar{\varphi}(t)\| +L_fM_\varphi e^{ \gamma t}\varepsilon \right)e^{L_f t}.$$
			This finishes the proof.
		\end{proof}

		\subsection{Semi-dynamical}\label{sect:3.3}
	We now to consider the infinite dimensional case with Lipschitz nonlinearity. Let  $\mathcal{C}$ be the Banach space of continuous functions $\varphi:\mathbb{R}^+\to L^p(\mathbb{R}^{d})$ with the topology uniform convergence on compact subsets. This topology is induced by the metric
		$$\rho(\varphi,h):=\sum_{n=1}^{\infty}\frac{1}{2^{n}}\rho_{n}(\varphi,h),$$
		where
		$$\rho_n(\varphi,h):=\frac{\sup_{t\in[0,n]}\|\varphi(t)-h(t)\| }{1+\sup_{t\in[0,n]}\|\varphi(t)-h(t)\| }.$$
		
		We follow Chapter XI, pp. 178-179, in Sell \cite{Sell1971} closely, simplifying it to this  autonomous  case, and show that the singular Volterra integral equation (\ref{gene-Mild-solution}) generates an autonomous semi-dynamical system on the space $\mathcal{C}.$
		For a given Lipschitzian nonlinearity $f\in\mathcal{C}$, we define the operator $T_t:\mathcal{C}\to\mathcal{C}$ for each $t\in\mathbb{R}^+$, $\theta\in\mathbb{R}^+,$ by
		\begin{equation}\label{semigroup}
			(T_t \varphi)(\theta)=\varphi(t+\theta)+\int_0^tS(t+\theta-\tau) f(\tau,u_\varphi)\:\mathrm{d}\tau,\quad\varphi \in\mathcal{C}([0,+\infty);L^p(\mathbb{R}^{d})),
		\end{equation}
		where $u_\varphi$ is a unique solution of the Volterra integral equation (\ref{gene-Mild-solution}) for this $\varphi$, i.e.,
		\begin{equation*}
			u_\varphi(t)=\varphi(t)+\int_0^tS(t-s)f(s,u_\varphi)ds.
		\end{equation*}

By employing an argument similar to that of Theorem 4 in  \cite{Ford2013}, the following lemma can be obtained.
		\begin{theorem}
Assume that ($F_f$) holds. Then the family of operators $\{T_t\}_{t\geq0}$ associated to problem (\ref{gene-Mild-solution}) forms a semigroup of Lipschitz  mapping in $\mathcal{C}([0,+\infty);L^p(\mathbb{R}^{d}))$ with respect to the metric $\rho,$ i.e., $T_{t+s}=T_t\circ T_s$ for any $t,s>0$ and $T_0=I$.
		\end{theorem}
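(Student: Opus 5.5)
We will verify three things in turn: that each $T_t$ maps $\mathcal C$ into $\mathcal C$, that $T_0=I$ and $T_{t+s}=T_t\circ T_s$, and that each $T_t$ is Lipschitz with respect to $\rho$. Throughout, $f(\tau,u_\varphi)$ in (\ref{semigroup}) is read as $f(u_\varphi(\tau))$. Theorem \ref{thm:1.1} supplies the unique solution $u_\varphi\in C([0,\infty);L^p(\mathbb R^d))$.

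\textbf{Well-definedness.} For fixed $t$, continuity of $\theta\mapsto (T_t\varphi)(\theta)$ is routine. The term $\varphi(t+\theta)$ is continuous. For the integral term, strong continuity of $S(\cdot)$ (Lemma \ref{le:3.1}) and the bound $\|S\|\le 1$ give continuity of $\theta\mapsto S(t+\theta-\tau)f(u_\varphi(\tau))$ for each $\tau$. The integrand is dominated by $L_f\sup_{[0,t]}\|u_\varphi\|$, so dominated convergence finishes the argument. The identity $T_0=I$ is immediate, since the integral over $[0,0]$ vanishes.

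\textbf{Semigroup identity.} The key step is the shift identity
\[
u_{T_s\varphi}(\theta)=u_\varphi(s+\theta),\qquad \theta\ge0 .
\]
To prove it, write
\[
u_\varphi(s+\theta)=\varphi(s+\theta)+\int_0^{s}S(s+\theta-\tau)f(u_\varphi(\tau))\,d\tau+\int_s^{s+\theta}S(s+\theta-\tau)f(u_\varphi(\tau))\,d\tau .
\]
The first two terms together equal $(T_s\varphi)(\theta)$. Substituting $\tau=s+r$ turns the last term into $\int_0^\theta S(\theta-r)f(u_\varphi(s+r))\,dr$. Hence $v(\theta):=u_\varphi(s+\theta)$ solves (\ref{gene-Mild-solution}) with forcing $T_s\varphi$, and uniqueness in Theorem \ref{thm:1.1} gives the claim.

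With the shift identity in hand, compute
\[
(T_t T_s\varphi)(\theta)=(T_s\varphi)(t+\theta)+\int_0^t S(t+\theta-r)f(u_\varphi(s+r))\,dr .
\]
Expanding $(T_s\varphi)(t+\theta)$ and substituting $\tau=s+r$ in the last integral combines the two integrals into $\int_0^{s+t}S(s+t+\theta-\tau)f(u_\varphi(\tau))\,d\tau$. This is exactly $(T_{t+s}\varphi)(\theta)$.

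\textbf{Lipschitz property.} The main obstacle is that $\rho$ is not a norm: it is a sum of truncated seminorms $\rho_n$ over $[0,n]$. We first work with sup-norms on compact intervals. Since $\|S\|\le1$, for $\theta\in[0,n]$,
\[
\|(T_t\varphi)(\theta)-(T_t h)(\theta)\|\le \|\varphi(t+\theta)-h(t+\theta)\|+L_f\int_0^t\|u_\varphi-u_h\|\,d\tau .
\]
The Gronwall estimate from Theorem \ref{thm:1.1}, in its local form $\sup_{[0,t]}\|u_\varphi-u_h\|\le e^{L_ft}\sup_{[0,t]}\|\varphi-h\|$, then gives
\[
\sup_{[0,n]}\|T_t\varphi-T_th\|\le C_t\sup_{[0,n+t]}\|\varphi-h\|,\qquad C_t=1+L_fte^{L_ft}.
\]

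To transfer this to $\rho$, use two facts. First, $x\mapsto x/(1+x)$ is increasing, and $\frac{Cx}{1+Cx}\le C\frac{x}{1+x}$ for $C\ge1$. Second, $\rho_n$ is nondecreasing in $n$. Put $k=\lceil t\rceil$. Then
\[
\rho(T_t\varphi,T_th)\le C_t\sum_n 2^{-n}\rho_{n+k}(\varphi,h)\le C_t2^{k}\rho(\varphi,h).
\]
Thus $T_t$ is Lipschitz with constant $C_t2^{\lceil t\rceil}$. The only delicacy is this index shift by $k$ in the metric, which is harmless because the weights $2^{-n}$ absorb the shift at the cost of the factor $2^k$.
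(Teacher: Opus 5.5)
Your proof is correct and takes the standard Sell/Doan--Kloeden route that the paper invokes only by citation; the paper writes out no details. The steps are the same: the shift identity $u_{T_s\varphi}(\theta)=u_\varphi(s+\theta)$ obtained from uniqueness, concatenation of the two convolution integrals, and a Gronwall bound on $[0,n+t]$ transferred to $\rho$ through the index shift, which costs the factor $2^{\lceil t\rceil}$. You are also right to use the sup-form bound $\sup_{[0,t]}\|u_\varphi-u_h\|\le e^{L_ft}\sup_{[0,t]}\|\varphi-h\|$ rather than the pointwise estimate stated in Theorem \ref{thm:1.1}, which does not follow from Gronwall's inequality as written there.
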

		
\begin{remark}
We observe that the semigroup structure of the semi-dynamic system indeed lies on $(\mathcal C,\rho)$, rather that on the usual space $\mathcal C$ with norm $\|\cdot\|$. The semigroup law at initial point $t,s=0$ is invalid unless $S(t)$ reduces to a classical $C_0$-semigroup.
\end{remark}		
		
		\section{Attractors of semi-dynamical systems}\label{sect:4}
Section \ref{sect:3} proved that the Volterra integral equation (\ref{gene-Mild-solution}), which is associated with autonomous generalized Rayleigh-Stokes equations in $L^p(\mathbb{R}^{d})$, generates a semigroup on the space $\mathcal{C}$, where $\mathcal{C}$ is the space of continuous functions
$\varphi:\mathbb{R}^+\to L^p(\mathbb{R}^{d})$
 endowed with the topology of uniform convergence on compact subsets. When restricted to initial functions of the form $\varphi(t) \equiv S(t){u_0}$
 for $u_0 \in  L^p(\mathbb{R}^{d})$, this semigroup serves as a semi-dynamical system for the generalized Rayleigh-Stokes equations.

The aim of this section is to show that when the vector field
$f$ possesses a dissipativity property, the semigroup does have a global generalized attractor in a subspace $\mathcal{C}_\alpha$ of $\mathcal{C}([0,+\infty);L^p(\mathbb{R}^{d}))$. This attractor is quite unusual because it only attracts a restricted class of initial values. Moreover, it must be defined directly in terms of an omega limit set within the space
$\mathcal{C}_\alpha$, which gives it some unconventional properties. Additionally, we will also illustrate its relationship with the omega limit set in the space
$L^p(\mathbb{R}^{d})$.

		\subsection{Existence and uniqueness of solutions}\label{sect:4.1}
 	
		Consider the nonlinear function $f(u)$ in problem (\ref{eq:1.1}) is locally Lipschitz condition and satisfies the uniform dissipation condition:
		\begin{itemize}
			\item [{\rm $(F_l)$}] The nonlinear function $f$ satisfies $f(0)=0$ and is locally Lipschitz continuous, i.e., for every $R>0$, there exists a  constant $L(R)>0$ such that
			$$
			\|f(u)-f(v)\| \leq L(R) \| u - v\| \quad \text{for all } u, v \in B_R,
			$$
			where $B_R$ denotes the closed ball in $L^p(\mathbb{R}^d)$ centered at the origin with radius $R$,
			\item [{\rm $(F_d)$}] for $\alpha>0$, $\beta\geq0$, $\sigma> 2,$
			\begin{equation}\label{ine}
				\langle f(u),|u|^{p-2}u\rangle \leq-\alpha\|u\| ^{p+\sigma-2}+\beta,
			\end{equation}
			where $p^{\prime}=p/(p-1)$, and \(\langle \cdot, \cdot \rangle \) denotes the duality pairing between \( L^{p'} \) and \( L^p \).
		\end{itemize}

		\begin{remark}
			Theorem \ref{thm:1.1} proves the existence of a unique global solution to the initial value problem for the generalized Rayleigh-Stokes equation with a globally Lipschitz vector field. However, for many dissipative generalized Rayleigh-Stokes equations, the vector field is only continuously differentiable and thus locally Lipschitz rather than globally Lipschitz. Nevertheless, the dissipation condition (\ref{ine}) together with the local Lipschitz condition is sufficient to guarantee the global existence and uniqueness of solutions.
		\end{remark}
		
		To illustrate this, consider a large ball $B_{R}  = \left\{ u\in L^p(\mathbb{R}^d) :  \|u\| \leq R \right\}$, where $ R > \left( {\beta}/{\alpha}\right)^{1/{p+\sigma-2}}$, and define a truncation function:
		
		$$f_{R}(u):=\begin{cases}f(u),&\|u\| \leq R,\\f\left(\frac{u_R}{\|u\|}\right),&\|u\|> R.\end{cases}$$
		One finds that function $f_{R}(u)$ is global Lipschitz. Moreover, $f_{R}(u)$ satisfies the dissipation condition (\ref{ine}) within the ball $B_{R}$.
		
		\begin{corollary} Substituting $f(u)$ in problem (\ref{eq:1.1}) by $f_R(u)$, then this problem
			admits a global solution $u_R(t)\in C([0,T];L^p(\mathbb{R}^d) )\cap C^{1}((0,T);W^{2,p}(\mathbb{R}^d))$, which is unique for all initial data.
		\end{corollary}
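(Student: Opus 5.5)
The plan is to reduce the corollary to Theorem \ref{thm:1.1} and then improve the regularity of the resulting mild solution.

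\textbf{Step 1: $f_R$ is globally Lipschitz.} The truncation is meant as the radial retraction $u\mapsto Ru/\|u\|$ outside $B_R$. This retraction $P_R$ is Lipschitz on $L^p(\mathbb R^d)$ with constant at most $2$, and it maps into $B_R$. Hence $f_R=f\circ P_R$ satisfies
$$\|f_R(u)-f_R(v)\|\le L(R)\,\|P_Ru-P_Rv\|\le 2L(R)\|u-v\|.$$
Since $f(0)=0$, we also have $f_R(0)=0$. So $(F_f)$ holds with $L_f=2L(R)$.

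\textbf{Step 2: existence and uniqueness.} Theorem \ref{thm:1.1} with $\varphi(t)=S(t)u_0$ applies; this $\varphi$ is continuous and bounded by Lemma \ref{le:3.1}. It gives a unique global mild solution
$$u_R(t)=S(t)u_0+\int_0^tS(t-s)f_R(u_R(s))\,ds \in C([0,T];L^p(\mathbb R^d))$$
for every $T>0$, with $\|u_R(t)\|\le e^{2L(R)t}\|u_0\|$. Uniqueness for all initial data comes from the contraction argument in $\mathcal C_\gamma$ with $\gamma>2L(R)$.

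\textbf{Step 3: the regularity $C^1((0,T);W^{2,p})$.} This is the step I expect to be the main obstacle. The plan is to use the subordination formula
$$S(t)=-\int_0^\infty T(\tau)\,d_\tau\omega(t,\tau)$$
together with the analyticity of the heat semigroup, which gives $\|AT(\tau)\|\le C\tau^{-1}$. From these I would aim for a smoothing estimate $\|AS(t)\|\le C/\mu(t)$, where $\mu(t)$ comes from $-\int_0^\infty \tau^{-1}\,d_\tau\omega(t,\tau)$. This needs to be controlled by the Laplace-transform representation $\hat{\omega}(\lambda,\tau)=e^{-\tau\varphi^m(\lambda)}/\lambda$, and finiteness for $t>0$ is expected from $m=1+k$ with $k$ completely monotone. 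Given this estimate:
\begin{enumerate}[(i)]
\item the term $S(t)u_0$ lies in $D(A)=W^{2,p}$ for $t>0$;
\item the convolution term lies in $D(A)$ after splitting it near $s=t$ and using the Lipschitz (hence Hölder-in-time) regularity of $f_R(u_R(\cdot))$ on compact subintervals.
\end{enumerate}
Time differentiability follows from $\partial_t S(t)u_0=-(m*AS)(t)'$, via the resolvent identity (\ref{equivalent}), differentiated in the sense of Volterra equations.

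If the general estimate on $\mu(t)$ proves delicate, I would first establish the regularity for $u_0\in D(A)$. For such $u_0$ the maps $t\mapsto AS(t)u_0=S(t)Au_0$ are continuous. I would then pass to general data through the classical solution framework for $(\mathcal{PC})$-type kernels, using that $f_R(u_R)$ is continuous. I would state the regularity under this interpretation and check consistency with the equation $\partial_tu_R-(1+\partial_t^{*k})\Delta u_R=f_R(u_R)$.
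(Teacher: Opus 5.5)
\textbf{Verdict.} Steps 1 and 2 are correct, but Step 3 has a genuine gap: the proposed smoothing bound is infinite, and the time derivative in $W^{2,p}$ is never addressed.

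\textbf{Steps 1 and 2, and what the paper proves instead.} Your Steps 1 and 2 make explicit what the paper only asserts. The radial retraction $P_R$ is $2$-Lipschitz on any normed space, so $f_R=f\circ P_R$ satisfies $(F_f)$ with $L_f=2L(R)$. Theorem \ref{thm:1.1} with $\varphi=S(\cdot)u_0$ then gives a unique global mild solution in $C([0,T];L^p(\mathbb R^d))$.

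The paper's own proof spends no effort on this. It takes solvability and regularity for granted. It then shows that a solution starting in $B_R$ never leaves $B_R$, using the memory variable $\eta(\lambda,t)$, the extended space $\mathcal X$, and the sign of $\frac{d}{dt}\|u\|^p$ at a first exit time. That confinement is what lets $u_R$ solve the untruncated problem. Your proposal has no counterpart of this step. It is not part of the literal statement, but it is what the corollary is used for.

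\textbf{The gap in Step 3.} There are two problems.

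\emph{The proposed smoothing bound is infinite.} It cannot come from the subordination integral. With $\mu_t=-d_\tau\omega(t,\cdot)$, one has $\int_0^\infty\tau^{-1}\mu_t(d\tau)=+\infty$ for every $t>0$ as soon as $k$ is non-constant. The reason is that $\mu_t$ has a density strictly positive at $\tau=0^+$. Its value there has Laplace transform in $t$ equal to $\varphi^m(\lambda)/\lambda=1/(1+\lambda\hat k(\lambda))$. For $k(t)=e^{-t}$ this is $\frac12+\frac{1}{4(\lambda+1/2)}$, so the density at $\tau=0^+$ equals $\frac14e^{-t/2}>0$. This is why Lemma \ref{lemma:4.8} works for $\nabla$: the $\tau^{-1/2}$-moment is finite, but the $\tau^{-1}$-moment is not.

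$AS(t)$ is still bounded for $t>0$, but only through cancellation in the multiplier $|\xi|^2s(t,|\xi|^2)$. So you must obtain $\|AS(t)\|\le C/t$ from $\hat S(\lambda)=\lambda^{-1}\varphi^m(\lambda)(\varphi^m(\lambda)+A)^{-1}$ and a contour integral, as in Lemmas \ref{lemma:4.1} and \ref{lemma:4.4}. Moving norms inside the subordination integral does not work.

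\emph{The time derivative in $W^{2,p}$ is not handled.} Even with that bound, your items (i)--(ii) only give $u_R(t)\in D(A)$ for $t>0$. Your differentiation remark concerns only $S(t)u_0$, and only in the $L^p$-norm. Membership in $C^1((0,T);W^{2,p}(\mathbb R^d))$ needs $\partial_tu_R(t)\in W^{2,p}$. That means controlling $A\,\partial_t\int_0^tS(t-s)f_R(u_R(s))\,ds$, which requires roughly a H\"older continuous time derivative of $f_R(u_R(\cdot))$. With $f_R$ merely Lipschitz this is not available: the radial truncation is not $C^1$, and $(F_l)$ gives nothing more. Nothing in your plan supplies it. Also, $D(A)=W^{2,p}$ fails for $p=1$, which the paper allows.

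\textbf{What your argument establishes.} Your fallback of stating the regularity ``under this interpretation'' concedes the point. The argument gives the unique global mild solution, not the claimed $C^1((0,T);W^{2,p})$ regularity. The paper does not prove that regularity either; it simply asserts it.
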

		
		\begin{proof}
			In the following, we employ the method of by contradiction to show that the solution starting from this ball will always stay confined within it,
			i.e., for any $t\geq0$, we have $\|u(t)\| < R$, for any $R>0$. Let $\|u_0\|<R$, and let $T^*=\sup\{t\geq0:\|u(s)\| <R$, $s\in[0,t]\}$. By means of $u_0\in B_R$, we have $T^*>0$. Suppose $T^*<\infty$, by the continuity, there exists a first crossing time $t_0>0$ such that $\|u(t_0)\|=R$, and for any $t<t_0$, $\|u(t)\| <R$.
			
			Let $D(\mathcal{A})$ be the maximal domain, denote the linear operator by
			$$\mathcal{A}u:=(1+\partial_t^{*k} )\Delta u,\quad u\in D(\mathcal{A}).$$
			Since the operator $\partial_t^{*k}$ is nonlocal, we need to introduce an extended state space (see e.g. \cite{Dafermos1970}) to transform the temporal convolution into a local operator in space. Due to the complete monotonicity of $k$, there exists a nonnegative Borel measure $\mu$ such that $k(t) = \int_0^\infty e^{-\lambda t} d\mu(\lambda)$. By choosing the memory variable $\eta(\lambda, t)$ to represent the influence of the history path on $\Delta u$, we have
			$$\eta(\lambda, t) = \int_0^t e^{-\lambda(t-s)} \Delta u(s) ds, \quad \lambda > 0.$$
			Since
			$$\frac{d}{dt} \int_0^t k(t-s) \Delta u(s) ds = \frac{d}{dt} \int_0^\infty \int_0^t e^{-\lambda(t-s)} \Delta u(s) ds d\mu(\lambda) = \int_0^\infty \frac{\partial}{\partial t} \eta(\lambda, t) d\mu(\lambda),$$
			and $\frac{\partial}{\partial t} \eta(\lambda, t) = -\lambda \eta(\lambda, t) + \Delta u(t)$, we have
			$$
			\partial_t^{*k}\Delta u(t) = \int_0^\infty (-\lambda \eta(\lambda, t) + \Delta u(t)) d\mu(\lambda).$$
			Define the extended state $V(t) = (u(t), \eta^t)$, where $u(t) \in L^p(\mathbb{R}^d)$ is the current state, and the memory function $\eta^t(\lambda) = \eta(\lambda, t)$ encodes the history path for $\lambda > 0$. The Banach space $\mathcal{X} = L^p(\mathbb{R}^d) \times L^p(\mathbb{R}^d \times [0, \infty); d\mu)$, where the norm is weighted by the spectral measure $\mu$ of the kernel,
			is defined as
			$$\|V\|_{\mathcal{X}}=\left(\|u\| ^p+\int_0^\infty\|\eta(\lambda)\| ^p d\mu(\lambda)\right)^{1/p}.$$
			The dual map $J_{\mathcal{X}}:\mathcal{X}\rightarrow\mathcal{X}'$ is defined componentwise as
			$$
			J_{\mathcal{X}}(V)=(J_p(u),J_p(\eta))=(|u|^{p-2}u,|\eta|^{p-2}\eta),
			$$
			satisfying $\langle J_{\mathcal{X}}(V),V\rangle_{\mathcal{X}',\mathcal{X}}=\|V\|_{\mathcal{X}}^p$.
			The extended operator $\mathcal{A}_{\text{ext}}:D(\mathcal{A}_{\text{ext}})\subset\mathcal{X}\rightarrow\mathcal{X}$ is defined as
			$$\mathcal{A}_{\text{ext}}(u, \eta) = \left( \Delta u + \int_0^\infty (-\lambda \eta(\lambda) + \Delta u) d\mu(\lambda), -\lambda \eta(\lambda) + \Delta u \right).$$
			The regularity of the solution guarantees that the state $V(t)=(u(t),\eta^t)\in D(\mathcal{A}_{\text{ext}})$.
			
			Next, we prove that \(\mathcal{A}_{\mathrm{ext}}\) is dissipative on \(\mathcal{X}\), i.e.,
			\[
			\langle\mathcal{A}_{\mathrm{ext}} V, J_{\mathcal{X}}(V)\rangle_{\mathcal{X}^{\prime}, \mathcal{X}} \leq 0, \quad V \in D\left(\mathcal{A}_{\mathrm{ext}}\right).
			\]
			Expanding the inner product, we obtain
			\begin{equation}\label{inner}
				\begin{aligned}
					\langle\mathcal{A}_{\mathrm{ext}} V, J_{\mathcal{X}}(V)\rangle &= \langle\mathcal{A} u, J_{p}(u)\rangle + \int_{0}^{\infty} \langle -\lambda \eta(\lambda) + \Delta u, J_{p}(\eta(\lambda)) \rangle \, d\mu(\lambda) \\
					&= \langle \Delta u, J_p(u) \rangle + \int_0^\infty \langle (-\lambda \eta(\lambda) + \Delta u), J_p(u) \rangle \, d\mu(\lambda) \\
					&\quad - \int_0^\infty \lambda \|\eta(\lambda)\|_{L^p}^p \, d\mu(\lambda) + \int_0^\infty \langle \Delta u, J_p(\eta(\lambda)) \rangle \, d\mu(\lambda).
				\end{aligned}
			\end{equation}
			Applying H\"{o}lder's inequality and Young's inequality,
			\[
			|\langle \eta(\lambda), J_p(u) \rangle| \leq \|\eta(\lambda)\| \|u\| ^{p-1} \leq \frac{\epsilon_1^p \|\eta(\lambda)\| ^p}{p} + \frac{\|u\| ^p}{p^{\prime}\epsilon_1^{1/(p-1)}},
			\]
			\[
			|\langle \Delta u, J_p(\eta(\lambda)) \rangle| \leq \|\Delta u\| \|\eta(\lambda)\| ^{p-1} \leq \frac{\epsilon_2^p \|\Delta u\| ^p}{p} + \frac{\|\eta(\lambda)\| ^p}{p^{\prime} \epsilon_2^{1/(p-1)}}.
			\]
			Substituting these estimates into (\ref{inner}),
			\[
			\begin{aligned}
				\langle\mathcal{A}_{\mathrm{ext}} V, J_{\mathcal{X}}(V)\rangle &\leq \langle\Delta u, J_{p}(u)\rangle + \int_{0}^{\infty} \langle\Delta u, J_{p}(u)\rangle \, d\mu(\lambda) + \int_{0}^{\infty} -\lambda \|\eta(\lambda)\| ^{p} \, d\mu(\lambda) \\
				&\quad + C \int_{0}^{\infty} \lambda \left( \epsilon_2 \|\Delta u\| ^{p} + C_{\epsilon} \|\eta(\lambda)\| ^{p} \right) \, d\mu(\lambda),
			\end{aligned}
			\]
			where \(C, C_\epsilon, \epsilon_1, \epsilon_2\) are positive constants. Since \(\mu\) is a  nonnegative  measure and the Laplacian is dissipative, we have \(\langle \Delta u, J_p(u) \rangle \leq 0\). Thus, by choosing appropriate \(\epsilon_1, \epsilon_2 > 0\) such that the negative terms dominate, we finally obtain
			\[
			\langle\mathcal{A}_{\mathrm{ext}} V, J_{\mathcal{X}}(V)\rangle \leq 0.
			\]
			For any \(u \in D(\mathcal{A})\), taking \(V = (u, \eta \equiv 0) \in D(\mathcal{A}_{\text{ext}})\), we have
			\[
			\langle\mathcal{A}_\mathrm{ext} V, J_{\mathcal{X}}(V)\rangle = \langle\mathcal{A} u, J_p(u)\rangle + \int_0^\infty \langle\Delta u, J_p(0)\rangle \, d\mu(\lambda) = \langle\mathcal{A} u, J_p(u)\rangle \leq 0.
			\]
			
			Consider the evolution of the $L^p$-norm of the solution with respect to time. Differentiating in time, we have
			\[
			\frac{d}{dt}\|u(t)\| ^{p} = p \int_{\mathbb{R}^{d}} |u|^{p-2}u \, \partial_{t}u \, dx.
			\]
			Substituting the equation $\partial_{t}u = \mathcal{A}u + f_{R}(u)$ into this identity, we obtain
			\[
			\frac{d}{dt}\|u\| ^{p} = p \langle \mathcal{A}u, J_p(u) \rangle + p \langle f_{R}(u), J_p(u) \rangle.
			\]
			Since $\langle \mathcal{A}u, J_p(u) \rangle \leq 0$, it follows that
			\[
			\frac{d}{dt}\|u\| ^{p} \leq p \langle f_{R}(u), J_p(u) \rangle \leq p \left( -\alpha \|u\|_{L^{p}}^{p+\sigma-2} + \beta \right).
			\]
			At time $t = t_0$, we have $\|u(t_0)\|  = R$, so $f_R(u(t_0)) = f(u(t_0))$. The dissipativity condition gives
			\[
			\langle f_R(u(t_0)), J_p(u(t_0)) \rangle \leq -\alpha R^{p+\sigma-2} + \beta.
			\]
			Therefore,
			\[
			\frac{d}{dt}\|u\| ^p \bigg|_{t=t_0} \leq p\left(-\alpha R^{p+\sigma-2} + \beta\right) < 0 \quad \left( R > (\beta/\alpha)^{1/(p+\sigma-2)} \right).
			\]
			Since $\frac{d}{dt}\|u\| ^p(t_0) < 0$ and $u \in C^1$, there exists a $\delta > 0$ such that for all $t \in (t_0 - \delta, t_0)$, we have $\|u(t)\| ^p > \|u(t_0)\| ^p = R^p$, and thus $\|u(t)\|  > R$. However, this contradicts the fact that $\|u(t)\|  < R$ for $t < t_0$ from the definition of $T^*$.
			Hence, $T^* = \infty$, it means that $\|u(t)\|_{L^p} < R$ holds for all $t \geq 0$.
			Since the choice of $R$ is arbitrary, this result holds for all such $R > \left( {\beta}/{\alpha}\right)^{1/(p+\sigma-2)}$ and all initial values $u_0 \in L^p(\mathbb{R}^{d})$. The proof is completed.
		\end{proof}

		\subsection{Absorbing sets }
In this subsection, we consider the absorbing sets, which play an important role in the long-time behaviour of the solutions to the generalized Rayleigh-Stokes equations.
		\begin{theorem}\label{thm:4.1}
			Suppose that $(F_l)$ and $(F_d)$ are satisfied. Then the solutions of the generalized Rayleigh-Stokes equation (\ref{eq:1.1}) are absorbed by the set
			\[
			\mathcal{B}^{*}= \Bigl\{ u\in L^{p}(\mathbb{R}^{d}):\; \|u(t)\| \le R^{*}\Bigr\}, \qquad
			R^{*}= \Bigl(\frac{\beta}{\alpha}\Bigr)^{\frac{1}{p+\sigma-2}}+1 .
			\]
			Moreover, this set is positively invariant, i.e., if a solution enters \(\mathcal{B}^{*}\) at some time \(t_{0}\), it stays inside \(\mathcal{B}^{*}\) for all \(t\ge t_{0}\).
		\end{theorem}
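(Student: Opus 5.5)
The plan is to follow the energy argument in the proof of the preceding Corollary: first derive a differential inequality for $y(t):=\|u(t)\|^p$, then run a comparison argument. The Corollary gives a global solution of the truncated problem with nonlinearity $f_R$ and shows it never leaves $B_R$ once it starts there. Since $f_R=f$ on $B_R$, this solution coincides with the solution of (\ref{eq:1.1}) under $(F_l)$. I will therefore work with a global solution $u$ regular enough that $t\mapsto\|u(t)\|^p$ is differentiable, taking the regularity asserted in the Corollary.

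The first step is to compute $y'(t)=p\langle \mathcal A u+f(u),J_p(u)\rangle$. Using the dissipativity $\langle \mathcal A u,J_p(u)\rangle\le 0$, obtained in the Corollary through the extended state space $\mathcal X$, and then $(F_d)$, I get
\[
y'(t)\le p\bigl(-\alpha\, y(t)^{\frac{p+\sigma-2}{p}}+\beta\bigr),\qquad t>0 .
\]
Write $r_0=(\beta/\alpha)^{1/(p+\sigma-2)}$, so that $R^*=r_0+1$. Whenever $\|u(t)\|\ge R^*$, the right-hand side is at most
\[
-p\,c_*:=-p\bigl(\alpha (r_0+1)^{p+\sigma-2}-\beta\bigr)<0 .
\]

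The second step is absorption. Suppose $\|u(s)\|>R^*$ on an interval $[0,t]$. Then $y$ decreases at rate at least $pc_*$, so $y(t)\le \|u_0\|^p-pc_*t$. Hence the entry time satisfies $t\le T(\|u_0\|):=\|u_0\|^p/(pc_*)$. This bound is uniform over bounded sets of initial data, which gives the absorbing property of $\mathcal B^*$.

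If I want a sharper statement, I would use the superlinearity coming from $(p+\sigma-2)/p>1$, which holds because $\sigma>2$. Comparing $y$ with the solution of the scalar ODE $z'=-p\alpha z^{q}+p\beta$, where $q=(p+\sigma-2)/p$, shows that the entry time is bounded independently of $\|u_0\|$. I would mention this as a remark rather than rely on it.

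The third step is positive invariance, by contradiction exactly as in the Corollary. Suppose $\|u(t_0)\|\le R^*$ and that $\|u(t_1)\|>R^*$ for some $t_1>t_0$. Let $s_0=\sup\{s\in[t_0,t_1]:\|u(s)\|\le R^*\}$. On $(s_0,t_1]$ we have $y>(R^*)^p$, so the inequality above gives $y'<0$ there. Since $y(s_0)=(R^*)^p$, it follows that $y(t_1)<y(s_0)$, which contradicts $y(t_1)>(R^*)^p$.

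The main obstacle is justifying $y'(t)\le p\langle f(u),J_p(u)\rangle$ in the first place. The operator $\mathcal A$ contains the nonlocal term $\partial_t^{*k}\Delta u$, so the sign of $\langle \mathcal A u,J_p(u)\rangle$ is not pointwise clear. I will invoke the estimate established in the Corollary's proof, which uses the memory variable $\eta(\lambda,t)$ and the dissipativity of $\mathcal A_{\rm ext}$. I will also note that the differentiation requires the $C^1$ regularity stated there; if needed, I would argue on a regularized solution and pass to the limit.
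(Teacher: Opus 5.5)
\textbf{Same approach as the paper.} Your argument follows the paper's proof. You differentiate $y=\|u\|^p$, use $\langle(1+\partial_t^{*k})\Delta u,J_p(u)\rangle\le 0$ together with $(F_d)$ to get $y'\le p(-\alpha y^{q}+\beta)$, and then compare.

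Where you depart from the paper, you improve on it:
\begin{itemize}
\item For absorption, the paper only reaches $y(t)\le y_{\mathrm{eq}}+\varepsilon$ for large $t$, and then silently drops both $\varepsilon$ and the ``$+1$'' in $R^*$. Your margin $c_*=\alpha(r_0+1)^{p+\sigma-2}-\beta>0$ gives a uniform decay rate and an explicit entry time.
\item For invariance, your sup-argument uses the differential inequality directly. The paper instead restarts the equation at $t_0$ and calls it ``autonomous, hence invariant under time translation''. That is not legitimate, because $\partial_t^{*k}$ still sees the history on $[0,t_0]$.
\end{itemize}

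\textbf{The gap.} The step you single out as the main obstacle is a genuine gap, and citing the Corollary does not close it. The paper's proof uses the same step with no further justification, so your proposal is exactly as complete as the paper's, and neither is complete.

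The Corollary's extended-space computation, even if granted, is specialised to $V=(u,0)$, i.e.\ to a vanishing memory variable. That only controls the local operator $\Delta u+\mu([0,\infty))\Delta u$, which moreover needs $k(0^+)=\mu([0,\infty))<\infty$, excluding the unbounded kernels the paper allows. It says nothing about $(1+\partial_t^{*k})\Delta u(t)$ along a trajectory whose memory $\eta^t(\lambda)=\int_0^t e^{-\lambda(t-s)}\Delta u(s)\,ds$ is nonzero.

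Pointwise in time the sign can actually fail. For $p=2$ and $k(t)=e^{-t}$ one has $\partial_t^{*k}\Delta u(t)=\Delta u(t)-\int_0^t e^{-(t-s)}\Delta u(s)\,ds$, hence
\[
\bigl\langle(1+\partial_t^{*k})\Delta u(t),u(t)\bigr\rangle=-2\|\nabla u(t)\|^2+\int_0^t e^{-(t-s)}\langle\nabla u(s),\nabla u(t)\rangle\,ds .
\]
This is positive whenever $\nabla u(t)$ is small compared with, and aligned to, its recent past. That situation can arise once the nonlinearity has rapidly damped a large state.

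Without $y'\le p\langle f(u),J_p(u)\rangle$, neither of your two steps survives: the linear-decay bound on the entry time fails, and so does the ``$y'<0$ on $(s_0,t_1]$'' step in the invariance argument.

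\textbf{What a repair would need.} You have to carry the history along. Work with a Lyapunov functional that contains the memory variable, for instance a suitably weighted Dafermos-type energy for $V=(u,\eta^t)$. Then formulate absorption and invariance for that functional, not for $\|u(t)\|$ alone. Since the future of $\|u(t)\|$ depends on the whole past, positive invariance is naturally a statement about the pair $(u,\eta^t)$.
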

		\begin{proof}
			Consider the evolution of the \(L^{p}\)-norm of the solution. Differentiating with respect to time,
			\[
			\frac{d}{dt}\|u(t)\| ^{p}=p\int_{\mathbb{R}^{d}}|u|^{p-2}u\,\partial_{t}u\,dx .
			\]
			Substituting the equation the first equation in (\ref{eq:1.1}) into this identity, we have
			\[
			\frac{d}{dt}\|u\| ^{p}=p\int_{\mathbb{R}^{d}}|u|^{p-2}u\Bigl[(1+\partial_{t}^{*k})\Delta u+f(u)\Bigr]dx
			\le -\alpha p\,\|u\| ^{p+\sigma-2}+\beta p .
			\]
			Define \(y(t)=\|u(t)\| ^{p}\), we obtain the differential inequality
			\[
			\frac{dy}{dt}+\alpha p\,y^{\frac{p+\sigma-2}{p}}\le\beta p .
			\]
			
			For \(\sigma\ge 2\), set \(\gamma=\frac{p+\sigma-2}{p}\ge 1\) and consider
			\[
			\frac{dy}{dt}\le -\alpha p\,y^{\gamma}+\beta p,
			\]
			the equilibrium \(y_{\text{eq}}=(\frac{\beta}{\alpha})^{1/\gamma}\) satisfies
			\(
			-\alpha p\,y_{\text{eq}}^{\gamma}+\beta p=0 .
			\)
			By the comparison principle, if the initial value satisfies \(y(0)>y_{\text{eq}}\), then \(\frac{dy}{dt}<0\) and \(y(t)\) decreases monotonically toward \(y_{\text{eq}}\); if \(y(0)\le y_{\text{eq}}\), then \(y(t)\le y_{\text{eq}}\) for all later times. Consequently, for sufficiently large \(t\),
			\[
			y(t)\le y_{\text{eq}}+\varepsilon,\qquad\forall\varepsilon>0.
			\]
			Thus the radius of the absorbing set is
			\[
			R^{*}= \Bigl(\frac{\beta}{\alpha}\Bigr)^{\frac{1}{p\gamma}}
			= \Bigl(\frac{\beta}{\alpha}\Bigr)^{\frac{1}{p+\sigma-2}} .
			\]
			
			We now prove that \(\mathcal{B}^{*}\) is a positively invariant absorbing set. Let the solution satisfy \(u(t_{0})\in\mathcal{B}^{*}\) at a time \(t_{0}\ge0\), i.e.,
			\[
			\|u(t_{0})\|\le R^{*}= \Bigl(\frac{\beta}{\alpha}\Bigr)^{\frac{1}{p+\sigma-2}} .
			\]
			Introduce the shifted time variable \(t_{1}=t-t_{0}\ge0\). The equation becomes
			\[
			\partial_{t_{1}}u=\Delta u+\partial_{t_{1}}^{*k} \Delta u+f(u),\qquad u(0)=u(t_{0}),
			\]
			which is autonomous, hence invariant under time translation.
			For \(t_{1}\ge0\) define the energy \(y(t_{1})=\|u(t_{1}+t_{0})\| ^{p}\). The original differential inequality gives
			\[
			\frac{dy}{dt_{1}}\le -\alpha p\,y^{\gamma}+\beta p ,
			\]
			with the initial condition \(y(0)=\|u(t_{0})\| ^{p}\le R_{*}^{\,p}\). Consider the comparison equation
			\[
			\frac{dz}{dt_{1}}= -\alpha p\,z^{\gamma}+\beta p ,\qquad z(0)=R_{*}^{\,p}.
			\]
			Because \(\gamma\ge1\), the function \(-\alpha p z^{\gamma}+\beta p\) is monotone decreasing for \(z\ge0\). Since \(y(0)\le z(0)\), the comparison principle yields \(y(t_{1})\le z(t_{1})\) for all \(t_{1}\ge0\).
			At \(z=R_{*}^{\,p}\),
			\[
			\left.\frac{dz}{dt_{1}}\right|_{z=R_{*}^{\,p}}
			= -\alpha p\,R_{*}^{\,p+\sigma-2}+\beta p .
			\]
			Using the definition \(R_{*}= \bigl(\frac{\beta}{\alpha}\bigr)^{\frac{1}{p+\sigma-2}}\), we obtain \(\frac{dz}{dt_{1}}\big|_{z=R_{*}^{\,p}}=0\). Hence \(z(t_{1})\equiv R_{*}^{\,p}\) and therefore \(y(t_{1})\le R_{*}^{\,p}\) for all \(t_{1}\ge0\). This implies
			\[
			\|u(t)\| \le R_{*}\qquad\text{for all }t\ge t_{0}.
			\]
			The proof is completed.
		\end{proof}
		
	    	From Theorem \ref{thm:4.1}, any solution of the dissipative generalized Rayleigh-Stokes equations (\ref{eq:1.1}) entering into $\mathcal{B^{*}}$ stays there, as does any solution starting in $\mathcal{B^{*}} $.  It follows  that the set $$\mathcal{B}^{*} = \left\{ u\in L^p(\mathbb{{R}}^d) :  \|u(t)\|  \leq R^{*} \right\}, \quad R^{*} = \left(  {\beta}/{\alpha} \right)^{1/{p+\sigma-2}} +1.$$ is a positive invariant absorbing set for the autonomous semi-dynamical system generated by the solution mapping in (\ref{gene-Mild-solution}). In particular, there exists $T_R \geq 0$ such that $ u(t, u_0)  \in \mathcal{B}^*$, i.e., $\|u(t, u_0)\| \leq R^*$ for all $t \geq T_R$ and $\|u_0\| \leq R$.
		
  However, the omega-limit set
			$$\Omega_{\mathcal{B}^{*}} := \left\{y \in \mathcal{B^{*}} : \exists u_{0,n} \in \mathcal{B}^{*}, t_n \to \infty \text{ such that } u(t_n, u_{0,n}) \to y\right\} $$
		    associated with the solutions of the generalized Rayleigh-Stokes equations cannot be considered as the attractor of the semi-dynamical system, because Section \ref{sect:3} shows that the Volterra integral equation (\ref{gene-Mild-solution}),  and the corresponding semigroup structure of semi-dynamical system is defined on the metric space ($\mathcal{C}$, $\rho$), rather than on $L^p(\mathbb{R}^d)$.
Therefore, strictly speaking, $\Omega_{\mathcal{B}^*}$ is not an attractor, it is merely a set that contains all the limiting dynamics of the generalized Rayleigh-Stokes equation in $L^p(\mathbb{R}^d)$. Nonetheless, as will be seen below,  $\Omega_{\mathcal{B}^*}$ represents the observable part of an attractor $\mathscr{D} $ in $\mathcal{C}$ of this semi-dynamical system, and $\mathscr{D} $ is essentially determined by it.

		\subsection{Attractors }
		The theory of autonomous semi-dynamical systems \cite{Hale1988,Kloeden2021} implies the existence of a global attractor of a semi-dynamical system under appropriate assumptions.
		
		\begin{theorem}\label{attractor}
			Suppose that the semi-dynamical system $\{\phi_t,t\in\mathbb{R}^+\}$ on a Banach space  $\mathcal{Y}$ has a closed and bounded positively invariant absorbing set B in $\mathcal{Y}$ and is asymptotically compact.Then the semi-dynamical system $\{ \phi _t, t\in \mathbb{R} ^+ \} $ has a global attractor given by
			
			$$\mathcal{A}=\bigcap_{t\geq0}\phi_t(\mathcal{B}).$$
		\end{theorem}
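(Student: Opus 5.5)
The argument is the standard omega-limit construction for autonomous semi-dynamical systems, as in Hale and Kloeden–Rasmussen. I would show three things about $\mathcal{A}=\bigcap_{t\ge0}\phi_t(\mathcal{B})$: it is nonempty and compact, it is invariant, and it attracts every bounded set. Two standing facts are used throughout. Positive invariance, $\phi_t(\mathcal{B})\subset\mathcal{B}$, together with the semigroup law gives $\phi_{t+s}(\mathcal{B})=\phi_t(\phi_s(\mathcal{B}))\subset\phi_t(\mathcal{B})$, so the family $\{\phi_t(\mathcal{B})\}$ is nested decreasing in $t$. Asymptotic compactness means that whenever $(x_n)$ is bounded and $t_n\to\infty$, the sequence $(\phi_{t_n}x_n)$ has a convergent subsequence.

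\textbf{Identification and compactness.} First I would show that $\mathcal{A}$ equals the omega-limit set
$$\omega(\mathcal{B})=\{y:\exists\,x_n\in\mathcal{B},\ t_n\to\infty,\ \phi_{t_n}x_n\to y\}.$$
Asymptotic compactness applied to any sequence $\phi_{t_n}x_n$ with $x_n\in\mathcal{B}$ shows that $\omega(\mathcal{B})$ is nonempty. A diagonal argument shows it is closed. Any sequence in $\omega(\mathcal{B})$ can be approximated by points $\phi_{t_n}x_n$ with $t_n\to\infty$, and asymptotic compactness then makes $\omega(\mathcal{B})$ compact. For the inclusion $\omega(\mathcal{B})\subset\phi_t(\mathcal{B})$ for each fixed $t$, write $\phi_{t_n}x_n=\phi_t(\phi_{t_n-t}x_n)$. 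Asymptotic compactness gives, along a subsequence, $\phi_{t_n-t}x_n\to z$, and $z\in\mathcal{B}$ because $\mathcal{B}$ is closed and positively invariant. Continuity of $\phi_t$ then gives $y=\phi_t z$.

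\textbf{Closures and the reverse inclusion.} There is a subtlety here. Since $\phi_t(\mathcal{B})$ need not be closed, I expect to have to work with the closures $\overline{\phi_t(\mathcal{B})}$ and show $\bigcap_t\overline{\phi_t(\mathcal{B})}=\omega(\mathcal{B})$; I would then read the statement's intersection as this closed intersection. For the reverse inclusion, take $y$ in every closure. For each $n$ pick $x_n\in\mathcal{B}$ with $\|\phi_n x_n-y\|<1/n$; this puts $y$ in $\omega(\mathcal{B})$.

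\textbf{Invariance and attraction.} Invariance, $\phi_t\mathcal{A}=\mathcal{A}$, follows from the same shifting argument. The inclusion $\phi_t\mathcal{A}\subset\mathcal{A}$ holds because $\phi_t\phi_{t_n}x_n=\phi_{t+t_n}x_n$ and $\phi_t$ is continuous. The reverse inclusion is exactly the representation $y=\phi_t z$ with $z\in\omega(\mathcal{B})$ obtained above.

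Attraction is the main step, and I would prove it by contradiction. Let $D$ be bounded. Because $\mathcal{B}$ is absorbing, there is $T_D$ with $\phi_t D\subset\mathcal{B}$ for all $t\ge T_D$. Suppose $\operatorname{dist}(\phi_{t_n}D,\mathcal{A})\ge\varepsilon$ along some $t_n\to\infty$. Choose $d_n\in D$ with $\operatorname{dist}(\phi_{t_n}d_n,\mathcal{A})\ge\varepsilon/2$, and write $\phi_{t_n}d_n=\phi_{t_n-T_D}(\phi_{T_D}d_n)$ with $\phi_{T_D}d_n\in\mathcal{B}$. Asymptotic compactness yields a limit point of $\phi_{t_n}d_n$, and by construction this limit lies in $\omega(\mathcal{B})=\mathcal{A}$, contradicting the distance bound.

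Minimality follows from compactness and invariance: any closed attracting set $K$ attracts the compact invariant set $\mathcal{A}$, so $\mathcal{A}=\phi_t\mathcal{A}\to K$ and hence $\mathcal{A}\subset K$.

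The real obstacle is not this abstract argument but verifying its hypotheses later, for $T_t$ on $\mathcal{C}_\alpha$: continuity of each $\phi_t$ and asymptotic compactness. Those are handled with finite-dimensional approximation and the measure of noncompactness, as the introduction announces.
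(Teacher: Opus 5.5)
The paper gives no proof of this statement. It quotes the result from the abstract theory of autonomous semi-dynamical systems (Hale; Kloeden and coauthors), and your argument is the standard $\omega$-limit proof from that literature. It is correct.

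One correction: you do not need to reinterpret the formula with closures. Your first step already shows that every $y\in\omega(\mathcal{B})$ has the form $y=\phi_t z$ with $z\in\omega(\mathcal{B})\subset\mathcal{B}$, so $\omega(\mathcal{B})\subset\phi_t(\mathcal{B})$ for each $t$. Your second step gives $\bigcap_{t}\overline{\phi_t(\mathcal{B})}\subset\omega(\mathcal{B})$. Hence
$$\omega(\mathcal{B})\subset\bigcap_{t\ge0}\phi_t(\mathcal{B})\subset\bigcap_{t\ge0}\overline{\phi_t(\mathcal{B})}\subset\omega(\mathcal{B}),$$
and $\mathcal{A}=\bigcap_{t\ge0}\phi_t(\mathcal{B})$ holds exactly as stated.

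Your closing remark also misdescribes how the paper uses the result. The paper says explicitly that Theorem~\ref{attractor} cannot be applied to $\{T_t\}$. The reason is that absorption and asymptotic compactness are only established for initial functions $\varphi_{u_0}=S(\cdot)u_0$ with $u_0$ ranging over bounded sets. Theorem~\ref{thm:4.3} is obtained instead by running the same $\omega$-limit construction restricted to the families $\varphi_D$; the paper does not verify the hypotheses of this theorem for $T_t$ on $\mathcal{C}_\alpha$.
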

		
		Unfortunately, this theorem cannot be applied here to our given semigroup $\{T_{t}\}_{t \in \mathbb{R}^{+}}$, because the attraction property is restricted to the initial functions $\varphi(t) \equiv S(t){u_0}$ corresponding to initial values $u_0 \in L^p(\mathbb{R}^d)$. We observe that another difficulty lies in how to apply the dissipativity condition ($F_d$) to the vector field $f$ within the integral equation (\ref{semigroup}) for establishing the existence of an absorbing set in the space $\mathcal{C}$.
		
	In fact, by restricting $\varphi(t) \equiv S(t){u_0}$ corresponding to $u_0 \in L^p(\mathbb{R}^d)$, the dissipativity condition ($F_d$) can be used in the case $\theta = 0$, which corresponds to the generalized Rayleigh-Stokes equation (\ref{eq:1.1}) with initial condition $u(0) = u_0$, such that for all $R \geq R^*$, we have
		\begin{equation}\label{bounded}
			u(t,u_0) \in \mathcal{B}^* \Leftrightarrow \|u\|  \leq \left( \frac{\beta}{\alpha} \right)^{1/\sigma} =: R^*, ~\quad t \geq T_R, ~\|u_0\| \leq R.
		\end{equation}
		These bounds can then be used to estimate the integrals for the integral equation (\ref{Mild-solution}) with $\theta>0.$ Essentially, like \cite{Doan2025}, the integral equation (\ref{Mild-solution}) have a skew-product like structure with the solution of
		\begin{equation*}
			u(t,u_0)=S(t)u_0+\int_0^tS(t-s) f({u(s ,{ u_0})}) \mathrm{d}s.
		\end{equation*}
		inserted into
		\begin{equation*}
			(T_t \varphi_{u_0})(\theta) =S(t+\theta)u_0+\int_0^tS(t+\theta-s) f(s,u_{ \varphi_{u_0}})) \mathrm{d}s, \quad \theta > 0,
		\end{equation*}
	for $\varphi_{u_0}:=S(\cdot)u_0$. Note that for $\theta = 0$
		$$(T_t \varphi_{u_0})(0) = u(t,u_0), \quad t \geq 0.$$
		
In the sequel, it will be revealed in the proof below that a subspace of the space $\mathcal{C}$ will be employed together with a weighted norm to characterize uniform convergence on bounded intervals. In particular, we consider the weighted norm on $\mathcal{C}([0, \infty), L^p(\mathbb{R}^d))$ defined by
		$$
		\|\phi\|_\alpha := \|\phi(0)\| + \sum_{N=1}^\infty \frac{1}{2^N N^\alpha} \|\phi\|_N,~~~~
		\text{where}~~
		\|\phi\|_N := \sup_{t \in [N^{-1}, N]} \|\phi(t)\| , \quad N = 1, 2, \cdots.$$
Let $\mathcal{C}_\alpha$ be the subspace of $\mathcal{C}([0, \infty), L^p(\mathbb{R}^d))$ consisting of functions $\phi$ with $\|\phi\|_\alpha < \infty$. Then $(\mathcal{C}_\alpha,\|\cdot\|_\alpha)$ is a Banach space and $\{T_t\}_{t\in\mathbb{R}^+}$ forms a semi-group on $\mathcal{C}_\alpha.$
		
		\begin{theorem}\label{thm:4.3}
			Suppose that the vector field $f$ fulfills ($F_l$) and ($F_d$). Then the semigroup $\{T_t\}_{t\in\mathbb{R}^+}$ on the space $\mathcal{C}_\alpha$ corresponding to the integral equation (\ref{Mild-solution}) has an attracting set $\mathfrak{A}\subset\mathcal{C}_\alpha$, which is closed, bounded and invariant, and attracts bounded subsets of initial value
			functions $\varphi_{u_0}\equiv S(\cdot){u_0}$ corresponding to initial values $u_0 \in  L^p(\mathbb{R}^{d})$. In particular
			$$
			\mathfrak{A} = \overline{\bigcup_{\substack{ D \subset \mathbb{R}^d \\ \text{bounded}}} \bigcap_{t \geq s } \bigcup_{s \geq 0} T_s(\varphi_D)},$$
			where $\varphi_D := \{\varphi_{u_0} \in \mathcal{C}_\alpha : u_0 \in D\}$.
		\end{theorem}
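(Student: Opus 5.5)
The plan is to follow the skew-product strategy of \cite{Doan2025} and reduce everything to the $L^p$ absorbing set of Theorem \ref{thm:4.1}.

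\textbf{Step 1: absorbing set in $\mathcal{C}_\alpha$.} Fix a bounded $D$ with $\|u_0\|\le R$ for $u_0\in D$. By Theorem \ref{thm:4.1} there is $T_R$ with $\|u(t,u_0)\|\le R^*$ for $t\ge T_R$. Before that time, Lemma \ref{le:3.1} and the truncation argument (Corollary with $f_R$) give $\|u(t,u_0)\|\le C_R$ on $[0,T_R]$. Writing
\[
(T_t\varphi_{u_0})(\theta)=S(t+\theta)u_0+\int_0^t S(t+\theta-s)f(u(s,u_0))\,ds ,
\]
we bound the first term by $\|u_0\|$, using Lemma \ref{le:3.1}. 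We split the integral at $T_R$ and use $\|f(u)\|\le L(R^*)\|u\|$ on $B_{R^*}$ from $(F_l)$ with $f(0)=0$.

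The naive bound grows linearly in $t$. To avoid this, I would instead use the identity
\[
(T_t\varphi_{u_0})(\theta)=u(t+\theta,u_0)-\int_t^{t+\theta}S(t+\theta-s)f(u(s,u_0))\,ds ,
\]
which follows directly from the mild formulation. For $t\ge T_R$ this gives
\[
\|(T_t\varphi_{u_0})(\theta)\|\le R^*+\theta L(R^*)R^* .
\]
Multiplying by the weights $2^{-N}N^{-\alpha}$ and summing over $N$ (with $\sup_{[N^{-1},N]}\theta\le N$), we get a bound $\|T_t\varphi_{u_0}\|_\alpha\le \rho^*$ uniformly in $u_0\in D$, $t\ge T_R$. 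So the closed ball $\mathcal{B}_{\rho^*}$ in $\mathcal{C}_\alpha$ absorbs the images of $\varphi_D$.

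\textbf{Step 2: asymptotic compactness.} Take $t_n\to\infty$ and $u_{0,n}\in D$. I will show that $\{T_{t_n}\varphi_{u_{0,n}}\}$ is relatively compact in $\mathcal{C}_\alpha$. Since the tail $\sum_{N>N_0}$ of the norm is uniformly small by Step 1, it suffices to get compactness in $C([N_0^{-1},N_0];L^p)$ for each $N_0$, plus convergence at $\theta=0$.

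Equicontinuity in $\theta$ follows from the strong continuity of $S(\cdot)$ (Lemma \ref{le:3.1}) together with the uniform bound on $f(u)$. The main obstacle is pointwise relative compactness in $L^p(\mathbb{R}^d)$, since there is no Ascoli theorem on the unbounded domain. Here I would use a Kuratowski measure of noncompactness $\chi$. I would try to show $\chi\{u(t_n,u_{0,n})\}=0$ by splitting $S(t)=-\int_0^\infty T(\tau)\,d_\tau\omega(t,\tau)$:
\begin{itemize}
\item The part with $\tau\ge\epsilon$ uses the smoothing of the heat semigroup.
\item The part $\tau\in[0,\epsilon)$ has small mass $\omega(t,\epsilon)$-type weight for $t$ large, which should follow from tightness.
\item Spatial tails at infinity are handled by a finite-dimensional approximation: cutoff to a ball and projection.
\end{itemize}
Combined with the Lipschitz bound on $f$ on $B_{R^*}$, a Gronwall-type estimate for $\chi$ should force $\chi=0$. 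This step is where I expect the most work and possibly extra assumptions.

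\textbf{Step 3: construction of $\mathfrak{A}$.} For each bounded $D$ I form the omega limit set $\omega(\varphi_D)=\bigcap_{s\ge0}\overline{\bigcup_{t\ge s}T_t\varphi_D}$. By Step 2 it is nonempty and compact. It is invariant by continuity of $T_t$ in $\mathcal{C}_\alpha$: use the Lipschitz dependence from Theorem \ref{thm:1.1}, applied with $f_R$, together with the semigroup property. It attracts $\varphi_D$ by the standard contradiction argument using asymptotic compactness. Since it lies in $\mathcal{B}_{\rho^*}$, with $\rho^*$ independent of $D$, the closure of the union over bounded $D$ is closed, bounded and invariant, and attracts every $\varphi_D$. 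This gives the formula for $\mathfrak{A}$ in the statement.
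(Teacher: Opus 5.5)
Your Step~1 is fine and is essentially the paper's Lemma~\ref{lemma:4.3} together with its weighted-norm estimate. The identity $(T_t\varphi_{u_0})(\theta)=u(t+\theta,u_0)-\int_t^{t+\theta}S(t+\theta-s)f(u(s,u_0))\,ds$ gives growth at most linear in $\theta$, and the weights $2^{-N}N^{-\alpha}$ absorb it. The genuine gap is Step~2, which carries the whole content of the theorem, and the mechanisms you sketch do not deliver it.

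\emph{Equicontinuity.} Strong continuity of $S(\cdot)$ and a bound $\|f(u(s,u_{0,n}))\|\le B$ do not control $\int_0^{t_n}[S(t_n+\theta_2-s)-S(t_n+\theta_1-s)]f(u(s,u_{0,n}))\,ds$ uniformly in $n$. The integrands vary with $n$ and the intervals have length $t_n\to\infty$. Even the operator bound $\|\tfrac{d}{dt}S(t)\|\le C_3/t$ of Lemma~\ref{lemma:4.4} only gives $B\int_0^{t_n}\min\{2,C_3h/r\}\,dr\sim C_3Bh\log(t_n/h)$ with $h=|\theta_2-\theta_1|$, which is not uniform. The paper uses, on top of Lemmas~\ref{lemma:4.1}, \ref{lemma:4.4} and \ref{lemma:4.5}, the integrability bound $\int_0^\infty\|f(u(s,u_0))\|\,ds\le K_R$ from Lemma~\ref{lemma:4.2}; no sup bound on $f$ can replace it.

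\emph{Pointwise compactness in $L^p(\mathbb{R}^d)$.} Smoothing, or the gradient bound $\|\nabla S(t)\|\le C_5[(1*m)(t)]^{-1/2}$ of Lemmas~\ref{lemma:4.8}--\ref{lemma:4.9}, combined with cutoff and a finite-rank projection, gives compactness only on balls $Q_{2R}$. It says nothing about $\|\cdot\|_{L^p(|x|>R)}$, so tails cannot be handled that way. The paper controls tails separately, via Gaussian bounds (Lemma~\ref{lemma:4.6}) and an explicit uniform-decay hypothesis on the initial data (Lemma~\ref{lemma:4.7}). Some such hypothesis is unavoidable. $S(t)$ commutes with translations, and so does, for example, $f(u)=(1-\|u\|^{\sigma-2})u$, which satisfies $(F_l)$ and $(F_d)$. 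Then $u(t,v(\cdot-x))=u(t,v)(\cdot-x)$, and for the bounded family of data $v(\cdot-x_n)$ with suitably chosen $|x_n|\to\infty$, asymptotic compactness fails as soon as $u(t,v)\not\to0$.

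\emph{Gronwall for the Kuratowski measure} $\kappa$ (your $\chi$). On $L^p(\mathbb{R}^d)$, $S(r)$ is a non-compact contraction with no decay in operator norm. A Lipschitz map only gives $\kappa(f(B))\le L(R^*)\kappa(B)$, and Gronwall over $[0,t_n]$ produces a factor $e^{L(R^*)t_n}$ that blows up. $(F_d)$ is a norm inequality and supplies no $\kappa$-contraction, so nothing forces $\kappa=0$. (Also, tightness of $\{d_\tau\omega(t,\cdot)\}$ controls mass near $\tau=\infty$, not near $\tau=0$.)

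The paper instead proves precompactness of $\{(T_{t_n}\varphi_{u_{0,n}})(\theta)\}_n$ directly. It combines local $W^{1,p}(Q_{2R})$-compactness, small tails and equicontinuity in a Kuratowski estimate. It then applies Arzel\`a--Ascoli on $[N^{-1},N]$, a diagonal argument, and a H\"older estimate for continuity at $\theta=0$. Your caveat about extra assumptions is justified: the uniform-decay hypothesis and the bound $K_R$ go beyond $(F_l)$ and $(F_d)$. The paper obtains $K_R$ by assuming convergence to a unique, locally exponentially stable equilibrium.

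Finally, in Step~3, continuity of $T_t$ in $\|\cdot\|_\alpha$ does not follow from Theorem~\ref{thm:1.1}. The norm $\|\cdot\|_\alpha$ does not control $\sup_{(0,1]}\|\phi(\theta)\|$: a continuous spike of height $H$ supported in $[\theta_0/2,\theta_0]$ has $\|\cdot\|_\alpha$-norm at most $2H2^{-1/\theta_0}$. Yet $u_\varphi$ on $[0,t]$ depends on $\varphi$ on all of $[0,t]$. You can only use continuity on equi-H\"older families like those produced in Step~2, where $\|\cdot\|_\alpha$-convergence upgrades to locally uniform convergence.
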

\begin{remark}		
It is worth noting that this definition resembles that of Theorem \ref{attractor}; however, the points of omega limit sets lie inside on $\mathcal{C}_\alpha$ rather that the usual space $L^p(\mathbb R^d)$, in which the omega limit sets shares an equivalent characterization with attractors.
From this perspective, this definition of $\mathfrak{A}$
is not an attractor in the classical sense, the same applies to the Caputo attractor introduced in \cite{Doan2024,Doan2025}. In this way, we call that the set $\mathfrak{A}$ is called a global generalized attractor. Clearly, $\mathfrak{A}$ is a subset of the bounded absorbing set $\mathfrak{B}^{*}$.	
\end{remark}				
The proof of the existence of the global generalized attractor in Theorem \ref{thm:4.3} will be given in the remainder of this paper. Now, let $u(t, u_0)$ be the solution of the generalized Rayleigh-Stokes equation (\ref{eq:1.1}) satisfying the dissipativity condition ($F_d$), with initial condition $u(0, u_0) = u_0$. This solution satisfies the bounded (\ref{bounded}), and
		$$B_R := \sup_{t \geq 0,\; \|u_0\| \leq R} \|u(t, u_0)\|  < \infty, \qquad B_R^f := \sup_{\|u\| \leq B_R} \|f(u)\| < \infty,$$
		where the continuity of the vector field $f$ has been used in the second bound. These hold for $R = R^*$ provided that $t \geq T_R$.
		
		It turns out that the semigroup $\{T_t\}_{t \in \mathbb{R}^+}$ is asymptotically compact, and the closed bounded subset $\mathfrak{B}^{*}$ of $\mathcal{C}_\alpha$ defined by
		$$\mathfrak{B}^{*} := \left\{ \chi \in \mathcal{C}_\alpha : \|\chi\|_\alpha \leq 2R^* + C_R B_{R^*}^f =: \widehat{R}_* \right\}$$
		absorbs, under the operator $T_t$ for $t \geq T_R$, the bounded  initial-data functions with $\|\varphi_{u_0}\|_\alpha \leq \|u_0\| \leq R$. Note that the absorbing set $\mathcal{B}^*$ in $L^p(\mathbb{R}^d)$ and the omega-limit set $\Omega_{\mathcal{B}^*}$ satisfy
		$$
		\mathcal{B}^* = \left\{ \chi(0) \in L^p(\mathbb{{R}}^d) : \chi \in \mathfrak{B}^{*} \right\}, \quad \Omega_{\mathcal{B}^*} = \left\{ \chi(0) \in L^p(\mathbb{{R}}^d) : \chi \in\mathfrak{A} \right\}.$$

		\subsection{Proof of Theorem \ref{thm:4.3}}
		
	We divide the proof of  this theorem into several basic lemmas.
		\begin{lemma}\label{lemma:4.1}
			The resolvent family $\{S(t)\}_{t \geq 0}$  defined by (\ref{Mild-solution}):
			$$
			S(t)v  = -\int_0^{\infty} T(\tau)  v  \, d_\tau\omega(t, \tau).
			$$
			Then, for any $t \geq T_R$ and $\theta > 0$, there exist a constant $C_1 > 0$ and an exponent $\alpha \in (0,1)$ such that
			$$
			\|S(t+\theta)-S(t)\| \leq C_1\theta^{\alpha}.
			$$
		\end{lemma}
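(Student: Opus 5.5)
The plan is to reduce the operator estimate to a scalar estimate on the propagation function, using the subordination representation. From $S(t)v=-\int_0^\infty T(\tau)v\,d_\tau\omega(t,\tau)$ and $\|T(\tau)\|\le 1$, we get
\[
\|S(t+\theta)-S(t)\|\le \int_0^\infty \bigl|d_\tau\bigl(\omega(t+\theta,\tau)-\omega(t,\tau)\bigr)\bigr| =:\|\omega(t+\theta,\cdot)-\omega(t,\cdot)\|_{TV},
\]
exactly as in the proof of Lemma \ref{le:3.2}. So it suffices to show that $t\mapsto \omega(t,\cdot)$ is H\"older continuous in total variation, uniformly for $t\ge T_R$.

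An alternative route, which avoids total variation of differences of measures, is to work directly with the resolvent. Write
\[
S(t+\theta)-S(t)=\int_t^{t+\theta}\dot S(s)\,ds .
\]
The derivative $\dot S$ is obtained from the Laplace symbol $\widehat{\dot S}(\lambda)=(1+\hat m(\lambda)A)^{-1}-I=-\hat m A(1+\hat m A)^{-1}$. Since $m=1+k$ with $k\in\mathcal{CMF}$, the kernel $m$ is completely positive and (assuming, as is natural here, $1$-regularity) $\hat m$ extends to a sector. Then $1+\hat m(\lambda)A$ is invertible with $\|\hat m A(1+\hat mA)^{-1}\|\le M+1$, using the sectoriality $\|(\mu+A)^{-1}\|\le M|\mu|^{-1}$ with $\mu=1/\hat m(\lambda)$.

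The key step is a pointwise bound of the form $\|\dot S(s)\|\le C s^{\alpha-1}$ for $s>0$ and some $\alpha\in(0,1)$. This is obtained by deforming the inverse Laplace contour into a Hankel path $\Gamma_s$ of radius $1/s$. On that path, $\|\lambda\widehat{S}(\lambda)-I\|\le C$ and $\lambda^{-1}\cdot(\text{bounded})$ integrated over $\Gamma_s$ gives the required power. If this only yields the borderline $C s^{-1}$, I would instead use interpolation. We have $\|S(t+\theta)-S(t)\|\le 2$ from Lemma \ref{le:3.1}, and the $C s^{-1}$ bound gives $\|S(t+\theta)-S(t)\|\le C\theta/t\le C\theta/T_R$ for $t\ge T_R$. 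Taking the geometric mean, $\min\{2,C\theta/T_R\}\le C_1\theta^{\alpha}$ for any $\alpha\in(0,1)$, with $C_1=\max\{2,C/T_R\}$. This interpolation is where the restriction $t\ge T_R>0$ enters. It holds for $\theta\le 1$ directly; for $\theta>1$ one uses $2\le 2\theta^\alpha$.

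So the concrete order is as follows. First, establish analyticity and sectorial extension of $\widehat S$ using $k\in\mathcal{CMF}$ and the resolvent bound on $A$. Second, derive $\|\dot S(s)\|\le C/s$ by contour integration. Third, integrate from $t$ to $t+\theta$ and interpolate with the uniform bound $\|S(\cdot)\|\le1$ to get $C_1\theta^\alpha$.

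The main obstacle is the second step. Justifying the contour deformation requires $\hat m$ to have a holomorphic extension to a sector with $\arg\hat m(\lambda)$ controlled, so that $1/\hat m(\lambda)$ stays in the resolvent region of $-A$. For $m=1+k$ with $k\in\mathcal{CMF}$, $\hat m(\lambda)=\lambda^{-1}+\hat k(\lambda)$ is a Stieltjes function, so $\arg\hat m(\lambda)\in[-|\arg\lambda|,0]$. This keeps $1/\hat m$ in a sector of angle at most $\pi/2+\epsilon$, and that should make the argument go through.
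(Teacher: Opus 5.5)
Your proposal is correct, but it reaches the estimate by a different decomposition than the paper. The paper never differentiates $S$. It writes the difference directly as
\[
S(t+\theta)-S(t)=\frac{1}{2\pi i}\int_{\varGamma}e^{\lambda t}\,\frac{\varphi^m(\lambda)}{\lambda}\,(\varphi^m(\lambda)I+A)^{-1}\,(e^{\lambda\theta}-1)\,d\lambda ,
\]
then inserts $|e^{z}-1|\le C_\alpha|z|^\alpha$ for $\mathrm{Re}\,z\le 0$. Evaluating $\int_{\varGamma}|e^{\lambda t}||\lambda|^{\alpha-1}|d\lambda|$ gives $\|S(t+\theta)-S(t)\|\le C(\theta/t)^\alpha$, and $t\ge T_R$ finishes.

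You instead prove $\|\dot S(s)\|\le C/s$, which is exactly the paper's later Lemma \ref{lemma:4.4} via the same contour. You integrate it to $C\log(1+\theta/t)\le C\theta/T_R$ and interpolate with $\|S(t+\theta)-S(t)\|\le 2$ from Lemma \ref{le:3.1}. Your $C_1=\max\{2,C/T_R\}$ works in both regimes $\theta\le 1$ and $\theta>1$.

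The paper's route gets $\theta^\alpha t^{-\alpha}$ in one computation without needing differentiability of $S$. Yours makes the lemma a corollary of the derivative bound plus contractivity. It also gives the stronger logarithmic (locally Lipschitz) bound for $t\ge T_R$, and shows directly that every $\alpha\in(0,1)$ works.

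Your Hankel contour of radius $1/s$ avoids $\lambda=0$. There $\|\widehat S(\lambda)\|$ behaves like $|\lambda|^{-1}$, because $0\in\sigma(A)$. The paper's rays pass through the origin, which is harmless only for the difference integrand, since $e^{\lambda\theta}-1=O(\lambda)$.

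Your sector justification is also the right one. Since $\widehat m=\lambda^{-1}+\widehat k$ is a Stieltjes function, $|\arg\varphi^m(\lambda)|\le|\arg\lambda|<\pi$, and $A=-\Delta$ is sectorial of angle $0$. This makes your extra $1$-regularity assumption unnecessary. The paper's claim $|\arg\varphi^m(\lambda)|<\pi/2$ on $\varGamma$ fails in general (for bounded $k$, $\varphi^m(\lambda)\approx\lambda/(1+k(0))$ for large $|\lambda|$), but it is not needed.

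Two small cleanups. First, drop the hoped-for bound $\|\dot S(s)\|\le Cs^{\alpha-1}$ for all $s>0$. It cannot hold as $s\to0$: already for constant $k$, $S(s)=e^{(1+k)s\Delta}$ and $\|\dot S(s)\|=c/s$ by scaling. Your fallback makes it unnecessary anyway. Second, the inclusion $\arg\widehat m(\lambda)\in[-|\arg\lambda|,0]$ holds for $\mathrm{Im}\,\lambda\ge 0$, with the mirror interval in the lower half-plane.
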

		\begin{proof}
	From the  homogeneous case of equation (\ref{equivalent}), the Laplace transform of $S(t)$ is given by
			$$
			\widehat{S}(\lambda) = \frac{1}{\lambda} (I + \widehat{m}(\lambda) A)^{-1}
			= \frac{1}{\lambda \widehat{m}(\lambda)} \left( \frac{1}{\widehat{m}(\lambda)} I +A \right)^{-1}.
			$$
			Substituting $\varphi^m(\lambda) = \frac{1}{\widehat{m}(\lambda)}$, we obtain
			$$\widehat{S}(\lambda)=\frac{1}{\lambda} \varphi^m(\lambda)(\varphi^m(\lambda)I+A)^{-1}
.$$
Therefore, by the inverse Laplace transform, we have
			$$
			S(t) = \frac{1}{2\pi i} \int_{\varGamma} e^{\lambda t} \frac{\varphi^m(\lambda)}{\lambda} (\varphi^m(\lambda) I - \Delta)^{-1} \, d\lambda,
			$$
			where the integration path $\varGamma$ consists of two rays: $\varGamma_1 = \{se^{i\phi}: s \text{ from } 0 \text{ to } \infty\}$ and $\varGamma_2 = \{se^{-i\phi}: s \text{ from } \infty \text{ to } 0\}$, with some $\phi \in (\pi/2, \pi)$ satisfying $|\arg \vartheta(\lambda)| \leq \pi - \delta$ for $\lambda \in \varGamma$, where $\delta > \pi/2$.
 Moreover, $|e^{\lambda t}| = e^{st\cos\phi}$ on $\varGamma$, which decays exponentially since $\cos\phi < 0$.

			Thus, we have
			$$
			S(t+\theta) - S(t) = \frac{1}{2\pi i} \int_{\varGamma} e^{\lambda t} \frac{\varphi^m(\lambda)}{\lambda} (\varphi^m(\lambda) I +A)^{-1} (e^{\lambda \theta} - 1) \, d\lambda.
			$$
			By the theory of analytic semigroups, there exists a constant  $C > 0 $  such that
			$$
			\| (\varphi^m(\lambda) I +A)^{-1} \|  \leq \frac{C}{|\varphi^m(\lambda)|}.
			$$
			for $ \lambda \in \varGamma $.Therefore,
			$$\begin{aligned}
				\|S(t+\theta) - S(t)\| & \leq \frac{C}{2\pi} \int_{\varGamma} |e^{\lambda t}| \left| \frac{\varphi^m(\lambda)}{\lambda} \right| \frac{1}{|\varphi^m(\lambda)|} |e^{\lambda \theta} - 1| \, |d\lambda| \\&= \frac{C}{2\pi} \int_{\varGamma} |e^{\lambda t}| \frac{1}{|\lambda|} |e^{\lambda \theta} - 1| \, |d\lambda|.
			\end{aligned}
			$$
The last integral term converges since it decays exponentially as $s \to \infty$ and the integrand is bounded at $s = 0$. Furthermore, for any \( \alpha \in (0,1) \), let \( z=\lambda\theta \), so that \( |\lambda|\theta=|z| \).  For \( |z| \leq 1 \), using the Taylor expansion, we have
			$$
			|e^z - 1| \leq |z| + \frac{|z|^2}{2!} + \frac{|z|^3}{3!} + \cdots \leq |z|\left(1 + \frac{1}{2!} + \frac{1}{3!} + \cdots\right) = |z|(e - 1) \leq e|z|.
			$$
			Since \( \alpha \in (0, 1) \) and \( |z| \leq 1 \), we have \( |z| \leq |z|^\alpha \); therefore,
			$$
			|e^z - 1| \leq e|z| \leq e|z|^\alpha.
			$$
			For $ |z| > 1 $, on the contour $\varGamma $ we have $ \Re(z)=s\theta\cos\phi<0 $, hence $ |e^z|=e^{\Re(z)}\leq 1 $, and consequently $ |e^z-1|\leq|e^z|+1\leq 2 $. At the same time, because $|z|>1 $, we have $|z|^\alpha > 1$, so $ |e^z-1| \leq 2 \leq 2|z|^\alpha $. Thus we obtain the estimate
			$$
			|e^{\lambda \theta} - 1| \leq C_{\alpha} (|\lambda| \theta)^{\alpha},
			$$
			where $ C_{\alpha} > 0 $ is a constant. Consequently,
			$$
			\|S(t+\theta) - S(t)\|  \leq \frac{C C_{\alpha} \theta^{\alpha}}{2\pi} \int_{\varGamma} |e^{\lambda t}| |\lambda|^{{\alpha}-1} \, |d\lambda|.
			$$
			
			On $ \varGamma_1 $, with $ \lambda = s e^{i\phi}$ and $ |d\lambda| = ds $, we have
			$$
			\int_{\varGamma_1} |e^{\lambda t}| |\lambda|^{{\alpha}-1} \, |d\lambda| = \int_0^\infty e^{s t \cos \phi} s^{{\alpha}-1} \, ds.
			$$
			Setting $\sigma = -t \cos \phi > 0 $, we obtain
			$$
			\int_0^\infty e^{s t \cos \phi} s^{{\alpha}-1} \, ds = \int_0^\infty e^{-\sigma s} s^{{\alpha}-1} \, ds = \frac{\Gamma({\alpha})}{\sigma^{\alpha}} = \frac{\Gamma({\alpha})}{(t |\cos \phi|)^{\alpha}},
			$$
			where $ \Gamma({\alpha}) $ is the Gamma function. The integral over $\varGamma_2 $ is identical, hence
			$$
			\int_{\varGamma} |e^{\lambda t}| |\lambda|^{{\alpha}-1} \, |d\lambda| = 2 \frac{\Gamma({\alpha})}{(t |\cos \phi|)^{\alpha}}.
			$$
			Therefore, we have
			$$
			\|S(t+\theta)-S(t)\| \leq\frac {C C_{\alpha} \theta^{\alpha}}{2\pi}\cdot 2\frac{\Gamma({\alpha})}{(t|\cos\phi|)^{\alpha}}=\frac{CC_{\alpha}\Gamma({\alpha})}{\pi|\cos\phi|^\alpha}\frac{\theta^{\alpha}}{t^{\alpha}}.
			$$
			
			Since $ t \geq T_R $, we have$ \frac{1}{t^{\alpha}}\leq\frac{1}{T_R^{\alpha}} $, and thus
			$$
			\|S(t+\theta)-S(t)\| \leq C_1\theta^{\alpha},
			$$
			where $C_1=\frac{CC_\alpha\Gamma(\alpha)}{\pi|\cos\phi|^\alpha T_R^{\alpha}} $. This completes the proof.
		\end{proof}

		We restate below the H\"{o}lder continuity of solutions in the dissipative setting under consideration.
		\begin{lemma}\label{lemma:4.2} The solution of the integral equation (\ref{eq:1.1}) is H\"{o}lder continuous, i.e.,
			$$
			\| u(t+\theta, u_0) - u(t, u_0)\|  \leq C_R B_{R^*}^f \, \theta^{\alpha},~~\theta>0,
			$$
			where $\| u_0\| \leq R$ and $t \geq T_R$,  exponent $\alpha$ is defined in Lemma \ref{lemma:4.1}.
		\end{lemma}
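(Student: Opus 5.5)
We work directly from the mild-solution representation \eqref{Mild-solution} and split the increment into three pieces:
\begin{align*}
u(t+\theta,u_0)-u(t,u_0) &= \bigl(S(t+\theta)-S(t)\bigr)u_0 + \int_0^{t}\bigl(S(t+\theta-s)-S(t-s)\bigr)f(u(s,u_0))\,ds\\
&\quad + \int_t^{t+\theta} S(t+\theta-s) f(u(s,u_0))\,ds =: I_1+I_2+I_3 .
\end{align*}
Throughout, $\|u_0\|\le R$, so $\|u(s,u_0)\|\le B_R$ for all $s\ge 0$. Moreover $\|u(s,u_0)\|\le R^*$ for $s\ge T_R$ by the absorbing property of Theorem \ref{thm:4.1} and (\ref{bounded}), which gives the bound $\|f(u(s,u_0))\|\le B^f_{R^*}$ there. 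The aim is to bound each $I_j$ by a multiple of $\theta^\alpha$ and then absorb all constants into $C_R B^f_{R^*}$.

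\textbf{$I_1$ and $I_3$.} For $I_1$, Lemma \ref{lemma:4.1} applies directly since $t\ge T_R$, giving $\|I_1\|\le C_1 R\,\theta^\alpha$. For $I_3$, the contraction estimate of Lemma \ref{le:3.1} gives $\|I_3\|\le \theta\, B^f_{R^*}$, because $s\ge t\ge T_R$. To convert $\theta$ into $\theta^\alpha$, treat $\theta\le 1$, where $\theta\le\theta^\alpha$. For $\theta>1$ the claim is trivial: $\|u(t+\theta)-u(t)\|\le 2R^*\le 2R^*\theta^\alpha$.

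\textbf{$I_2$ (the main obstacle).} Lemma \ref{lemma:4.1} as stated needs its first argument $t-s\ge T_R$, but in $I_2$ the quantity $t-s$ ranges down to $0$. I would go back to the contour estimate inside the proof of Lemma \ref{lemma:4.1}, which actually gives
\[
\|S(\tau+\theta)-S(\tau)\|\le C'\theta^\alpha\tau^{-\alpha}, \qquad \tau>0.
\]
Since $\alpha\in(0,1)$, the weight $\tau^{-\alpha}$ is integrable near $0$. I would then split $\int_0^t=\int_0^{T_R}+\int_{T_R}^t$:
\begin{itemize}
\item On $[0,T_R]$ we use $\|f(u)\|\le B^f_{B_R}$.
\item On $[T_R,t]$ we use $B^f_{R^*}$.
\end{itemize}
Since $\int_0^t (t-s)^{-\alpha}\,ds=t^{1-\alpha}/(1-\alpha)$ grows with $t$, a bound uniform in $t$ needs more care. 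I would first try to exploit the decay $\|S(\tau+\theta)-S(\tau)\|\le \int_\tau^{\tau+\theta}\|S'(r)\|\,dr$ with $\|S'(r)\|\lesssim r^{-1}$ for large $r$. Failing that, I would use the interpolation $\min(\theta^\alpha\tau^{-\alpha},\,\theta\tau^{-1-\epsilon})$ when $m$ has enough decay.

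If uniformity in $t$ cannot be obtained, I would accept a constant $C_R$ depending on a fixed time horizon (uniform in $t\in[T_R,N]$). This suffices for the later use of the $\|\cdot\|_N$ seminorms in $\|\cdot\|_\alpha$. Collecting the three bounds then gives the stated estimate with $C_R$ depending on $R,T_R,C_1,\alpha$.
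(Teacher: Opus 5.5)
Your decomposition into $I_1,I_2,I_3$, the bounds for $I_1$ (Lemma \ref{lemma:4.1}) and $I_3$ (contractivity), and the reduction to $\theta\le 1$ via $2R^*\le 2R^*\theta^\alpha$ for $\theta>1$ are exactly what the paper does. \textbf{The genuine gap is $I_2$.} The lemma claims a constant independent of $t\ge T_R$, and none of your routes yields one.

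With the split at $s=T_R$, the bound $\|S(\tau+\theta)-S(\tau)\|\le C'\theta^\alpha\tau^{-\alpha}$ and only a sup bound on $\|f(u(s))\|$ give $C'\theta^\alpha B\,t^{1-\alpha}/(1-\alpha)$, which grows with $t$. The fallbacks do not repair this:
\begin{itemize}
\item Fallback (a) fails. Lemma \ref{lemma:4.4} gives $\|S(\tau+\theta)-S(\tau)\|\le C_3\log(1+\theta/\tau)\le C_3\theta/\tau$, but $\int_1^t\tau^{-1}\,d\tau=\log t$ still diverges.
\item Fallback (b) is not available. $m=1+k\ge 1$ cannot decay. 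Already for $k\equiv 0$, which is an admissible completely monotone kernel, scaling gives $\|S(\tau+\theta)-S(\tau)\|\sim c\,\theta/\tau$ on $L^p(\mathbb{R}^d)$ as $\tau\to\infty$, so the $\tau^{-1}$ rate is sharp. Operator-norm bounds on the kernel, combined only with an $L^\infty$-in-time bound on $f(u(s))$, therefore cannot produce $t$-uniformity.
\item Fallback (c) proves a strictly weaker statement that does not suffice later. In $\|\phi\|_N=\sup_{\theta\in[N^{-1},N]}\|\phi(\theta)\|$ the horizon $N$ restricts the shift variable $\theta$, not the semigroup time $t$. Lemma \ref{lemma:4.3}, the absorbing set $\mathfrak{B}^*$ and the asymptotic-compactness argument (with $t_n\to\infty$) all need the estimate uniformly for $t\ge T_R$.
\end{itemize}

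What the paper adds is a different split plus a time-integrability bound on the forcing. It cuts $J_1$ at $s=t-T_R$, next to the kernel singularity rather than at $s=T_R$.
\begin{itemize}
\item On $[0,t-T_R]$ one has $t-s\ge T_R$, so Lemma \ref{lemma:4.1} gives the $t$-independent factor $C_1\theta^\alpha$. This multiplies $\int_0^t\|f(u(s,u_0))\|\,ds\le K_R$, valid for all $t\ge 0$.
\item On $[t-T_R,t]$ the weight $(t-s)^{-\alpha}$ is integrated over an interval of fixed length, giving $T_R^{1-\alpha}/(1-\alpha)$.
\end{itemize}
The uniform $L^1$ bound $K_R$ is precisely the missing idea in your proposal.

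Be aware, though, how the paper obtains $K_R$. It asserts that $u(t)$ converges exponentially to a unique equilibrium $\tilde u$ with $f(\tilde u)=0$. This invokes asymptotic compactness, which is established only later and uses this very lemma. It also invokes a local exponential stability that is not derived from $(F_l)$ and $(F_d)$. By themselves these hypotheses only give absorption into $\mathcal{B}^*$, not integrability of $\|f(u(\cdot,u_0))\|$ on $[0,\infty)$. To close your argument you would have to supply such an integrability or decay property, in effect as an additional assumption.
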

		\begin{proof} Let $t \geq 0$ and $\theta > 0$. By the integral representation (\ref{Mild-solution}), subtracting the expressions for $u(t + \theta, u_0)$ and $u(t, u_0)$ yields
			$$\begin{aligned} &u(t+\theta,u_0)-u(t,u_0)\\&=[S(t+\theta)-S(t)]u_0+\int_0^{t+\theta}S(t+\theta-s) f({u(s ,{ u_0})}) \mathrm{d}s-\int_0^tS(t-s) f({u(s ,{ u_0})}) \mathrm{d}s
				\\&=[S(t+\theta)-S(t)]u_0+\int_0^t[S(t+\theta-s)-S(t-s)]f(u(s,u_0))ds\\& \quad+\int_t^{t+\theta}S(t+\theta-s)f(u(s,u_0))ds
				\\&=I_1+J_1+J_2.
			\end{aligned}$$
			
			By Lemma \ref{lemma:4.1}, there exist a constant $C_1>0$ such that
			$$\|S(t+\theta)-S(t)\| \leq C_1\theta^{\alpha},\quad t\geq T_R,~\theta>0.$$
			Therefore, we have
			$$\|I_1\| =\|(S(t+\theta)-S(t))u_0\| \leq C_1\theta^{\alpha}\|u_0\| \leq C_1R\theta^{\alpha}.$$
			We split the integration interval of $J_{1}$ into two parts: $s \in [0, t-T_{R}]$ and $s \in [t-T_{R}, t]$.
			
 To handle the term containing $f$, we first establish a uniform bound depending only on $R$.
				By the global absorbability, $\|u(t)\| \leq R^*$ for all $t \geq T_R$.
				The  asymptotic compactness and Lyapunov structure imply that
				$\lim_{t\to\infty}\|u(t)-\tilde{u} \| = 0$, where $\tilde{u}$ is the unique equilibrium with $f(\tilde{u})=0$.
				Choose $\rho>0$ small enough so that local exponential stability holds, i.e.,
				when $\|v\| \leq\rho$,
				\[
				\frac{d}{dt}\|v\| ^p \leq -\frac{p\lambda}{2}\|v\| ^p .
				\]
				Then there exists $T_1 \geq T_R$ (depending on $R$) such that
				$\|u(T_1)-\tilde{u} \|\leq\rho$.  By Gronwall's inequality,
				\[
				\|u(t)-\tilde{u} \|\leq \rho \, e^{-\frac{\lambda}{2}(t-T_1)}, \qquad t \geq T_1 .
				\]
				Since $f$ is Lipschitz on $\overline{B}_{2R^*}$ with constant $L(R)$ and $f(\tilde{u} )=0$, we obtain
				\[
				\|f(u(t))\|\leq L(R)\|u(t)-\tilde{u} \|\leq L(R)\rho \, e^{-\frac{\lambda}{2}(t-T_1)}, \quad t \geq T_1 .
				\]
				Thus
				\[
				\int_{T_1}^{\infty} \|f(u(s))\|\,ds \leq \frac{2L(R)\rho}{\lambda}.
				\]
				On the finite interval $[0,T_1]$, the solution is continuous; hence
				$M := \max_{0\leq s\leq T_1}\|u(s)\| < \infty$.
				By the local Lipschitz property of $f$, there is a constant $C_f(M)$ such that
				$\|f(u(s))\|\leq C_f(M)$ for $0\leq s\leq T_1$.  Consequently,
				\[
				\int_0^{T_1} \|f(u(s))\|\,ds \leq T_1\,C_f(M).
				\]
				Setting
				\[
				K_R := T_1 C_f(M) + \frac{2L(R)\rho}{\lambda} < \infty,
				\]
				we obtain the desired uniform bound
				\[
				\int_0^t \|f(u(s,u_0))\|\,ds \leq K_R \qquad \text{for all } t\geq 0 .
				\]
				
				Now consider the first part $s \in [0, t-T_{R}]$.
				Here $t-s \geq T_{R}$, so Lemma~\ref{lemma:4.1} yields
				\[
				\|S(t+\theta-s) - S(t-s)\| \leq C_{1} \theta^{\alpha}.
				\]
            Hence,
			\begin{align*}
				\int_{0}^{t-T_{R}} \|(S(t+\theta-s) - S(t-s))  f(u(s,u_{0}))\|  ds \leq & C_{1} \theta^{\alpha} \int_{0}^{t-T_{R}} \|f(u(s,u_{0}))\| ds\\  \leq & C_{1} \theta^{\alpha} K_{R}.
			\end{align*}
			For $ s \in [t - T_R, t] $, we have $ t - s < T_R $.
 From Lemma \ref{lemma:4.1}, it follows that there is a constant $C>0$ such that for all $t>0$
$$ \|S(t+\theta)-S(t)\| \leq  {C\theta^{\alpha}}{t^{-\alpha}}.
			$$
Therefore, by using the boundness we have
			$$\begin{aligned}
			\int_{t-T_{R}}^{t} \|(S(t+\theta-s) - S(t-s))f(u(s,u_{0}))\| ds \leq & C\theta^\alpha \int_{t-T_{R}}^{t} (t-s)^{-\alpha}\|f(u(s,u_{0}))\|  ds\\ \leq & C\theta^\alpha B_R^f T_{R}^{1-\alpha}/(1-\alpha).
\end{aligned}$$
			Consequently, we have
			$$\begin{aligned} \|J_1\|  &\leq \int_{0}^{t} \|(S(t+\theta-s) - S(t-s))f(u(s,u_{0}))\|  ds
				\\&\leq \int_{0}^{t-T_{R}} \|(S(t+\theta-s) - S(t-s))f(u(s,u_{0}))\| ds \quad \\&\quad+\int_{t-T_{R}}^{t} \|(S(t+\theta-s) - S(t-s))f(u(s,u_{0}))\| ds
				\\ & \leq C_{1} \theta^{\alpha} K_{R}+C\theta^\alpha B_R^f T_{R}^{1-\alpha}/(1-\alpha).
			\end{aligned}$$
			Finally,  we obtain
			$$
			\|J_2\|  \leq \int_t^{t+\theta}\|S(t+\theta-s)f(u(s,u_0))\| ds \leq B_R^f\int_t^{t+\theta}ds\leq B_R^f\theta.$$
			
			Combining with all above the estimates, it follows that
			$$\begin{aligned} \|u(t+\theta, u_{0})-u(t, u_{0})\|  &\leq \|I_1\| +\|J_1\| +\|J_2\| \\&\leq C_1R\theta^{\alpha}+ C_{1} \theta^{\alpha} K_{R}+ C\theta^\alpha B_R^f T_{R}^{1-\alpha}/(1-\alpha)+B_R^f\theta .
			\end{aligned}$$
			For $\theta\leq1$, there exists a constant $C_R^1 >0$ (depending on the parameters $R, T_R, K_R, B_R^f $, etc.) such that
			$$\|u(t+\theta,u_0)-u(t,u_0)\| \leq \tilde C_R B_{R}^f\theta^\alpha,\quad t\geq T_R.$$
 For  $\theta>1$, we simply use the fact that the solution is already inside the absorbing ball: when $t\ge T_R$ and $\|u_0\|\le R$, there exists $R^*>0$ such that $\|u(t)\|\le R^*$ for all such data. Consequently,
	\[
	\|u(t+\theta,u_0)-u(t,u_0)\| \le 2R^* .
	\]
	Because $\theta>1$ implies $\theta^\alpha>1 $, we obtain
	\[
	\|u(t+\theta,u_0)-u(t,u_0)\| \le 2R^* \theta^\alpha .
	\]

    Set $C_R := \max\{\tilde C_R,\, 2R^*/B_{R}^f\}$. Then, for every $\theta>0$,
	\[
	\|u(t+\theta,u_0)-u(t,u_0)\| \le C_R B_{R}^f\theta^\alpha .
	\]
	This proves that the solution $ u(t, u_0) $ is H\"{o}lder continuous in time with exponent $ \alpha $ in the $ L^p(\mathbb{R}^d) $-norm. Moreover, for $ t \geq T_R $, the constant $ B_R^f $ can be replaced by $ B_{R^*}^f $.
		\end{proof}

		\subsubsection{ Growth bounded}
		
		\begin{lemma}\label{lemma:4.3} For any $t\geq T_R$, and $\|u_{0,n}\| \leq R$, $				\|(T_t\varphi_{u_{0,n}})(\theta)\| \leq C_R B_{R^*}^f\theta^\alpha+{R^*}.$
		\end{lemma}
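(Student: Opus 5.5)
The plan is to identify $(T_t\varphi_{u_{0,n}})(\theta)$ with a time-shift of the solution and then use Lemma \ref{lemma:4.2} together with the absorbing ball of Theorem \ref{thm:4.1}. The first step is to check that, for initial functions of the form $\varphi_{u_0}=S(\cdot)u_0$, the definition (\ref{semigroup}) gives
$$
(T_t\varphi_{u_0})(\theta)=S(t+\theta)u_0+\int_0^t S(t+\theta-s)f(u(s,u_0))\,ds .
$$
This is the solution formula (\ref{Mild-solution}) evaluated at time $t+\theta$, except that the integral stops at $s=t$ instead of $s=t+\theta$. Hence
$$
(T_t\varphi_{u_0})(\theta)=u(t+\theta,u_0)-\int_t^{t+\theta}S(t+\theta-s)f(u(s,u_0))\,ds .
$$
Here the local Lipschitz condition $(F_l)$ and the truncation argument of Section \ref{sect:4.1} (Corollary) guarantee that $u(\cdot,u_0)$ exists globally.

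I will then decompose $u(t+\theta,u_0)=u(t,u_0)+[u(t+\theta,u_0)-u(t,u_0)]$. For $t\ge T_R$ and $\|u_{0,n}\|\le R$, Theorem \ref{thm:4.1} (positive invariance and absorption) gives $\|u(t,u_{0,n})\|\le R^*$. Lemma \ref{lemma:4.2} gives
$$
\|u(t+\theta,u_{0,n})-u(t,u_{0,n})\|\le C_RB_{R^*}^f\theta^\alpha ,
$$
where $B_R^f$ may be replaced by $B_{R^*}^f$ because $t\ge T_R$. Together these two bounds produce exactly the claimed quantity $R^*+C_RB_{R^*}^f\theta^\alpha$.

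The main obstacle is the leftover tail $\int_t^{t+\theta}S(t+\theta-s)f(u(s))\,ds$. By Lemma \ref{le:3.1} its norm is at most $B_{R^*}^f\theta$, since $s\ge t\ge T_R$ keeps $u(s)$ in $\mathcal{B}^*$. This term does not appear in the statement, so it has to be absorbed. I will first try to absorb it the same way the constant $C_R$ was built in the proof of Lemma \ref{lemma:4.2}. For $\theta\le1$ we have $\theta\le\theta^\alpha$, so the tail is at most $B_{R^*}^f\theta^\alpha$ and can be put into $C_R$ by enlarging it; this is harmless, since $C_R$ was already defined as a maximum over such contributions.

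For $\theta>1$ the $\theta^\alpha$ bound is not needed, and I will estimate directly. One option is to redo the splitting $I_1+J_1+J_2$ of Lemma \ref{lemma:4.2} with the integral truncated at $t$; then the $J_2$ piece simply disappears. Doing this gives the bound $\|S(t+\theta)u_0\|+\|J_1\|$ plus the $\int_0^t S(t-s)f$ contribution, and this can be reorganized as $\|u(t)\|+\|I_1\|+\|J_1\|\le R^*+C_RB_{R^*}^f\theta^\alpha$. I expect this route to be cleaner, because it avoids the tail altogether. I would therefore present the proof in that form: write $(T_t\varphi)(\theta)-u(t)=I_1+J_1$ and reuse the bounds for $I_1$ and $J_1$ from Lemma \ref{lemma:4.2}.
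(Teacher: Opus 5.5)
Your final argument is correct, but it takes a different route from the paper. The paper anchors at $u(t+\theta,u_{0,n})$. It writes $(T_t\varphi_{u_{0,n}})(\theta)=u(t+\theta,u_{0,n})-\int_t^{t+\theta}S(t+\theta-s)f(u(s,u_{0,n}))\,ds$ and bounds $\|u(t+\theta,u_{0,n})\|$ by the absorbing ball. It bounds the tail by $B_R^f\theta$ using $\|S(\cdot)\|\le 1$, and then simply rewrites $B_R^f\theta$ as $C_RB_R^f\theta^\alpha$. That rewriting is justified only for $\theta\le 1$ (after enlarging $C_R$), because $\theta^{1-\alpha}$ is unbounded. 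This is exactly the obstacle you flagged.

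You instead anchor at $u(t,u_{0,n})$, and the identity
\[
(T_t\varphi_{u_{0,n}})(\theta)-u(t,u_{0,n})=[S(t+\theta)-S(t)]u_{0,n}+\int_0^t[S(t+\theta-s)-S(t-s)]f(u(s,u_{0,n}))\,ds=I_1+J_1
\]
is exact, so the linearly growing $J_2$-type tail never appears. The estimates for $I_1$ and $J_1$ in the proof of Lemma~\ref{lemma:4.2} rest on three ingredients: Lemma~\ref{lemma:4.1}, its $\theta^\alpha t^{-\alpha}$ variant, and the integral bound $K_R$. None of these restricts $\theta$, so you obtain $R^*+\tilde C\theta^\alpha$ for every $\theta>0$ with a single constant.

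This is what \eqref{ineq4.4} and the later weighted-norm estimate actually require, since there $C_R$ must not depend on $N$. So your route is the more robust one. The price is that it leans on the heavier internal estimates of Lemma~\ref{lemma:4.2} rather than only on the contraction bound.

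Two small remarks. First, since the identity covers all $\theta$ at once, you can drop the preliminary case split $\theta\le1$ / $\theta>1$ and the first decomposition through $u(t+\theta)-u(t)$. Second, the passage from $B_R^f$ to $B_{R^*}^f$ is only a relabeling absorbed into $C_R$. In the piece of $J_1$ over $[t-T_R,t]$, $s$ may be smaller than $T_R$, where only $B_R^f$ is available; the paper has the same looseness.
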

		\begin{proof} It follows from (\ref{Mild-solution}) that
			$$
u(t+\theta,u_{0,n})=S(t+\theta)u_{0,n}+\int_0^{t+\theta}S(t+\theta-s) f({u(s ,{ u_{0,n}})}) ds.$$
			Hence, we have			$$\begin{aligned}u(t+\theta,u_{0,n})&=S(t+\theta)u_{0,n}+\int_0^{t+\theta}S(t+\theta-s) f({u(s ,{ u_{0,n}})}) \mathrm{d}s
				\\&=S(t+\theta)u_{0,n}+\int_0^t S(t+\theta-s)f(u(s,u_{0,n}))ds\\&\quad+\int_t^{t+\theta}S(t+\theta-s)f(u(s,u_{0,n}))ds
				\\&=(T_t \varphi_{u_{0,n}})(\theta)+\int_t^{t+\theta}S(t+\theta-s)f(u(s,u_{0,n}))ds.
			\end{aligned}$$
			From the estimate of \( J_2 \) in Lemma \ref{lemma:4.2}, we obtain
			$$\begin{aligned}\| u(t+\theta,u_{0,n})-(T_t \varphi_{u_{0,n}})(\theta)\| &=\|\int_t^{t+\theta}S(t+\theta-s)f(u(s,u_{0,n}))ds\|
				\\&\leq \int_t^{t+\theta}\|S(t+\theta-s)f(u(s,u_{0,n}))\| ds
				\\&\leq B_R^f\theta
				.  \end{aligned} $$
			i.e.,
			\begin{equation*}
				\| u(t+\theta,u_{0,n})-(T_t \varphi_{u_{0,n}})(\theta)\|  \leq  C_R B_{R}^f\theta^\alpha, \quad t \geq 0.
			\end{equation*}
			Hence
			$$
			\|(T_t \varphi_{u_{0,n}})(\theta)\|  \leq C_R B_{R}^f\theta^\alpha+ \|u(t+\theta,u_{0,n})\| \leq C_R B_{R}^f\theta^\alpha+ R.$$
			Note that these estimates are uniform in $t \geq 0$.		
			Then, using the fact that 	 $\|u(t, u_{0,n})\|  \in \mathcal{B}^*$, i.e., 	$\|u(t, u_{0,n})\|  \leq R^*$, for all $t \geq T_R$ and $\|u_{0,n}\| \leq R$, gives the desired inequality.
		\end{proof}
		
		It follows from Lemma \ref{lemma:4.3} that
		\begin{equation}\label{ineq4.4}
			\|(T_{t}\varphi_{u_{0,n}})(\theta)\| \leq C_R B_{R^*}^f N^{\alpha}+R^{*},\quad t\geq T_{R},~~0\leq\theta\leq N.
		\end{equation}

		\subsubsection{ Boundedness of $\theta$-derivatives}
		
		\begin{lemma}\label{lemma:4.4} For all $t > 0$, there exists a constant $C_3 > 0$ such that
			$$
			\left\|\frac{d}{dt}S(t)v\right\|  \leq C_{3}t^{-1}\|v\| .
			$$
		\end{lemma}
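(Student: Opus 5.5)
The plan is to follow the contour-integral representation already used in the proof of Lemma \ref{lemma:4.1}. There, $S(t)$ was written as
$$
S(t)=\frac{1}{2\pi i}\int_{\varGamma} e^{\lambda t}\frac{\varphi^m(\lambda)}{\lambda}\bigl(\varphi^m(\lambda)I+A\bigr)^{-1}\,d\lambda ,
$$
with $\varGamma=\varGamma_1\cup\varGamma_2$ the two rays $\{se^{\pm i\phi}\}$, $\phi\in(\pi/2,\pi)$. On these rays $|e^{\lambda t}|=e^{st\cos\phi}$ decays exponentially for every fixed $t>0$. This lets me differentiate under the integral sign, justified by dominated convergence on compact subsets of $(0,\infty)$. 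Doing so gives
$$
\frac{d}{dt}S(t)v=\frac{1}{2\pi i}\int_{\varGamma} e^{\lambda t}\,\varphi^m(\lambda)\bigl(\varphi^m(\lambda)I+A\bigr)^{-1}v\,d\lambda .
$$
The factor $1/\lambda$ is cancelled by the derivative of $e^{\lambda t}$.

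Next I invoke the sectorial resolvent bound used in Lemma \ref{lemma:4.1}:
$$
\bigl\|(\varphi^m(\lambda)I+A)^{-1}\bigr\|\le \frac{C}{|\varphi^m(\lambda)|},\qquad \lambda\in\varGamma .
$$
It holds because $\varphi^m(\lambda)$ stays in a sector $|\arg\varphi^m(\lambda)|\le\pi-\delta$ avoiding the spectrum of $-A$. Here I use that $\varphi^m=1/\hat m$ is a Bernstein function and therefore maps the sector into itself, since $|\arg\varphi^m(\lambda)|\le|\arg\lambda|$. Inserting this bound gives $\|\varphi^m(\lambda)(\varphi^m(\lambda)I+A)^{-1}\|\le C$ on $\varGamma$. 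Hence
$$
\Bigl\|\frac{d}{dt}S(t)v\Bigr\|\le \frac{C}{2\pi}\|v\|\int_{\varGamma}|e^{\lambda t}|\,|d\lambda| = \frac{C}{\pi}\|v\|\int_0^\infty e^{-st|\cos\phi|}\,ds=\frac{C}{\pi|\cos\phi|}\,t^{-1}\|v\| .
$$
So $C_3=C/(\pi|\cos\phi|)$ works. This is the same Gamma-integral computation as in Lemma \ref{lemma:4.1}, now with exponent $0$ in place of $\alpha-1$.

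The main obstacle is rigour rather than computation. First, I must check that the inverse Laplace transform may be deformed onto $\varGamma$. This needs $\varphi^m$ to extend analytically to the sector $|\arg\lambda|<\phi$ with $\phi>\pi/2$, which holds for completely monotone $k$, since $\hat m(\lambda)=1/\lambda+\hat k(\lambda)$ is a Stieltjes-type function. Second, I must check that the resolvent constant $C$ is uniform along all of $\varGamma$. If the sector-preservation argument for $\varphi^m$ turns out to be delicate, I will first try to bound $\arg\varphi^m(\lambda)$ directly from the Stieltjes representation of $\hat m$. Near $\lambda=0$ the integrand is bounded by $C$, so there is no singularity at the origin and no further regularization is needed.
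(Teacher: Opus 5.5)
Your proposal is correct and follows the paper's proof essentially step for step. Both use the contour representation from Lemma \ref{lemma:4.1}, differentiate under the integral so that the factor $1/\lambda$ cancels, apply the uniform bound $\|\varphi^m(\lambda)(\varphi^m(\lambda)I+A)^{-1}\|\le C$ on $\varGamma$, and evaluate $\int_{\varGamma}|e^{\lambda t}|\,|d\lambda|=2/(t|\cos\phi|)$, giving the same constant $C_3=C/(\pi|\cos\phi|)$. Your added remarks fill in points the paper leaves implicit: the analytic extension and sector preservation of $\varphi^m$ come from the Stieltjes structure of $\hat m$, i.e. the complete Bernstein property rather than the mere Bernstein property, and the differentiated integrand is integrable at the origin.
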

		\begin{proof} Starting from the representation of the resolvent family \(\{S(t)\}_{t \geq 0}\),
we have
			\[
			S(t) = \frac{1}{2\pi i} \int_{\varGamma} e^{\lambda t} \frac{\varphi^m(\lambda)}{\lambda} (\varphi^m(\lambda) I+A)^{-1} \, d\lambda,
			\]
			where \(\varphi^m(\lambda) = 1 / \widehat{m}(\lambda)\) is a Bernstein function and the integration contour \(\varGamma\) is the same as in Lemma \ref{lemma:4.1}.
			
			Differentiating \(S(t)\), we obtain
			\[
			\frac{d}{dt} S(t) 
 = \frac{1}{2\pi i} \int_{\varGamma} e^{\lambda t} \varphi^m(\lambda) (\varphi^m(\lambda) I +A)^{-1} \, d\lambda.
			\]
			Therefore,  we have
			\[
			\left\| \frac{d}{dt} S(t)  \right\|  \leq \frac{1}{2\pi} \int_{\varGamma} |e^{\lambda t}| \, |\varphi^m(\lambda)| \, \| (\varphi^m(\lambda) I+A)^{-1} \|   \, |d\lambda|.
			\]
			By the theory of analytic semigroups, there exists a constant  $C > 0 $  such that
			\[
			\| (\varphi^m(\lambda) I +A)^{-1} \|  \leq \frac{C}{|\varphi^m(\lambda)|},
			\]
			hence
			\[
			\left\| \frac{d}{dt} S(t)  \right\| \leq \frac{C }{2\pi} \int_{\varGamma} |e^{\lambda t}| \, |d\lambda|.
			\]
			On the contour \(\varGamma\), \(\lambda = s e^{\pm i\phi}\), so \(|e^{\lambda t}| = e^{s t \cos \phi}\), and \(|d\lambda| = ds\). Since \(\cos \phi < 0\), we have
			\[
			\int_{\varGamma} |e^{\lambda t}| \, |d\lambda| = 2 \int_0^\infty e^{s t \cos \phi} \, ds = 2 \int_0^\infty e^{-s |t \cos \phi|} \, ds = \frac{2}{|t \cos \phi|}.
			\]
	Therefore,  we obtain
			\[
			\left\| \frac{d}{dt} S(t)  \right\|  \leq \frac{C  }{2\pi} \cdot \frac{2}{|t \cos \phi|} =: \frac{C_3 }{ t}.
			\]
				
		\end{proof}

		\begin{lemma}\label{lemma:4.5} For all  $t \geq T_R$ , $\| x_{0,n}\|  \leq R$, and $\theta > 0$, there holds
\begin{equation*}
				\left\| \frac{d}{d\theta} (T_t \varphi_{u_{0,n}})(\theta) \right\| \leq C_R B_{R}^f {\theta^{ \alpha-1}}.
			\end{equation*}

		\end{lemma}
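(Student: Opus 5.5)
We differentiate the explicit formula for the shifted function. With $\varphi_{u_{0,n}}=S(\cdot)u_{0,n}$ we have
$$
(T_t\varphi_{u_{0,n}})(\theta)=S(t+\theta)u_{0,n}+\int_0^t S(t+\theta-s)f(u(s,u_{0,n}))\,ds ,
$$
so for $\theta>0$ both terms are differentiable in $\theta$:
$$
\frac{d}{d\theta}(T_t\varphi_{u_{0,n}})(\theta)=S'(t+\theta)u_{0,n}+\int_0^t S'(t+\theta-s)f(u(s,u_{0,n}))\,ds .
$$
Differentiation under the integral is legitimate because $t+\theta-s\geq\theta>0$ on $[0,t]$. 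There $\|S'(t+\theta-s)\|\le C_3\theta^{-1}$ uniformly by Lemma \ref{lemma:4.4}, and $\|f(u(s,u_{0,n}))\|\le B_R^f$. Hence dominated convergence applies to the difference quotients.

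For the first term, Lemma \ref{lemma:4.4} gives
$$
\|S'(t+\theta)u_{0,n}\|\le C_3 R\,(t+\theta)^{-1}\le C_3R\,\theta^{-1}.
$$
Also $(t+\theta)^{-1}\le T_R^{-1}$, so this term is bounded by $C_3R\min\{\theta^{-1},T_R^{-1}\}$. Writing $\min\{\theta^{-1},T_R^{-1}\}\le T_R^{-\alpha}\theta^{\alpha-1}$ (interpolate: for $\theta\le T_R$ use $\theta^{-1}=\theta^{\alpha-1}\theta^{-\alpha}$ and bound $\theta^{-\alpha}$ suitably, or for $\theta\ge T_R$ use $T_R^{-1}\le T_R^{-\alpha}\theta^{\alpha-1}$) gives the $\theta^{\alpha-1}$ shape. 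For small $\theta$ one must instead use $\theta^{-1}\ge\theta^{\alpha-1}$ carefully. This is where I will use $t\ge T_R$: $(t+\theta)^{-1}\le T_R^{-1}\le T_R^{-1}\max(1,\theta^{\alpha-1})$, and for $\theta\le 1$ we have $\theta^{\alpha-1}\ge1$, while for $\theta>1$ we use $(t+\theta)^{-1}\le\theta^{-1}\le\theta^{\alpha-1}$. Either way the first term is bounded by $C R\theta^{\alpha-1}$.

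The integral term is the main obstacle, since the naive bound
$$
C_3B_R^f\int_0^t(t+\theta-s)^{-1}\,ds=C_3B_R^f\log\frac{t+\theta}{\theta}
$$
grows with $t$. I would split $[0,t]$ as in Lemma \ref{lemma:4.2}. On $[0,t-T_R]$ I bound $\|S'(t+\theta-s)\|\le C_3T_R^{-1}$ and use the uniform integral bound $\int_0^\infty\|f(u(s,u_{0,n}))\|ds\le K_R$ established in the proof of Lemma \ref{lemma:4.2}. This gives a constant, which is then absorbed into $\theta^{\alpha-1}$ exactly as for the first term. On $[t-T_R,t]$ I estimate
$$
C_3B_R^f\int_0^{T_R}(r+\theta)^{-1}\,dr\le C_3B_R^f\int_0^{T_R}r^{-\alpha}\theta^{\alpha-1}\,dr=C_3B_R^f\frac{T_R^{1-\alpha}}{1-\alpha}\theta^{\alpha-1},
$$
using the elementary inequality $(r+\theta)^{-1}\le r^{-\alpha}\theta^{\alpha-1}$, which follows from $r^\alpha\theta^{1-\alpha}\le r+\theta$ (weighted AM--GM).

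Collecting the three contributions gives
$$
\left\|\frac{d}{d\theta}(T_t\varphi_{u_{0,n}})(\theta)\right\|\le\Big(C_3R+C_3T_R^{-1}K_R+C_3B_R^f\tfrac{T_R^{1-\alpha}}{1-\alpha}\Big)\max\{1,\ldots\}\,\theta^{\alpha-1}.
$$
Finally we enlarge $C_R$ (dividing by $B_R^f$), as was done at the end of Lemma \ref{lemma:4.2}, so that the constant takes the stated form $C_RB_R^f$.
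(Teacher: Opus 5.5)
Your proof is the paper's argument: the same decomposition of $\frac{d}{d\theta}(T_t\varphi_{u_{0,n}})(\theta)$ into the $S(t+\theta)u_{0,n}$ part and the integral part, Lemma~\ref{lemma:4.4} for the kernel, and the cut of $[0,t]$ at $t-T_R$ with $K_R$ on the far piece and $B_R^f$ on the near piece; the only variation is that you bound the near piece by the pointwise inequality $(r+\theta)^{-1}\le r^{-\alpha}\theta^{\alpha-1}$, which gives the explicit constant $T_R^{1-\alpha}/(1-\alpha)$, whereas the paper computes $\ln(1+T_R/\theta)$ and uses $\sup_{\theta>0}\theta^{1-\alpha}\ln(1+T_R/\theta)$. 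One slip: on $[0,t-T_R]$ you must not replace the kernel by the constant $C_3T_R^{-1}$, since a constant is not $O(\theta^{\alpha-1})$ as $\theta\to\infty$; keep $(t+\theta-s)^{-1}\le(T_R+\theta)^{-1}\le T_R^{-\alpha}\theta^{\alpha-1}$, which is also the one-line bound for the first term (your parenthetical case analysis there has the two cases interchanged, though your later split at $\theta=1$ is correct).
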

		\begin{proof} Define $g(\theta) := (T_t \varphi_{u_{0,n}})(\theta)$. From Lemma \ref{lemma:4.3}, we have
			$$
			g(\theta) = S(t+\theta)u_{0,n} + \int_0^t S(t+\theta - s)f(u(s, u_{0,n})) \, ds.
			$$
			Then,
			$$
			\begin{aligned}
				g'(\theta) &= \frac{d}{d\theta}(T_t \varphi_{u_{0,n}})(\theta) \\
				&= \frac{d}{d\theta} \left[ S(t+\theta)u_{0,n} \right] + \frac{d}{d\theta} \left[ \int_0^t S(t+\theta - s)f(u(s, u_{0,n})) \, ds \right] \\
				&=: A(\theta) + B(\theta).
			\end{aligned}
			$$
			
 	By Lemma \ref{lemma:4.4}, it yields
			$$
			\left\| \frac{d}{d\sigma} S(\sigma)v \right\|  \leq C_3 \sigma^{  - 1} \| v\| , \quad \sigma > 0.
			$$
			Letting $\sigma = t + \theta$, we have $d\sigma=d\theta$ and
$
			A(\theta) =  \frac{d}{d\sigma} S(\sigma)u_{0,n}$,
			so
			$$ \| A(\theta)\|   =   \left\| \frac{d}{d\sigma} S(\sigma)u_{0,n} \right\| \leq C_3 (t + \theta)^{ - 1} \| u_{0,n}\|  .
			$$
			Given $\|u_0\| \leq R$, it follows that
			$$
			\|A(\theta)\| \leq C_3 R T_R^{-\alpha_1} (t + \theta)^{\alpha_1 - 1},~~~t\geq T_R,
			$$
for $\alpha_1 \geq \alpha$, we also have $(t + \theta)^{\alpha_1 - 1} \leq (t + \theta)^{\alpha - 1}$. Moreover, for $\theta > 0$, $(t + \theta)^{\alpha - 1} \leq \theta^{\alpha - 1}$ because $\alpha - 1 < 0$ and $t + \theta \geq \theta$. Thus,
			$$
			\| A(\theta)\|   \leq C_3 RT_R^{-\alpha_1} \theta^{\alpha - 1}.
			$$

On the other hand, we have
			$$
			B(\theta) = \frac{d}{d\theta} \left[ \int_0^t S(t + \theta - s)f(u(s, u_0)) \, ds \right] = \int_0^t \frac{d}{d\theta} \left[ S(t + \theta - s)f(u(s, u_0)) \right] \, ds.
			$$
It yields
			$$\begin{aligned}
				\|B(\theta)\| & \leq \int_0^t \|\frac{d}{d\theta}S(t+\theta-s)f(u(s,u_0))\| ds \\&\leq C_3\int_0^t (t+\theta-s)^{ -1}\|f(u(s,u_0))\| ds .
			\end{aligned}$$

We split the integral interval (since \(t \ge T_R\)),$[0,t] = [0,\,t-T_R] \cup [t-T_R,\,t].$
Accordingly, $$\begin{aligned}\|B(\theta)\| \le &C_3 \left( \int_0^{t-T_R} (t+\theta-s)^{-1}\|f(u(s))\|\,ds+ \int_{t-T_R}^t (t+\theta-s)^{-1}\|f(u(s))\|\,ds\right)\\ := &C_3(I_1+I_2).\end{aligned} $$

For \(s\in[0,t-T_R]\) we have
$\sigma := t+\theta-s \ge \theta + T_R \ge T_R.$
It yields
\[
\sigma^{-1} \le T_R^{-\alpha}\sigma^{\alpha-1} \le T_R^{-\alpha}\theta^{\alpha-1}.
\]
By Lemma \ref{lemma:4.2}, \(\int_0^t \|f(u(s))\|\,ds \le K_R\) for all \(t\ge 0\), hence
\[
I_1 \le T_R^{-\alpha}\theta^{\alpha-1}\int_0^{t-T_R}\|f(u(s))\|\,ds
\le T_R^{-\alpha}K_R\,\theta^{\alpha-1}.
\]

For initial data satisfying \(\|u_{0,n}\|\le R\), there exists a constant \(B_R^f>0\) such that
\[
\|f(u(s,u_{0,n}))\| \le B_R^f,\qquad \forall\,s\ge0.
\]
Thus
\[
I_2 \le B_R^f \int_{t-T_R}^t (t+\theta-s)^{-1}\,ds
= B_R^f \int_{\theta}^{\theta+T_R} \tau^{-1}\,d\tau
= B_R^f \ln\!\Bigl(1+\frac{T_R}{\theta}\Bigr).
\]
Now treat the logarithmic term. Consider the function
\[
\varphi(\theta) = \frac{\ln(1+T_R/\theta)}{\theta^{\alpha-1}}
= \theta^{1-\alpha}\ln\!\Bigl(1+\frac{T_R}{\theta}\Bigr),\quad \theta>0.
\]
Since \(\alpha\in(0,1)\),
\[
\lim_{\theta\to0^+}\varphi(\theta)=0,\qquad
\lim_{\theta\to\infty}\varphi(\theta)=0,
\]
and \(\varphi\) is continuous on \((0,\infty)\), therefore \(\varphi\) attains a maximum \(C_{\alpha,T_R}\). Consequently,
\[
\ln\!\Bigl(1+\frac{T_R}{\theta}\Bigr) \le C_{\alpha,T_R}\,\theta^{\alpha-1}.
\]
Thus
\[
I_2 \le C_{\alpha,T_R} B_R^f\,\theta^{\alpha-1}.
\]

Combining these estimates of $I_i$, $i=1,2$, we have
\[
\begin{aligned}
	\|g'(\theta)\|
	&\le \|A(\theta)\| + C_3(I_1+I_2)\\
	&\le C_3\Bigl( R T_R^{-\alpha} + T_R^{-\alpha}K_R + C_{\alpha,T_R} B_R^f \Bigr)\theta^{\alpha-1}.
\end{aligned}
\]
Define
\[
C_R := C_3\Bigl( \frac{R T_R^{-\alpha} + T_R^{-\alpha}K_R}{B_R^f} + C_{\alpha,T_R} \Bigr),
\]
We finally obtain
\[
\Bigl\|\frac{d}{d\theta}(T_t \varphi_{u_{0,n}})(\theta)\Bigr\|
\le C_R\,B_R^f\,\theta^{\alpha-1},
\qquad \forall\,t\ge T_R,\;\|u_{0,n}\|\le R,\;\theta>0.
\]
\end{proof}

		\subsubsection{ Equi-Lipschitz continuous}
		
		Write $\chi_n(t,\theta):=(T_t \varphi_{u_{0,n}})(\theta)$, so $\chi_n(0,\theta):=S(\theta)u_{0,n}$. By estimate \eqref{ineq4.4},
		\begin{equation}\label{bounded1-2}
			\|\chi_n(t,\theta)\|  \leq C_R B_{R^*}^f N^{\alpha}+R^{*},\quad t\geq T_{R},~0\leq\theta\leq N.
		\end{equation}
		In addition, by Lemma \ref{lemma:4.5}
		$$
		\left\|\frac{d}{d\theta}\chi_n(t,\theta)\right\| \leq  C_R B_{R}^f \theta^{\alpha-1}, \quad t \geq T_R,$$
		For any \(t \geq T_R\) and \(\theta \geq 0\), we have the H\"{o}lder continuity
		\[
		\| \chi_n(t, \theta) - \chi_n(t, 0) \|  \leq C_R B_{R^*}^f \theta^\alpha.
		\]
		For \(\theta \in [0, \delta]\), using the H\"{o}lder estimate, if \(0 \leq \theta_1 < \theta_2 \leq \delta\), then
		\[
		\begin{aligned}
			\|\chi_n(t_n, \theta_2) - \chi_n(t_n, \theta_1)\| &\leq \|\chi_n(t_n, \theta_2) - \chi_n(t_n, 0)\| + \|\chi_n(t_n, \theta_1) - \chi_n(t_n, 0)\|  \\
			&\leq 2C_R B_{R^*}^f \delta^\alpha.
		\end{aligned}
		\]
		Using the bound on the derivative (where now \( \theta^{ \alpha-1} \leq \delta^{ \alpha-1}\)), we have
		\[
		\|\chi_n(t_n, \theta_2) - \chi_n(t_n, \theta_1)\|  \leq C_R B_R^f \delta^{\alpha-1} |\theta_2 - \theta_1|.
		\]
		Therefore, given \(\varepsilon > 0\), first choose \(\delta\) such that \(2C_R B_{R^*}^f \delta^\alpha < \varepsilon/2\), and then for \(\theta \in [\delta, N]\), choose \(\eta = \frac{\varepsilon \delta^{1-\alpha}}{2C_R B_R^f}\). It follows that whenever \(|\theta_2 - \theta_1| < \min\{\delta, \eta\}\), we always have
		\[
		\|\chi_n(t_n, \theta_2) - \chi_n(t_n, \theta_1)\|  < \varepsilon.
		\]
		Thus, for any fixed \(\varepsilon > 0\), the sequence \(\{\chi_n(t_n, \cdot)\}_{n \in \mathbb{N}}\) is uniformly equi-Lipschitz continuous in \(C([\varepsilon, N], L^p(\mathbb{R}^d))\).

		\subsubsection{Spatial decay}
		\begin{lemma}\label{lemma:4.6}
			For any \(t > 0\), \(v \in L^p(\mathbb{R}^d)\), and \(R > 0\), then there exist constants \(C_4, c_4 > 0\) such that
			\begin{equation}
				\|S(t)v\|_{L^p(|x| > R)} \leq C_4 e^{-c_4R^2 / t} \|v\|  + \|v\|_{L^p(|x| > R/2)}.
			\end{equation}
		\end{lemma}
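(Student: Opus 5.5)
We would prove Lemma \ref{lemma:4.6} through the subordination representation $S(t)v=-\int_0^\infty T(\tau)v\,d_\tau\omega(t,\tau)$. This reduces the spatial tail of $S(t)v$ to spatial tails of the heat semigroup $T(\tau)v=G(\tau,\cdot)*v$, averaged against the probability measure $-d_\tau\omega(t,\cdot)$.

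The first step is a tail estimate for the heat semigroup. Split $v=v\mathbf 1_{\{|y|\le R/2\}}+v\mathbf 1_{\{|y|>R/2\}}$. Since $T(\tau)$ is an $L^p$-contraction,
\[
\|T(\tau)(v\mathbf 1_{\{|y|>R/2\}})\|_{L^p(|x|>R)}\le \|v\|_{L^p(|x|>R/2)} .
\]
For the other piece, $|x|>R$ and $|y|\le R/2$ give $|x-y|\ge |x|/2\ge R/2$. Hence for $|x|>R$ we have $|T(\tau)(v\mathbf 1_{\{|y|\le R/2\}})(x)|\le (G(\tau,\cdot)\mathbf 1_{\{|z|\ge R/2\}})*|v|(x)$. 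Young's inequality then bounds its $L^p(|x|>R)$ norm by $\|G(\tau,\cdot)\|_{L^1(|z|\ge R/2)}\|v\|$. The Gaussian tail satisfies $\|G(\tau,\cdot)\|_{L^1(|z|\ge R/2)}\le C e^{-cR^2/\tau}$, with $C,c$ depending only on $d$. This yields
\[
\|T(\tau)v\|_{L^p(|x|>R)}\le Ce^{-cR^2/\tau}\|v\|+\|v\|_{L^p(|x|>R/2)} .
\]

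The second step integrates this bound in $\tau$ using Minkowski's inequality and the fact that $-d_\tau\omega(t,\cdot)$ has total mass $1$. This gives
\[
\|S(t)v\|_{L^p(|x|>R)}\le C\|v\|\Big(-\int_0^\infty e^{-cR^2/\tau}d_\tau\omega(t,\tau)\Big)+\|v\|_{L^p(|x|>R/2)} .
\]
The remaining task is to show $-\int_0^\infty e^{-cR^2/\tau}d_\tau\omega(t,\tau)\le C'e^{-c'R^2/t}$.

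This last inequality is the main obstacle. It requires a moment or tail bound on the subordinating measure, roughly that $\tau$ is of size at most $t$ under $-d_\tau\omega(t,\cdot)$. For the first attempt, split at $\tau=\kappa t$. On $\{\tau\le\kappa t\}$ we have $e^{-cR^2/\tau}\le e^{-cR^2/(\kappa t)}$. On $\{\tau>\kappa t\}$ we need the mass $-\int_{\kappa t}^\infty d_\tau\omega(t,\tau)=\omega(t,\kappa t)$ to be small; this can be estimated by Chernoff/Laplace-transform arguments from $\hat\omega(\lambda,\tau)=e^{-\tau\varphi^m(\lambda)}/\lambda$. Because $m=1+k$ gives $\varphi^m(\lambda)\le\lambda$, the propagation variable is stochastically dominated by $t$. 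For the pure heat case this is exact, since there $\omega(t,\cdot)$ is a unit jump at $\tau=t$. A cleaner route uses this domination directly, $-d_\tau\omega(t,\cdot)$ being supported in $[0,t]$. That makes $e^{-cR^2/\tau}\le e^{-cR^2/t}$ on the whole support and gives the claim with $C_4=C$, $c_4=c$. We would try to prove this support property first, via a Paley--Wiener type argument on $\lambda\mapsto\lambda\hat\omega(\lambda,\tau)=e^{-\tau\varphi^m(\lambda)}$ together with $\varphi^m(\lambda)\le\lambda$.

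If the support property fails, the fallback is a weaker statement. We would absorb the contribution of large $\tau$ into the constant, or restrict to $t$ in a bounded range, which is all that the later compactness argument uses.
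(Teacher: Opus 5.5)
Your Steps 1 and 2 are sound and follow the paper's outline. You use the same splitting of $v$, contractivity for the far piece, and then average against $\mu_t=-d_\tau\omega(t,\cdot)$. Bounding the near piece by Young's inequality with $\|G(\tau,\cdot)\|_{L^1(|z|\ge R/2)}$ is cleaner than the paper's H\"older-plus-rescaling computation.

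The gap is in the last step. The support property you plan to prove, $\operatorname{supp}\mu_t\subset[0,t]$, is false, because $\varphi^m(\lambda)\le\lambda$ points the wrong way. For the admissible kernel $k\equiv k_0>0$ one has $\varphi^m(\lambda)=\lambda/(1+k_0)$, $S(t)=T((1+k_0)t)$ and $\mu_t=\delta_{(1+k_0)t}$.

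In general, $\omega(t,\tau)=\mathbb{P}(X_\tau\le t)$ for the subordinator $X$ with Laplace exponent $\varphi^m$, and $\mu_t$ is the law of its passage time over the level $t$. The bound $\varphi^m(\lambda)\le\lambda$ says $X_\tau$ is dominated by $\tau$ in Laplace order, so the passage time tends to exceed $t$.

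What you need is a lower bound. By L\'evy--Khintchine, $\varphi^m(\lambda)=\kappa\lambda+\psi(\lambda)$ with $\psi$ Bernstein and $\kappa=\lim_{\lambda\to\infty}\varphi^m(\lambda)/\lambda=1/(1+k(0^+))$. Hence $e^{-\tau\varphi^m(\lambda)}$ is the Laplace transform of a measure supported in $[\kappa\tau,\infty)$. It follows that $\omega(t,\tau)=0$ for $\tau>t/\kappa$, i.e.\ $\operatorname{supp}\mu_t\subset[0,t/\kappa]$. This is exactly what the paper invokes (via Proposition~4.10 of its reference) to get $e^{-c'R^2/\tau}\le e^{-c'\kappa R^2/t}$ and $c_4=c'\kappa$. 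With it, your argument closes with $c_4=c/(1+k(0^+))$.

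Your fallbacks do not repair this.
\begin{itemize}
\item After splitting at $\tau=\kappa t$, the piece $\int_{\tau>\kappa t}e^{-cR^2/\tau}\,\mu_t(d\tau)$ is only bounded by $\omega(t,\kappa t)$. That mass does not depend on $R$, so no Chernoff bound on it produces $e^{-c'R^2/t}$.
\item Absorbing this piece into the constant makes $C_4$ depend on $R$. That trivializes the statement, since already $\|S(t)v\|\le\|v\|$, and it destroys $A_1(R)\to0$ in Lemma~\ref{lemma:4.7}.
\item Restricting $t$ to a bounded range does not help as $R\to\infty$.
\end{itemize}

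Note also that the drift argument needs $k(0^+)<\infty$. The paper allows $k$ unbounded at $0$, e.g.\ fractional kernels. In that case $\kappa=0$, the subordinator is driftless, and $\omega(t,\tau)>0$ for every $\tau$. Then $-\int_0^\infty e^{-cR^2/\tau}\,d_\tau\omega(t,\tau)\ge e^{-cR^2/(At)}\,\omega(t,At)$ for every $A>0$. Taking $A>c/c'$ and $R\to\infty$ excludes any bound $C'e^{-c'R^2/t}$ with $R$-independent constants. So in that regime neither your reduction nor the paper's cited support property gives the lemma, and you should make the hypothesis $k(0^+)<\infty$ explicit.
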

		\begin{proof}
			First, the heat semigroup \(T(\tau)=e^{\tau \Delta}\) has the kernel \(K_\tau(x) = (4\pi\tau)^{-d/2} e^{-|x|^2/(4\tau)}\). For any \(v \in L^p\), decompose \(v\) as \(v = v_1 + v_2\), where \(v_1 = v \chi_{|y| \leq R/2}\) and \(v_2 = v \chi_{|y| > R/2}\). Then
			\[
			e^{\tau \Delta}v = e^{\tau \Delta}v_1 + e^{\tau \Delta}v_2.
			\]
			
			For \(e^{\tau \Delta}v_2\), by the contractivity of the heat semigroup, we have
			\[
			\|e^{\tau \Delta}v_2\|_{L^p(|x| > R)} \leq \|e^{\tau \Delta}v_2\|_{L^p} \leq \|v_2\|_{L^p} = \|v\|_{L^p(|x| > R/2)}.
			\]
			
			For \(e^{\tau \Delta}v_1\), when \(|x| > R\), we have \(|x - y| \geq |x| - |y| > R/2\). Using the Gaussian estimate \(K_\tau(x-y) \leq C \tau^{-d/2} e^{-|x-y|^2/(4\tau)}\), and noting that for \(|x| > R\) we have \(|x - y| \geq |x| - R/2 \geq |x|/2\), it follows that
			\[
			K_\tau(x-y) \leq C \tau^{-d/2} e^{-|x|^2/(16\tau)}.
			\]
			Hence, we have
			\[
			|e^{\tau \Delta}v_1(x)| \leq C \tau^{-d/2} e^{-|x|^2/(16\tau)} \int_{|y| \leq R/2} |v(y)| \, dy \leq C^1 \tau^{-d/2} e^{-|x|^2/(16\tau)} \|v\|_{L^p} (R^d)^{1/p'}.
			\]
			Taking the \(L^p\) norm over \(|x| > R\), we obtain
			\[
			\|e^{\tau \Delta}v_1\|_{L^p(|x| > R)} \leq C^1 \tau^{-d/2} (R^d)^{1/p'} \|v\|  \left( \int_{|x| > R} e^{-p|x|^2/(16\tau)} \, dx \right)^{1/p}.
			\]
			Computing the integral,
			\[
			\int_{|x| > R} e^{-p|x|^2/(16\tau)} \, dx = \omega_d \int_R^\infty r^{d-1} e^{-p r^2/(16\tau)} \, dr = \omega_d \tau^{d/2} \int_{R/\sqrt{\tau}}^\infty j^{d-1} e^{-p j^2/16} \, dj,
			\]
			where \(\omega_d\) is the surface area of the unit sphere and \(r = \sqrt{\tau} j\). For \(j \ge R/\sqrt{\tau}\), we have \(e^{-p j^2/16} \le e^{-p R^2/(32\tau)} e^{-p j^2/32}\). Therefore,
			\[
			\int_{R/\sqrt{\tau}}^\infty j^{d-1} e^{-p j^2/16} \, dj \le e^{-p R^2/(32\tau)} \int_0^\infty j^{d-1} e^{-pj^2/32} \, dj \le C' e^{-p R^2/(32\tau)}.
			\]
			Substituting back, we get
			\begin{align*}
				\|e^{\tau \Delta}v_1\|_{L^p(|x|>R)} \le & C^1 {C'}^{1/p} \tau^{-d/2} \tau^{d/(2p)} R^{d/p'} e^{-R^2/(32\tau)} \|v\| \\ \le & C'' \tau^{-d/(2p')} R^{d/p'} e^{-R^2/(32\tau)} \|v\| ,
			\end{align*}
			where \(C, C^1, C', C''\) are constants, and \(-d/2 + d/(2p) = -d/(2p')\).
			
			For any fixed \(c' < \frac{1}{32}\), consider
			\[
			\tau^{-d/(2p')} e^{-R^2/(32\tau)} = e^{-c' R^2/\tau} \cdot \tau^{-d/(2p')} e^{-(1/32 - c')R^2/\tau}.
			\]
			Let \(s = \tau / R^2\). Then
			\[
			\tau^{-d/(2p')} e^{-(1/32 - c')R^2/\tau} = R^{-d/p'} s^{-d/(2p')} e^{-(1/32 - c')/s}.
			\]
			Since the function \(f(s) = s^{-d/(2p')} e^{-(1/32 - c')/s}\) is bounded on \((0,\infty)\) (tending to 0 as \(s\to 0\) and as \(s\to\infty\)), we set
			\[
			K(c') = \sup_{s>0} s^{-d/(2p')} e^{-(1/32 - c')/s} < \infty.
			\]
			Thus,
			\[
			\tau^{-d/(2p')} e^{-R^2/(32\tau)} \le K(c') R^{-d/p'} e^{-c' R^2/\tau}.
			\]
			Substituting into the estimate for \(v_1\),
			\[
			\|e^{\tau \Delta}v_1\|_{L^p(|x|>R)} \le C'' \cdot K(c') R^{d/p'} R^{-d/p'} e^{-c' R^2/\tau} \|v\| \le C_4 e^{-c' R^2/\tau} \|v\| .
			\]
			
			Combining the contribution from \(v_2\), we obtain
			\[
			\|e^{\tau \Delta}v\|_{L^p(|x|>R)} \le C_4 e^{-c' R^2/\tau} \|v\|  + \|v\|_{L^p(|x|>R/2)}.
			\]
			By Proposition 4.10 in \cite{Gripenberg1990}, the measure \(\mu_t(d\tau)=-d_\tau\omega(t,\tau)\) is a positive measure supported on \([0,t/\kappa]\). Applying this to the operator \(S(t)v = \int_0^{t/\kappa} e^{\tau \Delta}v \,\mu_t(d\tau)\), and noting that \(e^{-c' R^2/\tau} \le e^{-c' \kappa R^2/t}\), we have
			\[
			\|S(t)v\|_{L^p(|x|>R)} \le C_4 e^{-c' \kappa R^2/t} \|v\|  + \|v\|_{L^p(|x|>R/2)}.
			\]
			Setting $c_4 = c' \kappa$ yields the desired estimate. \end{proof}
		
		\begin{lemma}\label{lemma:4.7}  Let $\{u_{0,n}\}_{n\in\mathbb{N}}\subset L^p(\mathbb{R}^d)$ have uniform spatial decay, i.e., for any $\varepsilon>0$, there exists $R_0>0$ such that
			\[
			\sup_n \|u_{0,n}\|_{L^p(|x|>R_0)}<\varepsilon.
			\]
			Assume that the nonlinearity $f$ satisfies ($F_l$) and the dissipativity condition ($F_d$). Then there exists a constant $T_0>0$ such that for all $t_n\le T_0$, and for any $\eta>0$ and $N>0$, there exist $R>0$ and $N_1\in\mathbb{N}$ such that for all $n\ge N_1$ and $\theta\in[0,N]$,
			\[
			\|\chi_n(t_n,\theta)\|_{L^p(|x|>R)}<\eta.
			\]
		\end{lemma}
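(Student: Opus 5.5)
Since $t_n\le T_0$ and $\theta\in[0,N]$, every time that enters $\chi_n(t_n,\theta)=S(t_n+\theta)u_{0,n}+\int_0^{t_n}S(t_n+\theta-s)f(u(s,u_{0,n}))\,ds$ lies in the compact window $[0,T_0+N]$. The plan is to prove a uniform tail estimate directly from this representation, using Lemma \ref{lemma:4.6} for each term. The bounds available on this window are $\|u(s,u_{0,n})\|\le B_R$ and $\|f(u(s,u_{0,n}))\|\le B_R^f$; they depend only on the common bound of $\|u_{0,n}\|$ and not on $n$.

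\textbf{Linear part.} Lemma \ref{lemma:4.6} with radius $\varrho$ gives
\[
\|S(t_n+\theta)u_{0,n}\|_{L^p(|x|>\varrho)}\le C_4e^{-c_4\varrho^2/(T_0+N)}\sup_n\|u_{0,n}\|+\sup_n\|u_{0,n}\|_{L^p(|x|>\varrho/2)} .
\]
Taking $\varrho\ge 2R_0$ makes the second term smaller than $\eta/4$ by the assumed uniform decay of the data. Enlarging $\varrho$ if needed makes the first term smaller than $\eta/4$.

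\textbf{Nonlinear part.} Lemma \ref{lemma:4.6} is applied under the integral:
\[
\Big\|\int_0^{t_n}S(t_n+\theta-s)f(u(s))\,ds\Big\|_{L^p(|x|>\varrho)}\le C_4T_0B_R^fe^{-c_4\varrho^2/(T_0+N)}+\int_0^{t_n}\|f(u(s,u_{0,n}))\|_{L^p(|x|>\varrho/2)}\,ds .
\]
This requires tails of $f(u(s))$, so I need uniform spatial decay of $u(s,u_{0,n})$ for $s\in[0,T_0]$. I would get it by a bootstrap on $y_n(s,r):=\|u(s,u_{0,n})\|_{L^p(|x|>r)}$. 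Lemma \ref{lemma:4.6} applied to the mild formula \eqref{Mild-solution} gives
\[
y_n(s,r)\le C_4(R+T_0B_R^f)e^{-c_4r^2/T_0}+\|u_{0,n}\|_{L^p(|x|>r/2)}+\int_0^s\|f(u(\tau))\|_{L^p(|x|>r/2)}\,d\tau .
\]

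\textbf{Main obstacle.} The nonlocal bound $(F_l)$ alone does not turn tails of $u$ into tails of $f(u)$. I would first try the Nemytskii structure natural for the examples: $f$ acts pointwise, with $f(0)=0$ and $|f(u)(x)|\le L(R)|u(x)|$. Then
\[
\|f(u)\|_{L^p(|x|>r/2)}\le L\,y_n(\tau,r/2).
\]
An iteration over dyadic radii $r,r/2,\dots$ on the short interval $[0,T_0]$ then closes. Choosing $T_0$ with $L(B_R)T_0<1/2$ makes the integral term contract, so a Gronwall-type inequality in the radius variable yields $\sup_n\sup_{s\le T_0}y_n(s,r)\to0$ as $r\to\infty$; this smallness condition is what fixes the constant $T_0$ in the statement. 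If $f$ is not pointwise, I would instead assume, as an implicit hypothesis, the localized Lipschitz bound $\|f(u)\|_{L^p(|x|>r)}\le L\|u\|_{L^p(|x|>r)}$, and the argument is identical.

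\textbf{Conclusion.} With the decay of $u$ in hand, the integral term is bounded by $T_0L\sup_{s}y_n(s,\varrho/2)<\eta/4$ for $\varrho$ large. Summing the four pieces gives $\|\chi_n(t_n,\theta)\|_{L^p(|x|>R)}<\eta$ with $R=\varrho$, uniformly in $\theta\in[0,N]$. The choice of $R$ is uniform in $n$, so $N_1=1$ already suffices; the $N_1$ in the statement is only needed if the decay of the data is assumed merely asymptotically in $n$.
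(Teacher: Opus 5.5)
Your proposal follows the paper's proof essentially step for step: the same split of $\chi_n(t_n,\theta)$ into linear and Duhamel parts estimated by Lemma~\ref{lemma:4.6}, and the same dyadic-radius bootstrap for the tails of $u(s,u_{0,n})$. In the paper this bootstrap also rests on an extra tail hypothesis on $f$, namely $\|f(u)\|_{L^p(|x|>r)}\le L_R\|u\|_{L^p(|x|>r)}+\delta(r)$ with $\delta(r)\to0$, together with a tacit uniform bound on $\|u_{0,n}\|$.

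The only differences are two. First, the paper closes the iteration with the factor $(L_Rs)^k/k!$ coming from iterated time integrals, so it needs no smallness of $T_0$; your condition $L(B_R)T_0<1/2$ is harmless for the statement as written, just unnecessary. Second, your pointwise (Nemytskii) option is vacuous: such an $f$ satisfying $(F_l)$ is globally Lipschitz with $\|f(u)\|\le L\|u\|$, hence $\langle f(u),|u|^{p-2}u\rangle\ge -L\|u\|^p$, which contradicts $(F_d)$ when $\sigma>2$. Only your fallback localized-Lipschitz hypothesis actually carries the argument.
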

		\begin{proof}  First, we prove that on any fixed interval $[0,T]$, the solutions $u_n(s)$ have uniform spatial decay. Here we take $T=T_0+N$. Denote $\phi_n(s,R)=\|u_n(s)\|_{L^p(|x|>R)}$. From the integral equation
			\[
			u_n(s)=S(s)u_{0,n}+\int_0^s S(s-\tau)f(u_n(\tau))\,\mathrm{d}\tau,
			\]
			and using Lemma \ref{lemma:4.6}, we obtain
			\[
			\begin{aligned}
				\phi_n(s,R)&\le C_4 e^{-c_4 R^2/s}\|u_{0,n}\| +\|u_{0,n}\|_{L^p(|x|>R/2)}\\
				&\quad +\int_0^s \Bigl(C_4 e^{-c_4R^2/(s-\tau)}\|f(u_n(\tau))\| +\|f(u_n(\tau))\|_{L^p(|x|>R/2)}\Bigr)\mathrm{d}\tau.
			\end{aligned}
			\]
			By uniform boundedness, $\|u_{0,n}\| \le B_R$ and $\|f(u_n(\tau))\| \le B_R^f$. Moreover, from the assumption $f$, there exist a constant $L_R>0$ and a function $\delta(r)\to0$ (as $r\to\infty$) such that
			\[
			\|f(u)\|_{L^p(|x|>r)}\le L_R\|u\|_{L^p(|x|>r)}+\delta(r).
			\]
			Thus, we have
			\begin{equation}\label{A1(R)}
				\phi_n(s,R)\le A_1(R)+L_R\int_0^s \phi_n(\tau,R/2)\,\mathrm{d}\tau,
			\end{equation}
			where
			\[
			A_1(R)=C_4 e^{-c_4R^2/T}B_R+\rho(R/2)+C_4 B_R^f \int_0^T e^{-c_4R^2/(T-\tau)}\mathrm{d}\tau+T\delta(R/2),
			\]
			with $\rho(r)=\sup_n\|u_{0,n}\|_{L^p(|x|>r)}$, and $\rho(r)\to0$. Since $e^{-c_4R^2/(T-\tau)}\le e^{-c_4R^2/T}$ and the integral is bounded, it is easy to see that $A_1(R)\to0$ as $R\to\infty$.
			
			Iterating (\ref{A1(R)}) by $k$-times yields
			\[
			\phi_n(s,R)\le \sum_{j=0}^{k-1}\frac{(L_R s)^j}{j!}A_1(R/2^j)+\frac{(L_R s)^k}{k!}\sup_{\tau\in[0,T]}\phi_n(\tau,R/2^k).
			\]
			Since $\sup_{\tau}\phi_n(\tau,R/2^k)\le B_R$ and $s\le T$, for sufficiently large $k$, the second term is less than $\eta/4$ (independent of $R$). Fix such a $k$, and then choose $R$ sufficiently large so that $\sum_{j=0}^{k-1}\frac{(L_RT)^j}{j!}A_1(R/2^j)<\eta/4$. Then for all $n$ and $s\in[0,T]$, we have $\phi_n(s,R)<\eta/2$. This proves the uniform spatial decay of $u_n(s)$ on $[0,T]$.
			
			Now return to $\chi_n(t_n,\theta)$. Since $t_n\le T_0$ and $\theta\le N$, we have $t_n+\theta\in[0,T]$. Applying Lemma \ref{lemma:4.6} to the first part,
			\[
			\|S(t_n+\theta)u_{0,n}\|_{L^p(|x|>R)}\le C_4 e^{-c_4R^2/(t_n+\theta)}\|u_{0,n}\|_{L^p}+\|u_{0,n}\|_{L^p(|x|>R/2)}.
			\]
			Because $t_n+\theta\le T$, we have $e^{-c_4R^2/(t_n+\theta)}\le e^{-c_4R^2/T}$, and $\|u_{0,n}\|_{L^p}$ is bounded, so we can take $R$ sufficiently large so that the first term is less than $\eta/4$. Moreover, by the uniform decay of the initial data, we can choose $R$ large enough so that the second term is also less than $\eta/4$.

			For the integral term, for each $s\in[0,t_n]$, we also have
			\[
			\|S(t_n+\theta-s)f(u_n(s))\|_{L^p(|x|>R)}\le C_4 e^{-c_4R^2/(t_n+\theta-s)} B_R^f + \|f(u_n(s))\|_{L^p(|x|>R/2)}.
			\]
 	Since the denominator $t_n+\theta-s$ satisfies  $\theta \;\le\; t_n+\theta-s \;\le\; t_n+\theta \;\le\; T_0 + N = T$, we obtain
			$e^{-c_4R^2/(t_n+\theta-s)} \le e^{-c_4R^2/T}$. Hence, after integration, the first term is bounded by $t_n C_4 B_R^f e^{-c_4R^2/T}\le T_0 C_4 B_R^f e^{-c_4R^2/T}$. Choosing $R$ large makes this less than $\eta/4$.
			
			For the second term $\|f(u_n(s))\|_{L^p(|x|>R/2)}$, due to the uniform spatial decay of $u_n(s)$ and the properties of $f$, there exists $R$ such that for all $s\in[0,t_n]$, $\|f(u_n(s))\|_{L^p(|x|>R/2)}<\eta/(4T_0)$. Consequently,  this term is less than $\eta/4$ after taking integration.
			
			Combining the estimates above, by taking $R$ sufficiently large so that all four terms are less than $\eta/4$, we obtain
			\[
			\|\chi_n(t_n,\theta)\|_{L^p(|x|>R)}<\eta.
			\]
			Since all estimates are uniform in $n$ and $\theta$, the proposition is proved. \end{proof}

		\subsubsection{Finite-dimensional approximation}
		\begin{lemma}\label{lemma:4.8}
			There exist a constant \(C_5 > 0\)  such that for any \(t > 0\),
			\begin{equation*}
				\|\nabla S(t)\| \le C_5 \bigl[(1 * m)(t)\bigr]^{-1/2}.
			\end{equation*}
		\end{lemma}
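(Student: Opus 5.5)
I would prove the estimate through the subordination representation $S(t)v=-\int_0^\infty T(\tau)v\,d_\tau\omega(t,\tau)$, in which $-d_\tau\omega(t,\cdot)$ is a probability measure on $[0,\infty)$. Since $\nabla$ commutes with the heat semigroup, I would write $\nabla S(t)v=-\int_0^\infty \nabla T(\tau)v\,d_\tau\omega(t,\tau)$. For the Gaussian semigroup on $L^p(\mathbb R^d)$ one has the standard gradient bound $\|\nabla T(\tau)v\|\le C\tau^{-1/2}\|v\|$. This follows from $\|\nabla G(\tau,\cdot)\|_{L^1}=c_d\tau^{-1/2}$ and Young's inequality, and it holds for every $1\le p<\infty$. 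Using Minkowski's integral inequality, this reduces the claim to the scalar estimate
$$
-\int_0^\infty \tau^{-1/2}\,d_\tau\omega(t,\tau)\le C\bigl[(1*m)(t)\bigr]^{-1/2},
$$
so $\|\nabla S(t)\|\le C\,\mathbb E_t[\tau^{-1/2}]$, where $\mathbb E_t$ denotes expectation with respect to $\mu_t=-d_\tau\omega(t,\cdot)$.

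The key step is to compute this negative moment using the function $s(t,\mu)=-\int_0^\infty e^{-\mu\tau}d_\tau\omega(t,\tau)$, which solves $s+\mu\,m*s=1$. The Gamma-function identity $\tau^{-1/2}=\Gamma(1/2)^{-1}\int_0^\infty \mu^{-1/2}e^{-\mu\tau}\,d\mu$ together with Tonelli gives
$$
-\int_0^\infty\tau^{-1/2}d_\tau\omega(t,\tau)=\frac{1}{\sqrt\pi}\int_0^\infty \mu^{-1/2}s(t,\mu)\,d\mu .
$$
It then suffices to show that $s(t,\mu)\le \bigl(1+\mu\,(1*m)(t)\bigr)^{-1}$, or some bound of this type with a constant. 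Granting this, the change of variables $\mu=x/(1*m)(t)$ gives $\int_0^\infty \mu^{-1/2}(1+\mu a)^{-1}d\mu=\pi a^{-1/2}$ with $a=(1*m)(t)$, and the lemma follows with $C_5=C\sqrt\pi$.

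The main obstacle is the decay bound for $s(t,\mu)$. Here I would use the facts that $m=1+k$ is completely monotone, hence completely positive. The relaxation function $s(\cdot,\mu)$ is then nonnegative and nonincreasing in $t$ (Prüss, Proposition 4.5/Lemma 4.1). From $s(t)+\mu\int_0^t m(t-r)s(r)\,dr=1$ and monotonicity, $\int_0^t m(t-r)s(r)\,dr\ge s(t)(1*m)(t)$. Therefore $1\ge s(t,\mu)\bigl(1+\mu(1*m)(t)\bigr)$, which is exactly the needed inequality with constant $1$. The points to check carefully are the monotonicity of $s(\cdot,\mu)$ under the standing assumption that $k$ is $\mathcal{CMF}$ (possibly singular at $0$), and the justification of interchanging $\nabla$ with the Stieltjes integral. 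For the latter I would argue by density with $v\in W^{1,p}$ and closedness of $\nabla$, using the resulting uniform bound. Near $\tau=0$, the integrability of $\tau^{-1/2}$ against $\mu_t$ is ensured a posteriori by the finiteness of the moment formula above.
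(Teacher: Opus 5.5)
Your proof is correct and follows the same route as the paper. Subordination, the heat-kernel bound $\|\nabla T(\tau)\|\le C\tau^{-1/2}$ and Minkowski's inequality reduce everything to the negative moment $\int_0^\infty\tau^{-1/2}\mu_t(d\tau)$. The only difference is that the paper cites an external result (Lemma A.2 of Huang et al.) for the bound $\Gamma(1/2)\,[(1*m)(t)]^{-1/2}$ on this moment, whereas you prove it directly via the Gamma-function identity, Tonelli, and $s(t,\mu)\le\bigl(1+\mu(1*m)(t)\bigr)^{-1}$, and you arrive at the same constant $\sqrt{\pi}$. The monotonicity you need for that last bound follows immediately from $s(\cdot,\mu)=1-\mu\,(1*r_\mu)$ with $r_\mu\ge 0$, i.e.\ from complete positivity of $m$.
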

		\begin{proof}
From the gradient estimate of the heat semigroup \(\|\nabla e^{\tau \Delta} v\|  \le C_p \tau^{-1/2} \|v\| \) for \(1 < p < \infty\) for $v\in L^p(\mathbb R^d)$, we obtain
			\[
			\|\nabla S(t)v\|  \le C_p \|v\|  \int_0^\infty \tau^{-1/2} \mu_t(\mathrm d\tau) =: C_p \|v\|  I(t).
			\]

	 From Lemma A.2 of  \cite{Huang2025}, we know that
		\[
		I(t) \leq \Gamma\!\left(\frac12\right)\bigl[(1 * m)(t)\bigr]^{-1/2}.
		\]
	Therefore, we have
		\(
		\|\nabla S(t)\|  \leq C_p\sqrt{\pi}\,[(1*m)(t)]^{-1/2},
		\) which implies the desired estimate.
		\end{proof}
			
		\begin{lemma}\label{lemma:4.9}
			For any fixed \(\theta>0\), there exists a constant \(C_\theta>0\) such that for all \(n\) and \(t_n\ge T_R\),
			\[
			\|\nabla \chi_n(t_n,\theta)\|  \le C_\theta.
			\]
		\end{lemma}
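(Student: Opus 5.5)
We apply the gradient bound of Lemma \ref{lemma:4.8} directly to the representation
\[
\chi_n(t_n,\theta)=S(t_n+\theta)u_{0,n}+\int_0^{t_n}S(t_n+\theta-s)f(u(s,u_{0,n}))\,ds,
\]
and estimate the two pieces separately. Since $m=1+k$ with $k\ge0$, we have $(1*m)(t)\ge t$. Lemma \ref{lemma:4.8} therefore gives the simpler bound $\|\nabla S(t)\|\le C_5 t^{-1/2}$ for all $t>0$, and this is the form we use.

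For the linear part, $\|\nabla S(t_n+\theta)u_{0,n}\|\le C_5(t_n+\theta)^{-1/2}\|u_{0,n}\|\le C_5\theta^{-1/2}R$. For the integral part, we move $\nabla$ inside the Bochner integral. This is justified because $\nabla S(t_n+\theta-s)$ is bounded uniformly in $s\in[0,t_n]$: its argument is always at least $\theta>0$, and $\nabla$ is closed. We then split $[0,t_n]=[0,t_n-T_R]\cup[t_n-T_R,t_n]$, exactly as in the proofs of Lemmas \ref{lemma:4.2} and \ref{lemma:4.5}.

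On the first interval we have $t_n+\theta-s\ge T_R+\theta$. Using the uniform integral bound $\int_0^t\|f(u(s,u_0))\|ds\le K_R$ from the proof of Lemma \ref{lemma:4.2}, this piece is at most $C_5(T_R+\theta)^{-1/2}K_R$. On the second interval we use the pointwise bound $\|f(u(s,u_{0,n}))\|\le B_R^f$, which gives
\[
C_5B_R^f\int_{\theta}^{\theta+T_R}\sigma^{-1/2}\,d\sigma\le 2C_5B_R^f\sqrt{T_R}.
\]
Alternatively, we may bound the whole integral crudely by $C_5B_R^f\int_0^{t_n}(t_n+\theta-s)^{-1/2}ds$. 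That version is not uniform in $t_n$, however, which is why we split. Summing the three contributions yields
\[
C_\theta=C_5\bigl(R\theta^{-1/2}+K_R(T_R+\theta)^{-1/2}+2B_R^f\sqrt{T_R}\bigr),
\]
which is independent of $n$ and of $t_n\ge T_R$.

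The main point requiring care is uniformity in $t_n$, since $t_n\to\infty$ is allowed. The integral bound $K_R$, rather than the sup bound $B_R^f$, is what controls the long history. If one prefers not to rely on $K_R$, a fallback is available when the kernel makes $(1*m)(t)$ grow faster than $t$. In that case one uses the sharper decay $[(1*m)(\sigma)]^{-1/2}$, and integrability of $\sigma\mapsto[(1*m)(\sigma)]^{-1/2}$ at infinity would again give a bound uniform in $t_n$. For the case $m=1+k$ with integrable $k$, where $(1*m)(t)\sim t$, the $K_R$ route is the one we would rely on.
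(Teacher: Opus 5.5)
Your proof is correct and is essentially the paper's: both combine Lemma~\ref{lemma:4.8} with $(1*m)(t)\ge t$, bound the linear part by the decay at $\theta$, and split the Duhamel integral into a recent window handled by the pointwise bound $B_R^f$ and a long history handled by the integral bound $K_R$ from the proof of Lemma~\ref{lemma:4.2}; the only difference is that the paper uses the window $[t_n-1,t_n]$ (bounding $\|\nabla S\|$ there by its value at $\theta$), whereas you use $[t_n-T_R,t_n]$ and integrate $\sigma^{-1/2}$. Drop the closing fallback, though: integrability of $[(1*m)(\sigma)]^{-1/2}$ at infinity would need superquadratic growth of $(1*m)$, while for $m=1+k$ with $k$ nonincreasing one has $(1*m)(t)\le(1+k(1))t+\int_0^1k$ for $t\ge1$, so the bound that is uniform in $t_n$ genuinely rests on $K_R$, just as in the paper.
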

 	
		\begin{proof}
Since \(m\) is completely positive, it means that \(m\ge0\), hence \((1*m)(\tau) \) is non-decreasing with respect to \(\tau\). Thus for any \(\tau\ge\tau_0>0\), it yields \((1*m)(\tau)\ge(1*m)(\tau_0)\), i.e., \([(1*m)(\tau)]^{-1/2}\le[(1*m)(\tau_0)]^{-1/2}\), and consequently from lemma \ref{lemma:4.8}, we get
	\begin{equation}\label{tau}
		\|\nabla S(\tau)\| \le C_5[(1*m)(\tau_0)]^{-1/2},\qquad\forall\,\tau\ge\tau_0.
\end{equation}
Given that \(\|u_{0,n}\| \le B_R\), taking \(\tau_0=\theta\) yields
		\[
		\begin{aligned}
			\|\nabla S(t_n+\theta)u_{0,n}\|
			 \le \|\nabla S(t_n+\theta)\| \|u_{0,n}\|
			 \le C_5 B_R [(1*m)(\theta)]^{-1/2} =: C_1(\theta).
		\end{aligned}
		\]

		Set
		\[
		I:=\int_0^{t_n}\nabla S(t_n+\theta-s)f(u(s))\,ds,
		\]
		where \(u(s)=u(s,u_{0,n})\) is the solution of equation (1.1).
		Let \(a_n=\max(0,\,t_n-1)\) and split the integral as
		\[
		I_1:=\int_{a_n}^{t_n}\nabla S(t_n+\theta-s)f(u(s))\,\mathrm{d}s,\qquad
		I_2:=\int_0^{a_n}\nabla S(t_n+\theta-s)f(u(s))\,\mathrm{d}s.
		\]
		When \(t_n\le 1\), \(a_n=0\), so \(I_2=0\) and \(I_1=I\); when \(t_n>1\), \(a_n=t_n-1\), which reduces to the intervals \([t_n-1,t_n]\) and \([0,t_n-1]\). This decomposition is valid for all \(t_n\ge0\).
		
	Taking \(r=t_n-s\), for \(s\in[a_n,t_n]\) we have \(r\in[0,\,t_n-a_n]\subset[0,1]\), and
		\[
		I_1=\int_0^{t_n-a_n}\nabla S(r+\theta)f(u(t_n-r))\,\mathrm{d}r.
		\]
		Let \(\tau_0=\theta\) in \eqref{tau}, we obtain
		\[
		\|\nabla S(r+\theta)\| \le C_5[(1*m)(\theta)]^{-1/2}.
		\]
		Since there exists a constant \(B_R^f\) such that \(\|f(u(t))\| \le B_R^f\) and the integration length satisfies \(t_n-a_n\le1\), we obtain
		\[
		\begin{aligned}
			\|I_1\|
			 \le C_5 B_R^f\int_0^{t_n-a_n}[(1*m)(\theta)]^{-1/2}\,dr
			 \le C_5 B_R^f[(1*m)(\theta)]^{-1/2} =: C_2(\theta).
		\end{aligned}
		\]
		
		If \(t_n>1\), then \(a_n=t_n-1\). Let \(r=t_n-s\); now \(s\in[0,t_n-1]\) corresponding to \(r\in[1,t_n]\), it follows that
		\[
		I_2=\int_1^{t_n}\nabla S(r+\theta)f(u(t_n-r))\,\mathrm{d}r.
		\]
		For \(r\ge1\) we have \(r+\theta\ge1+\theta\); taking \(\tau_0=1+\theta\) in \eqref{tau} gives
		\[
		\|\nabla S(r+\theta)\| \le C_5[(1*m)(1+\theta)]^{-1/2},
		\]
		which is independent of \(r\). Factoring out the constant and changing the variable back to \(s=t_n-r\), by Lemma \ref{lemma:4.2}, we have
		\[
		\|I_2\| \le C_5[(1*m)(1+\theta)]^{-1/2}\int_0^{t_n-1}\|f(u(s))\|ds \le C_5 K_R [(1*m)(1+\theta)]^{-1/2}=: C_3(\theta).
		\]

		If \(t_n\le1\), then \(I_2=0\) and we simply set \(C_3(\theta):=0\), the estimate still holds.
	
 Now, from the definition of \(\chi_n\),
		\[
		\chi_n(t,\theta)=S(t+\theta)u_{0,n}+\int_0^t S(t+\theta-s)f(u(s))\,\mathrm{d}s,
		\]
Taking the spatial gradient derivative $\nabla$ into above identity, regardless of whether \(t_n\le1\) or \(t_n>1\),  we always have
		\[
		\begin{aligned}
			\|\nabla\chi_n(t_n,\theta)\|
			&\le C_1(\theta)+C_2(\theta)+C_3(\theta) .
		\end{aligned}
		\]
Since \(m=k+1\) and \(k\) is nonnegative, we have \(m\ge1\); hence \((1*m)(\theta)\ge\theta>0\) and \((1*m)(1+\theta)\ge1+\theta>0\), so all constants $C_i(\theta)$, $i=1,2,3$, are finite and positive, which are independent of \(n\) and of the particular \(t_n\ge T_R\). \end{proof}

		Choose a cutoff function \(\xi_R \in C_c^\infty(\mathbb{R}^d)\) such that \(\xi_R(x) = 1\) for \(|x| \leq R\), \(\xi_R(x) = 0\) for \(|x| \geq 2R\), and \(0 \leq \xi_R \leq 1\). Consider the bounded domain \(Q_{2R} = \{x \in \mathbb{R}^d : |x| < 2R\}\). By Lemma \ref{lemma:4.8}, the set \(\{\xi_R \chi_n(t_n, \theta) : n \geq N_1, \theta \in [0, N]\}\) is bounded in \(W^{1,p}(Q_{2R})\) since the gradient is bounded in \(L^p\) and multiplication by a smooth cutoff preserves regularity. Due to the compact embedding \(W^{1,p}(Q_{2R}) \hookrightarrow L^p(Q_{2R})\), there exists a finite-rank projection operator \(P_i : L^p(Q_{2R}) \to L^p(Q_{2R})\) such that for any \(\varepsilon > 0\), there exist \(i \in \mathbb{N}\) and \(N_2 \geq N_1\) satisfying
		\[
		\|(I - P_i)(\xi_R \chi_n(t_n, \theta))\|_{L^p(Q_{2R})} < \eta, \quad \forall n \geq N_2, \theta \in [0, N].
		\]
		
		\subsubsection{Estimate on the Kuratowski noncompactness of measure}
		
		Let \(B_M = \{\chi_n(t_n, \cdot) : n \geq M\} \subset \mathcal{C}([\varepsilon, N], L^p(\mathbb{R}^d))\). We show that when \(M\) is sufficiently large, \(\kappa(B_M)\) can be made arbitrarily small, where \(\kappa\) denotes the Kuratowski noncompactness of measure in \(\mathcal{C}([\varepsilon, N], L^p(\mathbb{R}^d))\).
		
		From Lemma \ref{lemma:4.7}, for any \(\eta > 0\), there exist \(R\) and \(N_1\) such that
		\[
		\sup_{\theta \in [\varepsilon, N]} \|\chi_n(t_n, \theta)\|_{L^p(|x| > R)} < \eta, \quad \forall n \geq N_1.
		\]
		Hence, on the region \(Q_R^c = \{x : |x| > R\}\), we have
		\[
		\kappa_{L^p(Q_R^c)}\left(\{\chi_n(t_n, \theta) : n \geq N_1\}\right) \leq 2\eta.
		\]
		
		On the region \(Q_{2R}\), consider the functions \(\xi_R \chi_n(t_n, \cdot)\). By the statements after Lemma \ref{lemma:4.9}, there exist a finite-dimensional projection \(P_i\) and \(N_2 \geq N_1\) such that
		\[
		\|(I - P_i)(\xi_R \chi_n(t_n, \theta))\|_{L^p(Q_{2R})} < \eta, \quad \forall n \geq N_2, \theta \in [\varepsilon, N].
		\]
		Therefore, in \(L^p(Q_{2R})\),
		\[
		\kappa\left((I - P_i)\{\xi_R \chi_n(t_n, \theta) : n \geq N_2\}\right) \leq 2\eta.
		\]
		Meanwhile, since \(P_i\) has finite rank, the set \(P_i\{\xi_R \chi_n(t_n, \theta) : n \geq N_2\}\) is uniformly bounded in a finite-dimensional space (due to \(L^p\) boundedness) and hence is precompact, i.e.,
		\[
		\kappa\left(P_i\{\xi_R \chi_n(t_n, \theta) : n \geq N_2\}\right) = 0.
		\]
		Consequently,
		\[
		\kappa_{L^p(Q_{2R})}\left(\{\xi_R \chi_n(t_n, \theta) : n \geq N_2\}\right) \leq 2\eta.
		\]
		Since \(\xi_R \equiv 1\) on \(Q_R\), we obtain
		\[
		\kappa_{L^p(Q_R)}\left(\{\chi_n(t_n, \theta) : n \geq N_2\}\right) \leq 2\eta.
		\]
		
		Combining the estimates on the regions \(Q_R\) and \(Q_R^c\), and using the equicontinuity to lift the pointwise estimates to the space of continuous functions, we obtain in \(\mathcal{C}([\varepsilon, N], L^p(\mathbb{R}^d))\),
		\[
		\kappa\left(B_{N_2}\right) \leq 4\eta.
		\]
		Since \(\eta > 0\) is arbitrary, this shows that the set \(B_{N_2}\) is totally bounded and hence relatively compact.

		\subsubsection{ Applying Arzela-Ascoli's theorem}
		
		This implies that the infinite-dimensional Arzela-Ascoli theorem can be applied to every interval of the form $[\varepsilon, N]$, i.e., in the space $\mathcal{C}([\varepsilon, N], L^p(\mathbb{R}^d))$ of continuous functions $f: [\varepsilon, N] \to L^p(\mathbb{R}^d)$. Consequently, there exists a (sub)sequence $t_n \to \infty$ and a function $\chi^* \in \mathcal{C}([\varepsilon, N], L^p(\mathbb{R}^d))$ such that, for each $N \in \mathbb{N}$, uniformly for $\theta \in [\varepsilon, N]$, we have
		$$\chi_n(\theta) := \chi_n(t_n, \theta) \to \chi^*(\theta), \quad t_n \to \infty.$$
		
		Setting $\varepsilon = N^{-1}$ and increasing $N$, using the diagonal subsequence argument, we obtain $\chi^*(\theta)$ defined for all $\theta > 0$.

		\subsubsection{  Continuity of $\chi^*(\theta)$ at $\theta=0$}
		
		It follows from Lemma \ref{lemma:4.2}  and the dissipativity condition that
		$$\|u(t+\theta,u_{0,n})-u(t,u_{0,n}))\| \leq C_R B_{R}^f\theta^{\alpha} .$$
		for all $t\geq 0$ and $\theta \geq 0$. Let $t_n\geq T_R$. Then
		$$\|u(t_n+\theta,u_{0,n})-u(t_n,u_{0,n}))\| \leq C_R B_{R^*}^f\theta^{\alpha} ,$$
		for all $\theta \geq 0$.
		
		For each $\theta > 0$ there is a convergent subsequence (of the subsequence used above to obtain $\chi^*$) such that limits exist and satisfy
		$$\left\| u^{\ast}(\theta)-u^{\ast} \right\|  \leq  C_R B_{R^*}^f\theta^{\alpha},$$
		It is also clear  that
		$$
		\left\| u^{\ast}(\theta)-\chi^{\ast}(\theta) \right\|  \leq  C_R B_{R^*}^f\theta^{\alpha}, \quad \theta \geq 0.$$
		Thus
		$$\begin{aligned} \left\| \chi^{\ast}(\theta)-u^{\ast} \right\|   \leq \left\| \chi^{\ast}(\theta)-u^{\ast}(\theta) \right\|  + \left\| u^{\ast}-u^{\ast}(\theta) \right\| \leq  2C_R B_{R^*}^f\theta^{\alpha} \to 0, \quad \text{as} \; \theta \to 0. \end{aligned}$$
	Consequently, $\chi^{\ast} \in \mathcal{C}_{\alpha}([0,\infty),L^p(\mathbb{{R}}^{d}))$.
 Thus the operator $(T_t \varphi_{u_{0,n}})(\cdot)$ is asymptotically compact on bounded intervals, i.e.,for every sequence  $t_n \rightarrow \infty$ and 	$\|u_{0,n}\| \leq R$  and there is a subsequence $t_n \rightarrow \infty$ such that $\chi_n(t_n, \cdot) = (T_{t_n} \varphi_{u_{0,n}})(\cdot) \rightarrow \chi^*(\cdot) \in \mathcal{C}_\alpha([0, \infty), L^p(\mathbb{R}^{d}))$.

		\subsubsection{Estimates in the weighted norm}
		
		In terms of the weighted norm $\|\cdot\|_\alpha$ on $\mathcal{C}_{\alpha}$, the bound (\ref{bounded1-2}) becomes
		$$\begin{aligned}
			\|\chi(t, \cdot)\|_{\alpha} & = \|\chi_{n}(t, 0)\|  + \sum_{N=1}^{\infty} \frac{1}{2^{N} N^{\alpha}}\|\chi_{n}(t, \theta)\|_{N}, \\
			& \leq R^{*} + \sum_{N=1}^{\infty} \frac{1}{2^{N} N^{\alpha}}\left(C_R B_{R^*}^f N^{\alpha}+R^{*}\right),
		\end{aligned}$$
		for all $t \geq T_{R}$. Hence
		$$\|\chi_{n}(t, \cdot)\|_{\alpha} \leq R^{*}\left(1 + \sum_{N=1}^{\infty} \frac{1}{2^{N} N^{\alpha}}\right) + C_R B_{R^*}^f  \sum_{N=1}^{\infty} \frac{1}{2^{N}} \leq 2 R^{*} + C_R B_{R^*}^f  =: \widehat{R}^{*},$$
		for all $t \geq T_{R}$.

		\subsubsection{Absorbing set and  attractor}
		
		We will now establish the existence of an absorbing set and a global generalized attractor in the space $\mathcal{C}_{\alpha}$, on which $(T_{t}\varphi_{u_{0}})(\cdot)$, $t \geq 0$ forms a semigroup.
		
		Define the closed bounded subset $\mathfrak{B}^{*}$ of $\mathcal{C}_{\alpha}$ by
		$$\mathfrak{B}^{*} := \left\{ \chi \in \mathfrak{C}_{\alpha} : \|\chi\|_{\alpha} \leq 2R^{*} + C_R B_{R^*}^f =: \widehat{R}^{*} \right\}.$$
		This provides an absorbing set for the semigroup $(T_{t}\varphi_{u_{0}})(\cdot)$ in $\mathcal{C}_{\alpha}$, i.e., for $t \geq T_{R}$, it absorbs the bounded set of initial data satisfying $\|\varphi_{u_{0}}\|_{\alpha} \leq 2\|u_{0}\| \leq 2R$. Moreover, the semigroup is asymptotically compact.
		
		Consequently, the set $\mathfrak{A}$ given in Theorem \ref{thm:4.3} is a nonempty, closed, and bounded subset of $\mathfrak{B}^{*}$, and it attracts the semigroup $T_{t}(\cdot)$ for all initial functions $\varphi_{u_{0}}$.
It remains to prove that the set $\mathfrak{A}$ is invariant under the semigroup $T_{t}(\cdot)$. First, let $g \in \mathfrak{A}$. Then there exists a bounded sequence $\{u_{0,n}\}_{n \in \mathbb{N}}$ and $t_{n} \to \infty$ such that $T_{t_{n}}\varphi_{u_{0,n}} \to g$. Let $\tau > 0$ be arbitrary. Then, using the semigroup property and continuity, we have
		$$T_{\tau + t_{n}}\varphi_{u_{0,n}} = T_{\tau}(T_{t_{n}}\varphi_{u_{0,n}}) \to T_{\tau}g,$$
		which implies $T_{\tau}\mathfrak{A} \subset \mathfrak{A}$. Alternatively, let $t_{n} = \tau + s_{n}$. Then
		$$T_{t_{n}}\varphi_{u_{0,n}} = T_{\tau+s_{n}}\varphi_{u_{0,n}} = T_{\tau}(T_{s_{n}}\varphi_{u_{0,n}}) \to T_{\tau}g.$$
		By asymptotic compactness, $T_{s_{n}}\varphi_{u_{0,n}} \to f$ (or a subsequence thereof). Hence, by continuity, $T_{\tau}(T_{s_{n}}\varphi_{u_{0,n}}) \to T_{\tau}f$. But $T_{t_{n}}\varphi_{u_{0,n}} \to g$, so $T_{\tau}f = g$. Therefore, $\mathfrak{A} \subset T_{\tau}\mathfrak{A}$, i.e., $\mathfrak{A} = T_{\tau}\mathfrak{A}$.
		
		This completes the proof of Theorem \ref{thm:4.3}.

	\end{document}